\documentclass{article}
\usepackage{amsmath,amssymb,amsthm}

\title{Homology and orientation reversing periodic maps on surfaces}

\author{Haibin Hang}


\newtheorem{theorem}{Theorem}[section]
\newtheorem{corollary}[theorem]{Corollary}
\newtheorem{lemma}[theorem]{Lemma}
\newtheorem{definition}{Definition}[section]

\newcommand{\Fix}{\operatorname{Fix}}
\newcommand{\lcm}{\operatorname{lcm}}

\begin{document}

\begin{abstract}
In this paper, we give a classification of orientation reversing periodic maps on closed surfaces which generalizes the theory of Nielsen for the orientation preserving periodic maps.

On one hand, we give a collection of data for each orientation reversing periodic map such that two periodic maps with the same data must be conjugate to each other. On the other hand, we give the criterion to judge when two different collections of data correspond to the same conjugacy class.

As an application of the results of this paper, we shall show that a given orientation reversing periodic map on $\Sigma_g$ with period larger than or equal to $3g$ must be conjugate to the power of a list of particular types of periodic maps.
\end{abstract}
\maketitle

\begin{keywords}
homology, orientation reversing, periodic map, branched covering, topological conjugate, Riemann--Hurwitz formula, large period
\end{keywords}

\section{Introduction}\label{introduction}
A natural idea to consider about the conjugacy class of periodic map $f$ with period $n$ on compact surface $\Sigma$ is to consider about its induced branched covering $\pi_f$ from $\Sigma$ to the orbit space $\Sigma/f$. Let $B_f$ be the set of branch points, there is a well-defined homomorphism $\omega_f\colon\thinspace H_1(\Sigma/f-B_f)\rightarrow Z_n$. In a paper of Nielsen~\cite{Nielsen}, he proved that if $f$ is orientation preserving, the conjugacy class of $f$ is determined by the values of $\omega_f$ at the boundary loops of $\Sigma/f-N(B_f)$, where $N(B_f)$ is a tubular neighborhood of $B_f$.

Afterwards, the classification of symmetries on surfaces is encompassed in the study of the structure of NEC groups, which is based on the classification theorem of A.M. Macbeath~\cite{Macbeath0} and his approach to surface automorphisms as explained in his paper~\cite{Macbeath1}. This approach was used by E. Bujalance and D. Singerman~\cite{Bujalance0} to classify period 2 orientation reversing automorphisms on Riemann surfaces. Then in E. Bujalance and A.F. Costa's paper~\cite{Bujalance1}, they characterized the conjugacy classes of orientation reversing periodic automorphisms on Riemann surfaces with period $2p$, here $p$ a prime integer.

Some other facts we know is given by J.J. Etayo~\cite{Etayo} and S.C. Wang~\cite{Wang}: If $g\geq 2$ is even, the largest possible period of an orientation reversing periodic map on $\Sigma_g$ is attainable by $4g+4$; If $g\geq 2$ is odd, the largest possible period of an orientation reversing periodic map on $\Sigma_g$ is attainable by $4g-4$.

In this paper we focus on how to expend Nielsen's idea to orientation reversing periodic maps, in which case $\Sigma/f$ may be nonoreintable. We find out that the conjugacy class of $f$ is determined by the values of $\omega_f$ at a set of one-sided loops on $\Sigma/f-N(B_f)$ besides with the boundary loops of $\Sigma/f-N(B_f)$. From this point of view, we build up a complete theory about orientation reversing periodic maps. Particularly, the corollary \ref{local} would greatly generalize the results in the paper~\cite{Bujalance0} and~\cite{Bujalance1} mentioned above. And in section \ref{large period}, we would give a detailed depict of orientation reversing periodic maps with period $n>2(g-1)$ and give a list of all those with period $n\geq 3g$ up to conjugacy. An interesting result I would like to mention here is corollary \ref{4g-4}: if $g>3$ is odd, all the possible orientation reversing periodic maps with period $n\geq 3g$ are conjugate to a particular map with period $4g-4$.

The author would like to express his gratitude to Professor Xuezhi Zhao, for his guidance during the author's master's degree. Also thanks Professor G. Gromadzki for introducing the results of E. Bujalance and A.F. Costa~\cite{Bujalance1}.
\section{Preliminaries}\label{preliminary}
Along this paper, we use symbols like $\Sigma$, $\Sigma_g$ and $\Sigma_{g,b}$ to represent an oriented compact surface, an oriented closed surface with genus $g$ and an oriented compact surface with genus $g$ and boundary components number $b$. We use symbols like $N_{\tau,b}$ to represent a non-orientable compact surface with genus $\tau$ and boundary components number $b$. And we use $N(\cdot)$ to denote a tubular neighborhood of some subset on the underground surface.

\begin{definition}
A homeomorphism $f$ from a compact surface $\Sigma$ to itself is said to be a periodic map of period $n$ if $f^n=id_{\Sigma}$ and $n$ is the smallest positive integer that satisfies this condition.
\end{definition}

Along this paper, we always assume $f\neq id_{\Sigma}$, which means $n>1$.

\begin{definition}
A point $x$ on $\Sigma$ is called a multiple point of $f$ if there is an integer $0<t<n$ such that $f^t(x)=x$. If $t$ is the smallest possible integer then $t|n$.  We call $t$ the orbit length of $x$ and we call $n/t$ the multiplicity of $x$.
\end{definition}

In the following, we denote the set of points with orbit length $t$ by $W_f^t$. And let $W_f$ be the set of all multiple points of $f$, then $W_f=\sqcup_{t<n}W_f^t$.

It's well-known that $f$ can be realized as an isometry under some constant curvature metric on the surface $\Sigma$. Then locally at each fixed point, $f$ is a rotation or reflection. And we have the following two lemmas:

\begin{lemma}\label{fixed points1}
If $f$ preserves orientation and $\Fix(f)\neq\emptyset$, then each point of $\Fix(f)$ is isolated and $\#\Fix(f)$ is finite.
\end{lemma}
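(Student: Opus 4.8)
The plan is to exploit the metric realization recorded just above: equip $\Sigma$ with a constant curvature metric under which $f$ is an isometry, so that near any fixed point $x$ the map $f$ acts on geodesic normal coordinates as a rotation or a reflection. Since $f$ preserves orientation and a reflection reverses it, $f$ must act near each fixed point as a rotation of the tangent plane $T_x\Sigma$ about $x$ by some angle $\theta_x$; equivalently the differential $df_x$ is a rotation $R_{\theta_x}$, and because $f$ is an isometry its local behavior near $x$ is completely determined by $df_x$ through the exponential map. I would first record this structural fact as the starting point.

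The decisive step is to rule out the degenerate angle $\theta_x \equiv 0 \pmod{2\pi}$. If $df_x$ were the identity at some fixed point $x$, then $f$ would agree with $id_\Sigma$ together with its first derivative at $x$; by the rigidity of isometries on a connected Riemannian manifold, this forces $f = id$ on a whole neighborhood of $x$. Consequently the set $\{y : f = id \text{ near } y\}$ is open, and it is plainly closed; connectedness of $\Sigma$ would then give $f = id_\Sigma$, contradicting our standing assumption $f \neq id_\Sigma$. Hence $\theta_x \not\equiv 0$ at every point of $\Fix(f)$.

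With a nontrivial rotation established at each $x \in \Fix(f)$, isolation is immediate: a rotation by a nonzero angle fixes only the origin of $T_x\Sigma$, and since the exponential map is a local diffeomorphism, the only point near $x$ fixed by $f$ is $x$ itself. Therefore every point of $\Fix(f)$ is isolated. Finally, $\Fix(f)$ is closed in $\Sigma$, being the preimage of the diagonal under $x \mapsto (x, f(x))$, and a closed discrete subset of the compact surface $\Sigma$ must be finite; this yields $\#\Fix(f) < \infty$.

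I expect the main obstacle to be the second paragraph: cleanly justifying that a trivial local rotation propagates to the global identity. This is the one place where a purely local argument does not suffice, and one must invoke the rigidity of isometries together with the connectedness of $\Sigma$. The isolation and finiteness claims are then routine topological consequences of the local rotation picture.
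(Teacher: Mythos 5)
Your proof is correct and takes essentially the same route as the paper: the paper states this lemma as an immediate consequence of the (quoted) fact that $f$ can be realized as an isometry of a constant curvature metric, so that, being orientation preserving, it acts near each fixed point as a rotation, whence isolation and, by compactness, finiteness. The only material you add is the rigidity argument ruling out the trivial rotation angle (which would force $f=id_\Sigma$), a detail the paper leaves implicit in its appeal to the standard local picture.
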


\begin{lemma}\label{fixed points2}
If $f$ reverses orientation and $\Fix(f)\neq\emptyset$, then $\Fix(f)$ is disjoint union of simple closed curves and the period of $f$ must be $2$.
\end{lemma}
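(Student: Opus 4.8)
The plan is to use the isometry realization noted just above: fix a constant-curvature metric for which $f$ is an isometry, and analyze $f$ through its behavior at a fixed point. Choose $x\in\Fix(f)$, which exists by hypothesis. Since $f$ is an isometry fixing $x$, its differential $df_x$ is an orthogonal transformation of the tangent plane $T_x\Sigma\cong\mathbb{R}^2$; because $\Sigma$ is oriented and $f$ reverses orientation, $\det df_x=-1$, so $df_x$ lies in $O(2)\setminus SO(2)$, i.e.\ it is a reflection across a line $\ell\subset T_x\Sigma$. In particular $df_x$ is an involution whose fixed subspace is exactly the one-dimensional subspace $\ell$.

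Next I would read off the local structure of $\Fix(f)$ from this reflection. Since $f$ is an isometry, it commutes with the exponential map, $f(\exp_x v)=\exp_x(df_x v)$ for $v$ in a ball on which $\exp_x$ is a diffeomorphism. Hence a nearby point $\exp_x v$ is fixed precisely when $df_x v=v$, that is, when $v\in\ell$, so $\Fix(f)$ coincides near $x$ with the geodesic arc $\exp_x(\ell)$. This shows $\Fix(f)$ is a properly embedded one-dimensional submanifold of $\Sigma$ without boundary; being also closed (it is the fixed set of a continuous map) and $\Sigma$ compact, it is a compact $1$-manifold, hence by the classification of $1$-manifolds a disjoint union of simple closed curves.

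For the statement on the period, I would differentiate $f^2$ at $x$: since reflections square to the identity, $d(f^2)_x=(df_x)^2=\mathrm{id}_{T_x\Sigma}$. Thus $f^2$ is an isometry of the connected surface $\Sigma$ that fixes $x$ and has trivial derivative there. Invoking the rigidity of isometries---an isometry of a connected Riemannian manifold is determined by its value and differential at a single point---I conclude $f^2=\mathrm{id}_\Sigma$. Since $f\neq\mathrm{id}_\Sigma$ by the standing assumption, the period of $f$ is exactly $2$.

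These two analytic inputs carry the argument, and the step I expect to require the most care is the passage from $d(f^2)_x=\mathrm{id}$ to $f^2=\mathrm{id}$: this is exactly the $1$-jet rigidity of isometries, and it relies on connectedness of $\Sigma$ together with completeness of the metric (automatic here, since $\Sigma$ is compact) so that geodesics emanating from $x$ sweep out all of $\Sigma$. A secondary point to verify is that the local reflection model applies at every fixed point simultaneously, which it does because $f$ is globally orientation-reversing on the oriented surface $\Sigma$, forcing $\det df_x=-1$ at each $x\in\Fix(f)$.
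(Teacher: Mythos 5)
Your proposal is correct and follows exactly the route the paper intends: the paper offers no written proof, simply asserting that $f$ can be realized as an isometry of a constant-curvature metric and is locally a reflection at fixed points, and your argument is the standard fleshing-out of that assertion (exponential-map identification of $\Fix(f)$ with a geodesic through each fixed point, plus $1$-jet rigidity of isometries to get $f^2=\mathrm{id}_\Sigma$). No gaps; the details you flag (connectedness, compactness/completeness for the rigidity step) are handled appropriately.
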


Given a multiple point $x\in W_f^t$, it's clear that $x$ is a fixed point of $n/t$-periodic map $f^t$. Thus we have:

\begin{corollary}\label{multiple points1}
If $f$ preserves orientation and $W_f\neq\emptyset$, then $W_f$ contains only finite number of points.
\end{corollary}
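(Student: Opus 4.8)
The plan is to reduce the finiteness of the whole multiple point set $W_f$ to the finiteness of the various fixed point sets $\Fix(f^t)$, which Lemma \ref{fixed points1} already controls. First I would record the decomposition $W_f=\sqcup_{t<n}W_f^t$ and note that, by the definition of orbit length, every point of $W_f$ has orbit length $t$ dividing $n$ with $0<t<n$; hence the disjoint union effectively ranges only over the divisors $t$ of $n$ that are strictly less than $n$. Since $n$ has only finitely many divisors, this is a finite union, and it therefore suffices to show that each $W_f^t$ is finite.

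Next, using the observation stated just before the corollary, each $x\in W_f^t$ satisfies $f^t(x)=x$, so $W_f^t\subseteq\Fix(f^t)$. Because $t<n$, the map $f^t$ is a genuine periodic map, of period $n/t>1$, and since $f$ preserves orientation every power $f^t$ preserves orientation as well (orientation behaviour is multiplicative under composition). I would then apply Lemma \ref{fixed points1} to $f^t$: if $\Fix(f^t)=\emptyset$ then $W_f^t=\emptyset$, and otherwise Lemma \ref{fixed points1} guarantees that $\Fix(f^t)$ is finite; in either case $W_f^t$ is finite.

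Combining these observations, $W_f$ is a finite union of finite sets and is therefore finite. The argument has essentially no obstacle beyond bookkeeping; the only two points that need a word of care are that orbit lengths are divisors of $n$, so that the index set of the union is finite, and that the orientation-preserving hypothesis on $f$ transfers to each power $f^t$. This latter fact is exactly what licenses the use of Lemma \ref{fixed points1} rather than Lemma \ref{fixed points2}, and it is where the orientation-preserving hypothesis is genuinely used.
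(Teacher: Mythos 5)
Your proof is correct and follows essentially the same route as the paper, which derives the corollary from the single observation that each $x\in W_f^t$ is a fixed point of the orientation preserving $n/t$-periodic map $f^t$, so that Lemma \ref{fixed points1} applies to each of the finitely many sets $W_f^t$. Your write-up simply makes explicit the bookkeeping (divisors of $n$, orientation preservation of powers) that the paper leaves implicit.
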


If $f$ reverses orientation, then $f^t$ preserves (reverses) the orientation when $t$ is even (odd). Thus we have:
\begin{corollary}\label{multiple points2}
Suppose that $f$ reverses orientation. If $t$ is even and $W_f^t\neq\emptyset$, then it
contains only finite number of points; If $t$ is odd and $W_f^t\neq\emptyset$, then it is disjoint union of simple closed curves and the period of $f$ must be $2t$.
\end{corollary}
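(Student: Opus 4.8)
The plan is to reduce both assertions to the two fixed-point lemmas by passing from $f$ to its iterate $f^t$. The key observation, already noted just above the statement, is that every $x\in W_f^t$ is a fixed point of $f^t$, so $W_f^t\subseteq\Fix(f^t)$; moreover, since $0<t<n$ we have $f^t\neq id_{\Sigma}$, and since $t\mid n$ the period of $f^t$ equals $n/t$. The parity of $t$ dictates whether $f^t$ preserves or reverses orientation, which is precisely the dichotomy in the statement.

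For $t$ even, $f^t$ preserves orientation, so Lemma~\ref{fixed points1} applies to $f^t$ and shows that $\Fix(f^t)$ is a finite set of isolated points. Since $W_f^t\subseteq\Fix(f^t)$, it is finite as well, settling the first half immediately.

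For $t$ odd, $f^t$ reverses orientation, so Lemma~\ref{fixed points2} applies: $\Fix(f^t)$ is a disjoint union of simple closed curves and the period of $f^t$ is exactly $2$. As this period equals $n/t$, we obtain $n=2t$, which is the period claim. The remaining point — and to my mind the only genuinely subtle one — is that $W_f^t$ need not equal $\Fix(f^t)$ a priori: a subset of a union of circles need not itself be a union of circles, so the conclusion of Lemma~\ref{fixed points2} cannot simply be inherited. The hard part is therefore to establish that $W_f^t=\Fix(f^t)$.

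To close this gap I would decompose $\Fix(f^t)=\bigsqcup_{s\mid t}W_f^s$ according to orbit length and rule out every proper divisor $s$ of $t$. Because $t$ is odd, each such $s$ is odd, so $f^s$ reverses orientation; if $W_f^s\neq\emptyset$, then Lemma~\ref{fixed points2} applied to $f^s$ forces its period $n/s$ to equal $2$, i.e. $n=2s<2t=n$, a contradiction. Hence $W_f^s=\emptyset$ for every $s\mid t$ with $s<t$, so $\Fix(f^t)=W_f^t$, and the decomposition of $\Fix(f^t)$ into simple closed curves is exactly a decomposition of $W_f^t$. This completes the argument.
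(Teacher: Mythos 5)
Your proof is correct and follows essentially the same route as the paper, which states this corollary without a separate proof, treating it as immediate from Lemma \ref{fixed points1} and Lemma \ref{fixed points2} applied to the iterate $f^t$, whose period is $n/t$ and whose orientation behaviour is governed by the parity of $t$. Your extra step ruling out proper divisors $s$ of $t$ --- so that $W_f^t$ equals all of $\Fix(f^t)$ rather than being a potentially awkward proper subset of a union of circles --- is argued correctly and supplies a detail the paper leaves implicit.
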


\begin{definition}
For two periodic maps $f$ on $\Sigma$ and $f'$ on $\Sigma'$, if there exists an orientation preserving
homeomorphism $h\colon\thinspace \Sigma\rightarrow\Sigma'$ such that $h(W_f)=W_{f'}$ and $f'\circ h=h\circ f$ holds on a nonempty tubular neighborhood $N(W_f\cup\partial\Sigma)$, we say that $f$ and $f'$ are locally conjugate. If furthermore, $f'\circ h=h\circ f$ holds on $\Sigma$, we say that $f$ and $f'$ are conjugate.
\end{definition}

The quotient space $\Sigma/f$ obtained by identifying $x$ with $f(x)$ is also a compact surface. Let $\pi_f\colon\thinspace\Sigma\rightarrow\Sigma/f$ be the quotient map. Then $\pi_f$ is an $n$-fold branched covering ramified at $B_f=\pi_f(W_f)$.

\begin{definition}
For any simple loop $\gamma$ on $\Sigma/f-B_f$, $\pi_f^{-1}(\gamma)$ is disjoint union of simple loops and we call the components number $t$ the orbit length of $\gamma$. Then each component $\tilde{\gamma}\subset\Sigma$ is an $n/t$ covering of $\gamma$ and we call $q=n/t$ the multiplicity of the loop $\gamma$ or $\tilde{\gamma}$. The map $f^t$ restricted on $\tilde{\gamma}$ is conjugate to $e^{i\theta}\mapsto e^{i(\theta+2\pi\frac{p}{q})}$ for some $p$, here $up-vq=1$ for some $u=p^{-1} \mod q$. We call $p/q$ the rotation number of $f^t$. And $[t,q,u]$ is called the the valency of $\gamma$ or $\tilde{\gamma}$.
\end{definition}

We choose a point $x$ in $\Sigma/f-B_f$ and a point $\tilde{x}$ in $\pi_f^{-1}(x)$, then define a homomorphism $\Omega_f\colon\thinspace\pi_1(\Sigma/f-B_f,x)\rightarrow Z_n$ as follows:

For any loop $\gamma\subset\Sigma/f-B_f$ with the base point $x$, let $[\gamma]$ be the element of $\pi_1(\Sigma/f-B_f,x)$ represented by $\gamma$. Let $\tilde{\gamma}$ be the lift of $\gamma$ on $\Sigma$ which begins from $\tilde{x}$. There is a positive integer $r$ less than or equal to $n$ such that the terminal point of $\tilde{\gamma}$ is $f^r(\tilde{x})$. We define $\Omega_f([\gamma])=r\mod n$. Since $Z_n$ is an abelian group, the homomorphism $\Omega_f$ induces a homomorphism $\omega_f\colon\thinspace H_1(\Sigma/f-B_f)\rightarrow Z_n$.

\begin{lemma}\label{index}
For any simple loop $\gamma\subset\Sigma/f-B_f$ with valency $[t,q,u]$, we have $t=\gcd\{\omega_f(\gamma),n\}$, $q=n/t$, $u=\frac{\omega_f(\gamma)}{t}$.
\end{lemma}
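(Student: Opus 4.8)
The plan is to analyze the single component of $\pi_f^{-1}(\gamma)$ containing the basepoint $\tilde x$, exploiting that $\pi_f$ restricted over $\Sigma/f-B_f$ is a regular covering with deck group $Z_n=\langle f\rangle$. Write $r$ for the integer $\omega_f(\gamma)\in Z_n$, so that lifting $\gamma$ from $\tilde x$ produces a path whose terminal point is $f^r(\tilde x)$. First I would pin down which power of $f$ stabilizes the component $\tilde\gamma_0$ of $\pi_f^{-1}(\gamma)$ through $\tilde x$. Since $\gamma$ is connected and the covering is regular, $\langle f\rangle$ permutes the $t$ components of $\pi_f^{-1}(\gamma)$ transitively (each component surjects onto $\gamma$, hence meets the fiber over $x$, on which $Z_n$ acts transitively). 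Therefore the stabilizer of $\tilde\gamma_0$ has index $t$, so it equals $\langle f^t\rangle$, of order $q=n/t$, and $\tilde\gamma_0\to\gamma$ is a $q$-fold cyclic cover.

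Next, to obtain $t=\gcd\{r,n\}$, I would track the first return of $\tilde x$ under repeated lifting. Because $f^r$ carries the lift of $\gamma$ starting at $\tilde x$ to the lift starting at $f^r(\tilde x)$, the successive lifts visit $\tilde x, f^r\tilde x, f^{2r}\tilde x,\dots$, and the component $\tilde\gamma_0$ closes up at the smallest $k>0$ with $f^{kr}(\tilde x)=\tilde x$. As $x\notin B_f$, the $Z_n$-action on the fiber $\pi_f^{-1}(x)$ is free, so this is the smallest $k$ with $kr\equiv 0\pmod n$, namely $k=n/\gcd\{r,n\}$. Since this $k$ is exactly the covering degree $q$, we get $q=n/\gcd\{r,n\}$, and hence $t=\gcd\{r,n\}=\gcd\{\omega_f(\gamma),n\}$ together with $q=n/t$.

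Finally, for $u=r/t$, I would introduce explicit coordinates on the cover: parametrize $\tilde\gamma_0\cong\mathbb{R}/q\mathbb{Z}$ over $\gamma\cong\mathbb{R}/\mathbb{Z}$ with $\tilde x$ at $s=0$, so that traversing $\gamma$ once forward corresponds to the shift $s\mapsto s+1$. Note that $t\mid r$ (as $t=\gcd\{r,n\}$), so $f^r=(f^t)^{r/t}$ lies in $\langle f^t\rangle$ and indeed preserves $\tilde\gamma_0$. By the definition of valency, $f^t$ acts on $\tilde\gamma_0$ as the rotation with rotation number $p/q$, i.e. $s\mapsto s+p\pmod q$; consequently $f^r$ acts as $s\mapsto s+p\cdot(r/t)\pmod q$. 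On the other hand $f^r$ sends $\tilde x$ at $s=0$ to the terminal point of the lift of $\gamma$, at $s=1$. Comparing the two descriptions gives $p\cdot(r/t)\equiv 1\pmod q$, i.e. $r/t\equiv p^{-1}\equiv u\pmod q$, which is the asserted identity $u=\omega_f(\gamma)/t$.

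The step demanding the most care is this last bookkeeping: matching the normalization of the rotation number $p/q$ against the covering parametrization so that ``rotation by $2\pi p/q$'' becomes the shift $s\mapsto s+p$, choosing the orientation of the parameter consistently with the lift, and confirming that $f^r$ genuinely preserves $\tilde\gamma_0$ (which hinges on $t\mid r$). The transitivity of the deck action on components and its freeness on fibers away from $B_f$ are standard features of regular branched coverings, so I expect them to require only brief justification.
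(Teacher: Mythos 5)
Your proof is correct, and at its core it rests on the same mechanism as the paper's: identifying a suitable power of $f$ with a deck translation of the $q$-fold cyclic cover $\tilde\gamma_0\to\gamma$ and matching rotation numbers. The difference is the direction of the computation. The paper argues forward from the valency: since $up-vq=1$, the map $f^{tu}$ has rotation number $1/q$ on $\tilde\gamma$, hence carries $\tilde\gamma(0)$ to $\tilde\gamma(1)$, giving $\omega_f(\gamma)=ut$ outright; the Bézout identity $utp-vn=t$ then yields $t=\gcd\{ut,n\}$ and all three formulas at once. You argue backward from $r=\omega_f(\gamma)$: you first recover $t=\gcd\{r,n\}$ by counting the $\langle f^r\rangle$-orbit of $\tilde x$ in the fiber (using freeness of the deck action away from $B_f$), and only then solve the congruence $p\cdot(r/t)\equiv 1\pmod q$ for $u$. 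Your version is longer but more self-contained: it makes explicit the covering-theoretic facts (regularity, free transitive action on fibers, transitivity on components) that the paper's definition of valency quietly presupposes, and it never needs to guess in advance which power of $f$ realizes the endpoint of the lift. The paper's version buys brevity: one rotation-number computation plus Bézout. Both treatments rely on the same implicit normalization (rotation by $2\pi p/q$ equals the shift $s\mapsto s+p$ in the covering parameter, taken compatibly with the lift's orientation), which you correctly flag as the one delicate point.
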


\begin{proof}
	Since $f^t$ restricted on $\tilde{\gamma}$ has rotation number $p/q$ and $up-vq=1$, we have $utp-vn=t$ and $t=\gcd\{ut,n\}$. Then $f^{tu}$ restricted on $\tilde{\gamma}$ has rotation number $1/q$ and maps $\tilde{\gamma}(0)$ to $\tilde{\gamma}(1)$. Hence $\omega_f(\gamma)=ut$ and the statement follows.
\end{proof}

In the following sections we would show how a collection of data derived from homomorphism $\omega_f$ can determine the conjugacy class of $f$.

\section{The conjugacy class of orientation preserving periodic maps}\label{orientable}
In this section we look at the case when $f$ preserves orientation of a bounded oriented surface $\Sigma$. In this case, $\Sigma/f$ is also a bounded surface with induced orientation from $\Sigma$.

By corollary \ref{multiple points1}, $B_f=\pi_f(W_f)$ is finit. We let  $B_f=\{q_1,q_2,\cdots,q_b\}$ and let $\partial(\Sigma/f)=\sqcup_{j=1}^c\delta_j$. Then $\Sigma/f-N(B_f)$ has $b+c$ boundary loops, each of which has induced orientation. We use $S_{q_i}$ to denote each boundary loop around $q_i$.
\begin{definition}
	For each branch point $q_i$, we refer the valency of $q_i$ to be the valency of the simple loop $S_{q_i}$ around $q_i$.
\end{definition}

\begin{lemma}\label{valency}
Given two orientation preserving periodic maps $f$ on $\Sigma$ and $f'$ on $\Sigma'$. If $B_f\cup\partial(\Sigma/f)\neq\emptyset$ and there is an orientation preserving homeomorphism $\bar h\colon\thinspace\Sigma/f\rightarrow\Sigma'/f'$, such that
 \begin{enumerate}
  \item $\bar h(B_f)=B_{f'}$
  \item for each $q_i\in B_f$, $q_i$ and $\bar h(q_i)$ have the same valency
  \item for each $\delta_j\subset\partial(\Sigma/f)$, $\delta_j$ and $\bar h(\delta_j)$ have the same valency
 \end{enumerate}
Then $f$ and $f'$ are locally conjugate.
\end{lemma}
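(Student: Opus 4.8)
The plan is to reduce the statement to a purely local construction over the branch points and boundary loops, where the branched covering $\pi_f$ is governed entirely by the valency data, and then to extend the resulting homeomorphism arbitrarily over the complement. Concretely, I would construct the desired $h\colon\Sigma\to\Sigma'$ so that it covers $\bar h$ over a tubular neighborhood of $B_f\cup\partial(\Sigma/f)$ and satisfies $f'\circ h=h\circ f$ there; since $W_f=\pi_f^{-1}(B_f)$ and $\bar h(B_f)=B_{f'}$, this automatically yields $h(W_f)=W_{f'}$, which is exactly what local conjugacy demands. Recall that $W_f$ is finite by Corollary~\ref{multiple points1}, so its tubular neighborhood is a finite union of disks.

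First I would record the local model of $\pi_f$ near a branch point $q_i$ of valency $[t,q,u]$. Choosing a small disk $U_i\subset\Sigma/f$ around $q_i$ with $U_i\cap B_f=\{q_i\}$, the preimage $\pi_f^{-1}(U_i)$ is a disjoint union of $t$ disks $\tilde D_1,\dots,\tilde D_t$ that $f$ permutes cyclically, and $f^{t}$ restricted to each $\tilde D_i$ is conjugate to a rotation whose rotation number $p/q$ is determined by $u$ and $q$ through Lemma~\ref{index}. Hence the configuration $(\pi_f^{-1}(U_i),f)$ is determined up to conjugacy by $[t,q,u]$. Next, over each $q_i$ I would build the local conjugation. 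Shrinking so that $\bar h(U_i)$ is a disk around $\bar h(q_i)$, hypothesis (2) gives that $\bar h(q_i)$ has the same valency $[t,q,u]$, so its model consists of $t$ disks $\tilde D_1',\dots,\tilde D_t'$ with the analogous cyclic action. I would first define $h$ on $\tilde D_1$ as an orientation-preserving lift of $\bar h|_{U_i}$ to $\tilde D_1'$, chosen to be equivariant for the two rotation actions of $f^t$ and $(f')^{t}$; this is possible precisely because the rotation numbers agree, which is guaranteed by the equality of $u$. I would then propagate the definition by the rule $h|_{\tilde D_{i+1}}=f'\circ h|_{\tilde D_i}\circ f^{-1}$. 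The point to verify is that after going once around the cycle the prescription closes up, i.e.\ that $(f')^{t}\circ h=h\circ f^{t}$ on $\tilde D_1$; this is exactly the equivariance arranged in the first step. By construction $f'\circ h=h\circ f$ on $\pi_f^{-1}(U_i)$.

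The boundary loops are handled in the same way, replacing disks by collar annuli and using hypothesis (3). Since the $U_i$ and the collars can be chosen pairwise disjoint, these local homeomorphisms assemble into a single orientation-preserving $h$ defined on the tubular neighborhood $N=\pi_f^{-1}\bigl(N(B_f)\cup N(\partial(\Sigma/f))\bigr)$ of $W_f\cup\partial\Sigma$, with $f'\circ h=h\circ f$ on $N$. It remains to extend $h$ over $\Sigma\setminus\operatorname{int}(N)$ as an orientation-preserving homeomorphism onto $\Sigma'\setminus\operatorname{int}(N')$, where $N'=h(N)$, with no equivariance required. The two complements are compact oriented surfaces; because $\bar h$ is a homeomorphism of the quotients and all valencies match, the Riemann--Hurwitz formula shows $\Sigma$ and $\Sigma'$ have equal Euler characteristic, while the matching boundary valencies force equal numbers of boundary components, so the complements are homeomorphic. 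The homeomorphism already prescribed on $\partial N$ is orientation-preserving and respects the pairing of components induced by $\bar h$, and such a boundary homeomorphism between homeomorphic compact surfaces extends; any such extension completes the construction.

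The main obstacle I anticipate is the closing-up check in the second step: verifying that the cyclically propagated definition of $h$ is single-valued and orientation-preserving, which is where the coincidence of the valency data $[t,q,u]$ — in particular the rotation parameter $u$ — is indispensable, for without it the two local rotation actions would not be conjugate by an orientation-preserving map. The extension over the complement, by contrast, is soft and relies only on the classification of surfaces together with the Euler-characteristic count from Riemann--Hurwitz.
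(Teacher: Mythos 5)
The paper states Lemma~\ref{valency} without any proof at all --- it is treated as a standard fact (it underlies Nielsen's setup), so there is no proof of record to compare yours against; your proposal fills a gap the paper leaves implicit, and it is correct. The architecture --- local models over branch points and boundary collars governed by the valency triples, cyclic propagation $h|_{\tilde D_{i+1}}=f'\circ h|_{\tilde D_i}\circ f^{-1}$ with the closing-up condition controlled by the parameter $u$, then a soft, non-equivariant extension over the complement via the classification of compact oriented surfaces --- is exactly the standard argument. Two points are worth tightening. First, the equivariance of the initial lift on $\tilde D_1$ is not something you need to (or can) arrange by choice: \emph{any} lift $h_1$ of $\bar h|_{U_i}$ satisfies $h_1\circ\delta=\delta'\circ h_1$, where $\delta\mapsto\delta'$ is the correspondence of deck transformations induced by $\bar h_*$ on $\pi_1(U_i-q_i)$; because $\bar h$ is orientation preserving, it matches the monodromies of the positively oriented meridians, i.e.\ the rotations by $1/q$, and equality of the $u$'s then gives $h_1\circ f^t=(f')^t\circ h_1$ automatically (and, as you correctly anticipate, this is precisely what fails when the $u$'s differ, since the defect is independent of the choice of lift). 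Second, the statement nowhere assumes that $f$ and $f'$ have the same period; you should note explicitly that equal periods follow from the hypothesis $B_f\cup\partial(\Sigma/f)\neq\emptyset$ together with the matching of valencies (any valency $[t,q,u]$ determines $n=tq$), since your Riemann--Hurwitz count of Euler characteristics, and indeed the whole construction, presupposes it. Finally, in the extension step it is worth remarking that both complements are connected (removing disks and collars from a connected surface keeps it connected), so the genus-plus-boundary-count classification applies directly; with these remarks your argument is complete.
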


\begin{theorem}\label{Nielsen11}\cite{Nielsen}
If two homeomorphic compact oriented surfaces undergo periodic, orientation preserving transformations $f$ and $f'$, respectively, of the same period, then $f$ and $f'$ are conjugate if and only if they are locally conjugate.
\end{theorem}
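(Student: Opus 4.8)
The \emph{only if} direction is immediate from the definitions: if $f'\circ h=h\circ f$ holds on all of $\Sigma$ for some orientation preserving homeomorphism $h$ with $h(W_f)=W_{f'}$, then it holds a fortiori on a tubular neighborhood $N(W_f\cup\partial\Sigma)$, so $f$ and $f'$ are locally conjugate. Thus the entire content is the converse, and I assume from now on that $f$ and $f'$ are locally conjugate via an orientation preserving homeomorphism $h_0\colon\thinspace\Sigma\rightarrow\Sigma'$. The plan is to descend $h_0$ to the orbit spaces, extend the resulting boundary homeomorphism over all of $\Sigma/f$ in a way that preserves the homological data carried by $\omega_f$, and then lift the extension back to a $Z_n$-equivariant homeomorphism, which is exactly a global conjugacy.

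First I would descend. On $N=N(W_f\cup\partial\Sigma)$ the map $h_0$ is equivariant, so it passes to the quotients and yields a homeomorphism $\bar h_0$ from a neighborhood of $B_f\cup\partial(\Sigma/f)$ onto a neighborhood of $B_{f'}\cup\partial(\Sigma'/f')$; in particular it carries each boundary loop $S_{q_i}$ and $\delta_j$ of $M:=\Sigma/f-N(B_f)$ to the corresponding loop of $M':=\Sigma'/f'-N(B_{f'})$. By Lemma \ref{valency} and Lemma \ref{index}, the valencies of these loops agree, which is to say $\bar h_0$ preserves the values of $\omega_f$ on every boundary loop. Since $h_0$ is a homeomorphism we have $\chi(\Sigma)=\chi(\Sigma')$, and since the branch data match, the Riemann--Hurwitz formula forces $\chi(\Sigma/f)=\chi(\Sigma'/f')$; both quotients are oriented with matching numbers of boundary components and of branch points, so $\Sigma/f\cong\Sigma'/f'$ and $M\cong M'$.

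Next I would extend $\bar h_0|_{\partial M}$ to a homeomorphism $\bar H_1\colon\thinspace M\rightarrow M'$ by the classification of surfaces, taking $\bar H_1=\bar h_0$ on a collar of $\partial M$. Now $\omega_f$ and $\omega_{f'}\circ(\bar H_1)_*$ are two homomorphisms $H_1(M)\rightarrow Z_n$ that take the same value on each boundary loop, and both are \emph{surjective}, because $\Sigma$ and $\Sigma'$ are connected and hence their $Z_n$-covers over $M$ and $M'$ are connected. The key lemma I must establish is that the orientation preserving mapping class group of $M$ rel $\partial M$ acts transitively on surjective homomorphisms $H_1(M)\rightarrow Z_n$ having prescribed boundary values; this rests on the fact that $H_1(M)$ modulo its boundary classes carries a symplectic intersection form whose mod-$n$ symplectic group is realized by homeomorphisms, which then act transitively on the relevant unimodular data. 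Granting it, I choose $\psi\colon\thinspace M\rightarrow M$, the identity near $\partial M$, with $\omega_{f'}\circ(\bar H_1)_*\circ\psi_*=\omega_f$, and set $\bar H=\bar H_1\circ\psi$, so that $\bar H=\bar h_0$ near $\partial M$ and $\omega_{f'}\circ\bar H_*=\omega_f$. Extending $\bar H$ across $N(B_f)$ by $\bar h_0$ produces $\bar H\colon\thinspace\Sigma/f\rightarrow\Sigma'/f'$.

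Finally I would lift. Over $M$ the identity $\omega_{f'}\circ\bar H_*=\omega_f$ says that $\bar H$ pulls the $Z_n$-cover $\pi_{f'}^{-1}(M')\rightarrow M'$ back to $\pi_f^{-1}(M)\rightarrow M$, so $\bar H|_M$ lifts to a $Z_n$-equivariant homeomorphism of covers, and I use the deck-group ambiguity to make the lift agree with $h_0$ on a chosen boundary component. Over each $N(B_f)$ the map $h_0$ is already an equivariant homeomorphism with the correct valency, and gluing the two along the boundary circles of $M$ yields a global equivariant homeomorphism $H\colon\thinspace\Sigma\rightarrow\Sigma'$ with $f'\circ H=H\circ f$. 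I expect the two genuine obstacles to be the transitivity statement invoked in the extension step and the compatibility of the interior and collar lifts along the boundary circles in this last gluing: the deck freedom must be spent consistently so that the matched rotation numbers make the two lifts coincide on every overlap simultaneously.
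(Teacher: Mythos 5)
First, a point of reference: the paper does not prove this statement at all --- Theorem \ref{Nielsen11} is quoted from Nielsen's 1937 paper with only the citation \cite{Nielsen} attached --- so there is no internal proof to compare against, and your proposal must stand on its own as a proof of Nielsen's theorem. Your overall strategy (descend the local conjugacy to the orbit spaces, extend over $\Sigma/f$ while controlling $\omega_f$, then lift equivariantly) is the standard modern route to this result, and several of its steps are correctly in place: the descent of $h_0$ and matching of valencies via Lemma \ref{index}, the Riemann--Hurwitz identification $\Sigma/f\cong\Sigma'/f'$, the surjectivity of $\omega_f$ coming from connectedness of $\Sigma-N(W_f)$, and the lifting criterion $\omega_{f'}\circ\bar H_*=\omega_f$. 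Your second announced obstacle, the gluing of the interior lift with $h_0$ over $N(B_f)$, is in fact not a real one: the cleaner move is to discard $h_0$ over the branch disks entirely and extend the equivariant homeomorphism of $\pi_f^{-1}(M)$ radially and equivariantly over the disk preimages, after which no compatibility problem arises.

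The genuine gap is the transitivity lemma, which you state, do not prove, and --- more seriously --- support with a justification that is wrong as sketched. You claim it ``rests on'' the symplectic intersection form on $H_1(M)$ modulo its boundary classes, whose mod-$n$ symplectic group acts ``transitively on the relevant unimodular data.'' But the boundary classes span the radical of the intersection form, and the handle values of an epimorphism with prescribed boundary values need \emph{not} be unimodular mod $n$; they need only generate $Z_n$ together with the boundary values, and the quotient symplectic action is not transitive on such data. Concretely, let $M$ have genus $1$ and two boundary components, $n=6$, boundary values $(2,-2)$, and consider the two epimorphisms sending a symplectic basis $(a,b)$ to $(3,0)$ and to $(1,0)$ respectively: both are surjective, both agree on the boundary (so they arise from locally conjugate periodic maps, i.e.\ this is exactly a case the theorem must handle), yet no element of $SL(2,Z_6)$ carries $(3,0)$ to $(1,0)$, since divisibility of both coordinates by $3$ is preserved. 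Transitivity does hold for the mapping class group rel $\partial M$, but only because that group also contains Dehn twists along curves whose homology classes involve boundary classes (e.g.\ a curve homologous to $a+d_1$), which translate handle values by multiples of the boundary values; the proof then goes by reducing modulo $\gcd\{\theta,n\}$, applying $SL$-transitivity there, and cleaning up with these translations, handle by handle. Since the whole theorem reduces to precisely this lemma, leaving it unproved and misattributed to pure symplectic transitivity is a substantive gap, not a routine omission.
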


Let $h=genus(\Sigma/f)$, $\theta_i=\omega_f(S_{q_i})$ and $\eta_j=\omega_f(\delta_j)$. By lemma \ref{index}, lemma \ref{valency} and theorem \ref{Nielsen11}, the data $[h,n;(\theta_1,\cdots,\theta_b);(\eta_1,\cdots,\eta_c)]$ determines a periodic map up to conjugacy. The Hurwitz realization problem for cyclic branched covering gives us the following sufficient and necessary condition for the data $[h,n;(\theta_1,\cdots,\theta_b);(\eta_1,\cdots,\eta_c)]$ to correspond to an orientation preserving periodic map.

\begin{theorem}\label{Nielsen12}
There is an orientation preserving periodic map of period $n$ on compact orientable surface of genus $\tau$ with the data $[h,n;(\theta_1,\cdots,\theta_b);(\eta_1,\cdots,\eta_c)]$ if and only if the following conditions are satisfied:
 \begin{enumerate}
 \item $\theta_i\neq 0\mod n$, $i=1,\cdots,b$
 \item $\theta_1+\cdots+\theta_b+\eta_1+\cdots+\eta_c=0\mod n$
 \item If $h=0$, then $\gcd\{\theta_1,\cdots,\theta_b,\eta_1,\cdots,\eta_c\}=1 \mod n$
 \item $2\tau-2=n\left(2h-2+\sum_{i=1}^b\left(1-\frac{\gcd\{\theta_i,n\}}{n}\right)+\sum_{j=1}^c\left(1-\frac{\gcd\{\eta_j,n\}}{n}\right)\right)$
 \end{enumerate}
\end{theorem}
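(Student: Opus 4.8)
The plan is to recognize this statement as the Hurwitz realization theorem for cyclic branched covers and to prove the two implications separately, using the dictionary between period-$n$ orientation preserving maps on $\Sigma$ and surjective homomorphisms $\omega\colon\thinspace H_1(\Sigma/f-N(B_f))\rightarrow Z_n$ with prescribed values on the boundary loops. Here $\Sigma/f-N(B_f)$ is a genus $h$ surface with $b+c$ boundary components, so $H_1$ is free abelian of rank $2h+b+c-1$, spanned by the standard genus generators $a_1,b_1,\dots,a_h,b_h$ together with the boundary classes $S_{q_1},\dots,S_{q_b},\delta_1,\dots,\delta_c$, subject to the single relation $\sum_i S_{q_i}+\sum_j\delta_j=0$.

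For necessity I would assume $f$ exists and read off the four conditions. Condition 1 is immediate from lemma \ref{index}: a genuine branch point $q_i$ has multiplicity $q=n/\gcd\{\theta_i,n\}>1$, forcing $\gcd\{\theta_i,n\}<n$ and hence $\theta_i\neq 0\bmod n$. Condition 2 follows by applying $\omega_f$ to the surface relation $\sum_i S_{q_i}+\sum_j\delta_j=0$. Condition 3 records that a connected total space with effective period exactly $n$ requires $\omega_f$ to be surjective; when $h=0$ there are no generators $a_i,b_i$, so the image is exactly $\langle\theta_1,\dots,\theta_b,\eta_1,\dots,\eta_c\rangle$ and surjectivity is equivalent to the stated gcd being $1$. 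Condition 4 is the Riemann--Hurwitz formula: over the unbranched part one has $\chi(\Sigma-W_f)=n\,\chi(\Sigma/f-B_f)$, and restoring the $\sum_i\gcd\{\theta_i,n\}$ points of $W_f$, the $\sum_j\gcd\{\eta_j,n\}$ boundary circles lying over the $\delta_j$, and the $b$ branch points downstairs, one rearranges the Euler characteristic identity into the displayed equality.

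For sufficiency, starting from data satisfying 1--4, I would first build the required homomorphism. Assign $\omega(S_{q_i})=\theta_i$ and $\omega(\delta_j)=\eta_j$; condition 2 makes these values compatible with the defining relation, so they extend to a homomorphism on $H_1$, while the values on $a_i,b_i$ remain free. If $h\geq 1$ I would set some $\omega(a_i)$ to a generator of $Z_n$ to force surjectivity, whereas if $h=0$ surjectivity is supplied directly by condition 3. The surjective $\omega$ determines a connected regular $Z_n$-cover $\widetilde{X}\rightarrow\Sigma/f-N(B_f)$ on which the deck group acts freely and orientation preservingly. I would then cap off: each artificial boundary circle $S_{q_i}$ lifts to $\gcd\{\theta_i,n\}$ circles, and gluing in disks carrying the rotation prescribed by the valency $[\,t,q,u\,]$ of lemma \ref{index} extends the action over new fixed points, leaving the lifts of the genuine boundary $\delta_j$ as $\partial\Sigma$. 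This produces $\Sigma$ together with a period-$n$ orientation preserving $f$ whose branch data is exactly $[h,n;(\theta_i);(\eta_j)]$, condition 1 guaranteeing these are honest branch points, and the Euler characteristic count of condition 4 certifying that the genus of $\Sigma$ equals $\tau$.

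The main obstacle I anticipate lies in the sufficiency construction rather than the bookkeeping: one must verify that the local model glued over each $S_{q_i}$ realizes the prescribed valency and fits together equivariantly with the free action on $\widetilde{X}$, and that the closed-up surface is connected with period exactly $n$ (as opposed to a disconnected cover or a map of smaller period), which is precisely what surjectivity of $\omega$ secures. Once the surface is assembled, matching its genus to $\tau$ reduces to the Riemann--Hurwitz computation already recorded in condition 4, so no further difficulty arises there.
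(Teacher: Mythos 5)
Your proposal is correct, but there is nothing in the paper to compare it against line by line: the paper does not prove Theorem \ref{Nielsen12} at all. It states the theorem as a known result, attributing it to ``the Hurwitz realization problem for cyclic branched covering'' (in the tradition of Nielsen and Harvey), and then simply uses it. What you have written is precisely the standard argument behind that citation, and it is sound: necessity follows by evaluating $\omega_f$ on the single boundary relation in $H_1(\Sigma/f-N(B_f))$ (condition 2), from Lemma \ref{index} and the fact that genuine branch points have multiplicity greater than $1$ (condition 1), from surjectivity of the monodromy of a connected regular $Z_n$-cover, which reduces to the gcd condition exactly when $h=0$ because then the boundary classes generate $H_1$ (condition 3), and from Riemann--Hurwitz (condition 4); sufficiency follows by building a homomorphism $\omega$ with the prescribed boundary values (well defined by condition 2, surjective by a handle generator when $h\geq 1$ or by condition 3 when $h=0$), pulling back the associated connected regular $Z_n$-cover, and equivariantly capping the $\gcd\{\theta_i,n\}$ circles over each $S_{q_i}$ by coned rotations, with surjectivity guaranteeing connectedness and period exactly $n$, and condition 4 certifying the genus is $\tau$. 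Two small points to tighten in a full write-up: the relation $\sum_i S_{q_i}+\sum_j\delta_j=0$ and your rank count $2h+b+c-1$ presuppose $b+c\geq 1$ (the degenerate case $b+c=0$ must be handled separately, and is consistent with condition 3 under the convention that the gcd of the empty set is $0$); and the equivariant extension over the capping disks, which you correctly flag as the one step needing verification, should be stated as the routine fact that a finite-order homeomorphism of a disjoint union of circles extends by coning to the disks they bound, compatibly with the cyclic permutation of those circles.
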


In particular, for orientation preserving periodic map $f$ on a closed surface $\Sigma_g$, we have $c=0$. Then
\begin{definition}
For each $q_i\in B_f$, let $[t_i,q_i,u_i]$ be its valency, then the expression $V=(n,u_1/q_1+\cdots+u_b/q_b)$ is called the total valency of $f$. As inferred from the above theorem, $u_1/q_1+\cdots+u_b/q_b$ is an integer.
\end{definition}

\begin{corollary}\label{m}
Suppose that $f$ has total valency $V=(n,u_1/q_1+\cdots+u_b/q_b)$ and $\Sigma_g/f$ is a sphere, then for each $1\leq i\leq b$, $\lcm\{q_1,\cdots,q_{i-1},\hat{q_i},q_{i+1},\cdots,q_b\}=n$, here $\hat{q_i}$ means $q_i$ is omitted.
\end{corollary}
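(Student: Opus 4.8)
The plan is to translate the lcm condition into a statement about orders of elements in the cyclic group $Z_n$ and then read it off from the relations satisfied by the data. First I would recall, via Lemma \ref{index}, that if I set $\theta_j = \omega_f(S_{q_j})$ then $q_j = n/\gcd\{\theta_j,n\}$, which is exactly the order of $\theta_j$ as an element of $Z_n$. Thus the quantity $\lcm\{q_1,\ldots,\hat{q_i},\ldots,q_b\}$ is the lcm of the orders of the elements $\{\theta_j : j\neq i\}$, and the whole statement becomes a purely group-theoretic assertion about these orders.

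The key algebraic fact I would establish (or invoke) is that in a cyclic group $Z_n$ the lcm of the orders of any finite collection $a_1,\ldots,a_k$ equals the order of the subgroup they generate, namely $n/\gcd\{a_1,\ldots,a_k,n\}$. One inclusion is immediate, since each element lies in that subgroup and so its order divides the subgroup's order; for the reverse I would argue prime by prime, noting that if $p^e$ exactly divides the order of the generated subgroup, then projecting onto the $p$-primary part forces some $a_j$ to have order divisible by $p^e$. Granting this, it suffices to prove that $\gcd\{\theta_1,\ldots,\hat{\theta_i},\ldots,\theta_b,n\}=1$, i.e.\ that the elements $\{\theta_j : j\neq i\}$ generate all of $Z_n$.

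Here is where the defining relations of the data enter. Since $\Sigma_g/f$ is a sphere, $H_1(\Sigma_g/f - B_f)$ is generated by the loops $S_{q_j}$ subject only to $\sum_j S_{q_j}=0$; applying $\omega_f$ and using condition 2 of Theorem \ref{Nielsen12} (with $c=0$) gives $\theta_i = -\sum_{j\neq i}\theta_j \bmod n$. Hence $\theta_i$ already lies in the subgroup generated by the remaining $\theta_j$, so $\langle \theta_j : j\neq i\rangle = \langle \theta_1,\ldots,\theta_b\rangle$. Condition 3 of Theorem \ref{Nielsen12} (again with $h=0$, $c=0$) asserts precisely that the full collection generates $Z_n$, and therefore so does the collection with $\theta_i$ deleted. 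Combining this with the cyclic-group lemma of the previous paragraph yields $\lcm\{q_1,\ldots,\hat{q_i},\ldots,q_b\}=n$.

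I expect the only genuine content to be the cyclic-group lemma relating the lcm of orders to the order of the generated subgroup; the rest is bookkeeping with the relations $\sum_j\theta_j\equiv 0$ and $\gcd\{\theta_1,\ldots,\theta_b,n\}=1$. The one point to watch is that deleting a \emph{single} index is harmless exactly because of the sum relation: it lets me rewrite $\theta_i$ in terms of the others without changing the generated subgroup. Deleting two or more indices would only control their sum, not each summand, and the conclusion could genuinely fail, so the proof really uses that precisely one branch loop is omitted.
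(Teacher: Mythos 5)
Your proof is correct, and it uses the same two inputs as the paper --- conditions 2 and 3 of Theorem \ref{Nielsen12} together with Lemma \ref{index} --- but packages the arithmetic differently. The paper first proves $\lcm\{q_1,\cdots,q_b\}=n$ from condition 3 (essentially as you do), and then treats the omitted index by a fraction argument: since $\sum_j u_j/q_j$ is an integer by condition 2, $u_i/q_i$ can be rewritten as a fraction with denominator $\lcm\{q_1,\cdots,\hat{q_i},\cdots,q_b\}$, and $\gcd\{u_i,q_i\}=1$ then forces $q_i\mid\lcm\{q_1,\cdots,\hat{q_i},\cdots,q_b\}$, so the omitted lcm equals the full lcm, which is $n$. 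You instead pass to subgroup language: your cyclic-group lemma (lcm of orders equals the order of the generated subgroup, namely $n/\gcd\{a_1,\cdots,a_k,n\}$) converts the desired lcm statement into the assertion that $\{\theta_j : j\neq i\}$ generates $Z_n$, and the sum relation makes $\theta_i$ a redundant generator. The two mechanisms are equivalent --- in $Z_n$ an element lies in a subgroup exactly when its order divides the subgroup's order, so your ``$\theta_i$ is redundant'' is the same statement as the paper's ``$q_i$ divides the lcm of the others'' --- hence this is a repackaging rather than a different idea. What your version buys is a clean, reusable gcd/lcm duality lemma and a proof whose logic is visibly group-theoretic; what the paper's version buys is a shorter, self-contained computation that never states a general lemma. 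Your closing observation, that omitting a single index is harmless precisely because of the sum relation while omitting two or more could fail, matches the role condition 2 plays in the paper's argument as well.
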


\begin{proof}
By theorem \ref{Nielsen12}, $\gcd\{\theta_1,\cdots,\theta_b\}=1\mod n$. By lemma \ref{index}, we have $\theta_i=u_i\frac{n}{q_i}$ and $\gcd\{u_1\frac{n}{q_1},\cdots,u_b\frac{n}{q_b}\}=1\mod n$, which implies $\gcd\{\frac{n}{q_1},\cdots,\frac{n}{q_b}\}=1$. Thus $\lcm\{q_1,\cdots,q_b\}=n$.

By theorem \ref{Nielsen12}, $\theta_1+\cdots+\theta_b=0\mod n$, then $\Sigma_{i=1}^b\frac{u_i}{q_i}=\Sigma_{i=1}^b\frac{\theta_i}{n}$ is an integer. Then for each $i$, $\frac{u_i}{q_i}=\frac{r}{\lcm\{q_1,\cdots,q_{i-1},\hat{q_i},q_{i+1},\cdots,q_b\}}$, for some integer $r$. From $\gcd\{u_i,q_i\}=1$, we have $q_i|\lcm\{q_1,\cdots,q_{i-1},\hat{q_i},q_{i+1},\cdots,q_b\}$. Then the conclusion follows.
\end{proof}

\section{The data for orientation reversing periodic maps}\label{new data}
In the following sections we shall consider orientation reversing periodic map $f$ of period $n$ on closed surface $\Sigma_g$. Since $f$ is orientation reversing, the period $n$ must be even. We suppose that $n=2m$.

Similar to the method for the orientation preserving case, we intend to find a collection of data derived from the map $\omega_f\colon\thinspace H_1(\Sigma_g/f-B_f)\rightarrow Z_n$ to depict the conjugacy class of $f$.

Since $f^2$ preserves the orientation, the quotient map $\pi_{f^2}\colon\thinspace\Sigma_g\rightarrow\Sigma_g/f^2$ which identifies $x$ with $f^2(x)$ on $\Sigma_g$ gives us a closed oriented quotient surface. And $f$ induces a 2-periodic orientation reversing map
$\bar f\colon\thinspace\Sigma_g/f^2 \rightarrow \Sigma_g/f^2$ such that $\bar f\circ\pi_{f^2}=\pi_{f^2}\circ f$. Then we have a quotient map $\pi_{\bar f}\colon\thinspace\Sigma_g/f^2\rightarrow\Sigma_g/f$. In particular, $\pi_{\bar f}\circ\pi_{f^2}=\pi_f$.

Following lemma \ref{fixed points2}, we suppose that $\Fix(\bar f)$ (if not empty) is disjoint union of $k$ simple closed curves $\sigma_1,\cdots,\sigma_k$. Let $\delta_j=\pi_{\bar f}(\sigma_j)$, we have $\partial(\Sigma_g/f)=\sqcup_{j=1}^k \delta_j$.

Following corollary \ref{multiple points2}, we suppose the isolated branch points of $\pi_{f}$ to be $\{q_1,\cdots,q_b\}$, which is exactly $\pi_{\bar f}(B_{f^2})$.

Then $B_f=\pi_{\bar f}(\Fix(\bar f))\sqcup \pi_{\bar f}(B_{f^2})=\sqcup_{j=1}^k\delta_j\sqcup\{q_1,\cdots,q_b\}$. We denote the boundary components of $\Sigma_g/f-N(B_f)$ by $S_{q_i}$, $i=1,\cdots,b$ and $S_{\delta_j}$, $j=1,\cdots,k$.

As what we did for orientation preserving periodic maps, we can look at the value of $\omega_f$ at each $S_{q_i}$ and $S_{\delta_j}$. But the problem is that $\Sigma_g/f$ may be non-orientable, which means we do not have well-defined orientations for $S_{q_i}$ and $S_{\delta_j}$. Correspondingly, we can not decide the signs of $\omega_f(S_{q_i})$ and $\omega_f(S_{\delta_j})$ in $Z_n$.


But we can cut $\Sigma_g/f$ along a sequence of disjoint one-sided simple closed curves $\iota_l$, $l=1,\cdots,s$ (the tubular neighborhood of each
 $\iota_l$ is homeomorphic to M\"{o}bius band), such that the result surface, denoted by $\Sigma_{h,k+s}$ is orientable. We can give $\Sigma_{h,k+s}$ an arbitrary orientation, which induces orientation on each of the boundary components of $\Sigma_{h,k+s}-N(B_f)$. Now $\Sigma_g/f-N(B_f)$ is the quotient space of $\Sigma_{h,k+s}-N(B_f)$. Since this quotient map restricted on the preimage of each one-sided $\iota_l$ is a 2-fold covering and it is one-to-one elsewhere, we shall have well-defined induced orientation on each $\iota_l$, $S_{q_i}$ and $S_{\delta_j}$. Now we reverse the order of this procedure and give the following two definitions:
\begin{definition}
When $\Sigma_g/f$ is non-orientable, a sequence of disjoint, simple and one-sided loops $\iota_l\colon\{e^{i\theta}\}\rightarrow\Sigma_g/f-B_f$, $l=1,\cdots,s$ are said to be co-oriented if:
 \begin{enumerate}
  \item when cut $\Sigma_g/f$ along $\sqcup_{l=1}^s\iota_l$ we get an orientable surface $\Sigma_{h,k+s}$, and
  \item $\Sigma_{h,k+s}$ can be given an orientation such that the induced orientation on each of $\iota_l$ is the same as their induced orientation from $\{e^{i\theta}\}$.
 \end{enumerate}
\end{definition}

\begin{definition}
Given co-oriented one-sided loops $\{\iota_1,\cdots,\iota_s\}$ on $\Sigma_g/f$, where $s=0$ if $\Sigma_g/f$ is orientable. We can apply the map $\omega_f\colon\thinspace H_1(\Sigma_g/f-B_f)\rightarrow Z_n$ and obtain well-defined data $[h,n;(\cdots,\omega_f(S_{q_i}),\cdots);(\cdots,\omega_f(S_{\delta_j}),\cdots);(\cdots,\omega_f(\iota_l),\cdots)]$, which is called the data of $f$ based on co-oriented one-sided loops $\{\iota_1,\cdots,\iota_s\}$, or simply the data of $f$.
\end{definition}

The main theorem of this section is:
\begin{theorem}\label{only}
For any two orientation reversing periodic maps, if we choose a set of co-oriented one-sided loops for each of them and get exactly the same data, then these two maps must be conjugate to each other.
\end{theorem}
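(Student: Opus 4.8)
The plan is to construct the conjugacy $h\colon\thinspace\Sigma_g\to\Sigma_{g'}$ by building it first on the quotient surface and then lifting it through the branched covering $\pi_f$, exactly as in Nielsen's treatment of the orientable case, with the extra bookkeeping forced by the non-orientability of $X:=\Sigma_g/f$ absorbed into the co-oriented loops. Throughout I would work with the tower $\Sigma_g\xrightarrow{\pi_{f^2}}Y\xrightarrow{\pi_{\bar f}}X$ and its primed analogue, and with the regular $Z_n$-covering $\pi_f\colon\thinspace\Sigma_g-W_f\to X^\circ-\{q_i\}$ classified by $\omega_f$, where $X^\circ$ denotes the interior of $X$. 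The key observation justifying the setup is that the mod-$2$ reduction of $\omega_f$ is exactly the orientation character $w_1$ of $X$, so the co-oriented loops $\iota_l$ (where $\omega_f$ is odd) and the fixed curves $\sigma_j$ over the $\delta_j$ (where $\omega_f$ is even) are intrinsic.

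First I would read off the topological type of the pair from the data: the integers $h,k,s,b$ (genus of the cut-open orientable surface $\Sigma_{h,k+s}$, number of boundary curves $\delta_j$, number of one-sided loops $\iota_l$, number of interior branch points $q_i$) are all recorded or determined, and a Riemann--Hurwitz count of the kind appearing in condition (4) of Theorem \ref{Nielsen12} forces $g=g'$. Hence I may fix a homeomorphism $\bar h\colon\thinspace X\to X'$ that carries $q_i\mapsto q_i'$, $\delta_j\mapsto\delta_j'$ and $\iota_l\mapsto\iota_l'$ and is compatible with the chosen co-orientations, so that the induced orientations on every $S_{q_i}$, $S_{\delta_j}$ and $\iota_l$ are matched with those on the primed side. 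In particular there is no sign ambiguity, and the equality of data gives $\omega_{f'}(\bar h_*[S_{q_i}])=\omega_f[S_{q_i}]$, and likewise for the $S_{\delta_j}$ and the $\iota_l$.

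The heart of the argument is to promote this to the full equality $\omega_{f'}\circ\bar h_*=\omega_f$ on all of $H_1(X^\circ-\{q_i\})$. The distinguished loops $S_{q_i},S_{\delta_j},\iota_l$ do not by themselves generate this group; the remaining generators are the handle classes of the orientable cut-open surface $\Sigma_{h,k+s}$. As in Nielsen's proof, the values of $\omega_f$ on these handle classes carry no conjugacy information, so I would adjust $\bar h$ by precomposing with a self-homeomorphism of $X$ that fixes every $q_i$, is the identity near every $\delta_j$, and preserves each $\iota_l$ with its co-orientation, chosen to realize the required change of basis on the handle classes. Concretely I would perform this on $\Sigma_{h,k+s}$, where the relevant $H_1$-automorphisms are realized by the mapping class group just as in the orientable theory underlying Theorem \ref{Nielsen11}, and then check that the homeomorphism descends across the regluing along the $\iota_l$. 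I expect this descent/realization step on the non-orientable base to be the \emph{main obstacle}, precisely because one must realize the handle adjustment while leaving the co-oriented one-sided loops setwise and co-orientation fixed; this is exactly the reason the $\iota_l$-values, unlike the handle values, must be part of the data.

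Once $\omega_{f'}\circ\bar h_*=\omega_f$, the two $Z_n$-coverings over the unbranched base are isomorphic, so $\bar h$ lifts to a homeomorphism $h_0\colon\thinspace\Sigma_g-W_f\to\Sigma_{g'}-W_{f'}$ which, after composing with a deck transformation to match a single base-point fibre, intertwines the actions, i.e.\ $h_0\circ f=f'\circ h_0$. It then remains to extend $h_0$ over $W_f$. Over each puncture $q_i$ the valency $[t_i,q_i,u_i]$ equals that of $q_i'$ by Lemma \ref{index} together with $\omega_f[S_{q_i}]=\omega_{f'}[S_{q_i'}]$, so the local branched models agree and $h_0$ extends equivariantly across $\pi_f^{-1}(q_i)$; the same holds along each fixed curve $\sigma_j$ over $\delta_j$, where the local reflection--rotation model is pinned down by $\omega_f[S_{\delta_j}]$ and the even-ness of that value. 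This is precisely the local conjugacy packaged by Lemma \ref{valency}, now applied fibrewise. Gluing the extensions yields a homeomorphism $h\colon\thinspace\Sigma_g\to\Sigma_{g'}$ with $h\circ f=f'\circ h$, the desired conjugacy.
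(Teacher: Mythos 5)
Your overall covering-space strategy (match the classifying homomorphisms on the quotients, lift the quotient homeomorphism equivariantly, then extend over the branch locus via local models) is coherent, and it is genuinely different from the paper's route: the paper never works with the non-orientable base directly, but instead cuts $\Sigma_g$ along $\pi_{f^2}^{-1}(\sqcup_j\sigma_j)$ into two oriented halves (Theorem \ref{split}), applies the orientation-preserving theory to $f^2_+$ on one half (Lemma \ref{data} plus Lemma \ref{predata}), normalizes the resulting conjugating homeomorphism $h_+$ on the boundary circles by an isotopy and a rotation-number computation (Lemma \ref{type}), and then defines the map on the other half by the explicit formula $h_-=f'\circ h_+\circ f^{-1}$, so that the only realization input ever needed is the classical orientable one behind Theorem \ref{Nielsen11}.

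The problem is that your proof has a genuine gap exactly at the step you yourself flag as the ``main obstacle'': promoting equality of the data to the full equality $\omega_{f'}\circ\bar h_*=\omega_f$ on $H_1(X^\circ-\{q_i\})$. You assert that the handle values ``carry no conjugacy information'' and can be normalized by a self-homeomorphism of $X$ fixing the $q_i$, the $\delta_j$, and the co-oriented $\iota_l$, ``just as in the orientable theory''; but the orientable normalization lemma is precisely the hard core of Theorem \ref{Nielsen11}, and no analogue for a non-orientable base carrying a prescribed system of one-sided loops is cited or proven. Your proposed fix --- realize the $H_1$-automorphism on the cut-open surface $\Sigma_{h,k+s}$ and then ``check that the homeomorphism descends across the regluing along the $\iota_l$'' --- is not a check but the entire difficulty: a homeomorphism of $\Sigma_{h,k+s}$ descends to $X$ only if it commutes with the boundary identifications forming the cross-caps, which is a strong equivariance constraint on exactly those handle slides you need. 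Moreover, since the rest of your argument is routine covering-space theory, the missing realization statement is logically equivalent to Theorem \ref{only} itself (a conjugacy $h$ induces a $\bar h$ with $\omega_{f'}\circ\bar h_*=\omega_f$, and conversely), so as written you have reduced the theorem to an unproven claim of the same strength. That the subtlety is real and not merely technical is shown by the paper's Section \ref{equivalent relation}: relations (R2)--(R4) record that moving the one-sided loops changes the data in constrained, nontrivial ways, which is exactly the phenomenon your normalization step would have to control.
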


Before giving the proof of this theorem, we need to do much preparatory work.

\begin{lemma}\label{class2'}
When cut $\Sigma_g/f^2$ along $\Fix(\bar f)=\sqcup_{j=1}^k\sigma_j$, the result surface is connected if and only if $\Sigma_g/f$ is non-orientable.
\end{lemma}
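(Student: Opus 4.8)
The plan is to reinterpret the cut surface through covering space theory, identifying the relevant double cover with the orientation double cover of $\Sigma_g/f$. Write $X=\Sigma_g/f^2$ for the oriented closed quotient surface, $\bar f$ for the induced orientation reversing involution, and $Y=\Sigma_g/f=X/\bar f$ for the target, whose boundary is $\partial Y=\sqcup_{j=1}^k\delta_j$. Since $\bar f$ reverses orientation, lemma \ref{fixed points2} forces it to act near each fixed circle $\sigma_j$ as a reflection of an annular neighborhood; in particular every $\sigma_j$ is two-sided, and the quotient folds this annulus onto a half-annulus whose core $\delta_j$ becomes a boundary component of $Y$.

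First I would reduce the statement to a count of connected components of a complement. Cutting the connected surface $X$ along the two-sided circles $\sigma_1,\dots,\sigma_k$ produces a surface $\hat X$ whose interior is homeomorphic to $X-\Fix(\bar f)$; since a surface with boundary is connected if and only if its interior is, $\hat X$ is connected if and only if $X-\Fix(\bar f)$ is connected. So it suffices to decide when $X-\Fix(\bar f)$ is connected.

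Next I would analyze the restricted quotient map. Writing $Y^\circ=Y-\partial Y$ for the interior, the map $\pi_{\bar f}$ carries $X-\Fix(\bar f)$ onto $Y^\circ$, and the restriction of $\bar f$ to $X-\Fix(\bar f)$ is a free involution; hence $\pi_{\bar f}$ is a genuine $2$-fold covering of the connected base $Y^\circ$, and $X-\Fix(\bar f)$ has either one or two components according to whether this covering is connected or trivial. The crucial identification is that this covering is isomorphic to the orientation double cover of $Y^\circ$: away from $\Fix(\bar f)$ the map $\pi_{\bar f}$ is a local homeomorphism, so the orientation of $X$ transports to a local orientation of $Y^\circ$ at the image point, and because $\bar f$ reverses orientation the two preimages $x$ and $\bar f(x)$ of a point $y\in Y^\circ$ furnish the two \emph{opposite} local orientations at $y$. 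Thus the fibre of $\pi_{\bar f}$ over $y$ is canonically the set of local orientations of $Y^\circ$ at $y$, which is exactly the fibre of the orientation double cover.

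Finally I would invoke the standard fact that the orientation double cover of a surface is connected precisely when that surface is non-orientable, and is the trivial two-sheeted cover when the surface is orientable; orientability of $Y^\circ$ is equivalent to orientability of $Y$. Combining the three steps, $\hat X$ is connected if and only if $Y^\circ$ is non-orientable, that is, if and only if $\Sigma_g/f$ is non-orientable. The step I expect to be the main obstacle is making the identification with the orientation double cover fully rigorous: one must check that the orientation transport is continuous and globally well defined, so that the two sheets genuinely correspond to the two local orientations rather than merely matching fibrewise, and that the two-sidedness at each $\sigma_j$ correctly reflects the reflection normal form supplied by lemma \ref{fixed points2}.
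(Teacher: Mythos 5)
Your proof is correct, but it takes a genuinely different route from the paper. The paper argues both directions by hand: if the cut surface is connected, it chooses a simple path $\alpha$ in $\Sigma_g/f^2-\Fix(\bar f)$ from a point $x$ to $\bar f(x)$ and observes that the loop $\pi_{\bar f}(\alpha)$ reverses orientation in $\Sigma_g/f$, so the quotient is non-orientable; if the cut surface is disconnected, $\bar f$ permutes its components, there are exactly two of them, each homeomorphic to $\Sigma_g/f$, which is therefore orientable as a subsurface of the orientable $\Sigma_g/f^2$. You instead observe that $\bar f$ acts freely off its fixed circles, so $\pi_{\bar f}$ restricts to a $2$-fold covering of the interior of $\Sigma_g/f$, and you identify this covering with the orientation double cover by pushing the orientation of $\Sigma_g/f^2$ forward through the local homeomorphism and using that the two preimages of a point carry opposite local orientations; the standard connectivity criterion for orientation covers then gives both directions at once. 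What your packaging buys is unification and transparency: the paper's two cases are precisely the standard proofs of the two directions of the criterion you cite, specialized to this covering, and your framing also explains automatically why a disconnected cut surface has exactly two components (a $2$-fold cover has at most two), a point the paper asserts rather tersely. What it costs is the orientation-double-cover machinery itself --- continuous orientation transport and the fact that a continuous fiberwise bijection of coverings over the same base is an isomorphism --- which you correctly flag as the step needing care, and which the paper's elementary argument avoids entirely. Your preliminary reduction (the cut surface is connected iff $\Sigma_g/f^2-\Fix(\bar f)$ is connected) and the passage from orientability of the interior to orientability of $\Sigma_g/f$ are both sound.
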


\begin{proof}
If the result surface is connected, we choose an arbitrary point $x\in \Sigma_g/f^2-\Fix(\bar f)$ and a simple path $\alpha\colon\thinspace[0,1]\rightarrow \Sigma_g/f^2-\Fix(\bar f)$ from $x$ to $\bar f(x)$. Since $\bar f$ is orientation reversing, we are able to go around the loop $\pi_{\bar f}(\alpha)$ on $\Sigma_g/f$ and come back to $\pi_{\bar f}(x)$ with the opposite orientation. Hence $\Sigma_g/f$ is non-orientable.

If the result surface is disconnected, then $\bar f$ permutate its components. Hence the components number is two and each of the components is homeomorphic to $\Sigma_g/f$. Hence $\Sigma_g/f$ is orientable.
\end{proof}

For each one-sided loop $\iota_l$,  let $\sigma_{k+l}=\pi_{\bar f}^{-1}(\iota_l)$ be its lift on $\Sigma_g/f^2$. Following lemma~\ref{class2'}, we cut $\Sigma_g/f^2$ along $\Fix(\bar f)\sqcup\pi_{\bar f}^{-1}(\sqcup_{l=1}^s\iota_l)=\sqcup_{j=1}^{k+s}\sigma_j$ and get two oriented compact components denoted by $\Sigma_{h,k+s}^+$ and $\Sigma_{h,k+s}^-$. Each of these components is homeomorphic to the result surface obtained when we cut $\Sigma_g/f$ along $\sqcup_{l=1}^s\iota_l$. And they induce opposite orientations on their common boundaries, correspondingly on each $\iota_l$, $l=1,\cdots,s$. In the following we always suppose that the induced orientation of each loop $\iota_l$ from $\Sigma_{h,k+s}^+$ is the same as their given orientation.

\begin{lemma}\label{t_i}
The orbit length $t_j$ of $\sigma_j$, $j=1,\cdots,k+s$ is odd.
\end{lemma}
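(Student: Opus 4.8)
The plan is to work with the tower $\Sigma_g\xrightarrow{\pi_{f^2}}\Sigma_g/f^2\xrightarrow{\pi_{\bar f}}\Sigma_g/f$ and to compare the action of $\langle f\rangle\cong Z_{2m}$ with that of its index-two subgroup $\langle f^2\rangle\cong Z_m$ on the preimage of $\sigma_j$. For each $j$, set $L_j:=\pi_{f^2}^{-1}(\sigma_j)\subset\Sigma_g$; by the definition of orbit length, the number of connected components of $L_j$ is exactly $t_j$, and each component is one of the simple loops making up $\pi_{f^2}^{-1}(\sigma_j)$. The first point to record is that $L_j$ is in fact a full $\pi_f$-preimage of a \emph{connected} loop downstairs: since $\pi_{\bar f}^{-1}(\pi_{\bar f}(\sigma_j))=\sigma_j$ (for $j\le k$ because $\sigma_j\subset\Fix(\bar f)$ is a fold curve, and for $j>k$ by the very definition $\sigma_{k+l}=\pi_{\bar f}^{-1}(\iota_l)$), we get $L_j=\pi_f^{-1}(\delta_j)$ respectively $L_j=\pi_f^{-1}(\iota_{j-k})$.

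Next I would establish that $f$ itself permutes the components of $L_j$, i.e. $f(L_j)=L_j$. Using the intertwining relation $\bar f\circ\pi_{f^2}=\pi_{f^2}\circ f$ one computes $f(L_j)=\pi_{f^2}^{-1}(\bar f(\sigma_j))$, so it suffices to check $\bar f(\sigma_j)=\sigma_j$ as a set. For $j\le k$ this is immediate since $\sigma_j\subset\Fix(\bar f)$; for $j>k$ it holds because $\bar f$ is the deck involution of the double cover $\pi_{\bar f}$ and hence preserves each fiber set, in particular $\sigma_{k+l}=\pi_{\bar f}^{-1}(\iota_l)$. Thus both $\langle f\rangle$ and $\langle f^2\rangle$ act on the finite set of $t_j$ components of $L_j$.

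The heart of the argument is a transitivity comparison. Because $\pi_{f^2}$ is the quotient map by $\langle f^2\rangle$ and $\sigma_j$ is connected, $\langle f^2\rangle$ already acts transitively on the components of $L_j$; a fortiori so does $\langle f\rangle$. Hence the stabilizer of a chosen component $\tilde\sigma$ in $Z_{2m}$ is the unique subgroup of index $t_j$, namely $\langle f^{t_j}\rangle$, and the resulting action on the component set gives an identification with $Z_{t_j}$ under which $f\mapsto 1$ and therefore $f^2\mapsto 2$. Transitivity of $\langle f^2\rangle$ now says precisely that the residue $2$ generates $Z_{t_j}$, i.e. $\gcd(2,t_j)=1$; equivalently $t_j$ is odd, which is the assertion. (Said differently, this is the statement that the $\langle f\rangle$-stabilizer of $\tilde\sigma$ has twice the order of its $\langle f^2\rangle$-stabilizer, so it is not contained in $\langle f^2\rangle$ and must contain an orientation-reversing odd power of $f$, forcing $t_j$ odd.)

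I expect the main obstacle to be the bookkeeping in the first two steps rather than the final number theory: one must make sure that $L_j$ is genuinely a full preimage of a connected loop under $\pi_f$ (so that the $Z_{2m}$-action is transitive and the stabilizer is the clean cyclic group $\langle f^{t_j}\rangle$), and one must treat the fold curves $\sigma_j\subset\Fix(\bar f)$ and the one-sided lifts $\sigma_{k+l}=\pi_{\bar f}^{-1}(\iota_l)$ on an equal footing, the common feature being only that $\bar f$ fixes $\sigma_j$ setwise. Once the $f$-invariance of $L_j$ and the two transitivity facts are in place, the conclusion that $2$ generates $Z_{t_j}$, hence that $t_j$ is odd, is immediate.
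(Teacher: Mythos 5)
Your proof is correct and takes essentially the same route as the paper's: both arguments show that $f$ permutes the $t_j$ components of $\pi_{f^2}^{-1}(\sigma_j)$ in a single cyclic orbit (via $\bar f(\sigma_j)=\sigma_j$) and that $\langle f^2\rangle$ must act transitively on them because the quotient of these circles by $f^2$ is the connected loop $\sigma_j$, which forces $\gcd(2,t_j)=1$. Your stabilizer/quotient-group formulation is just a more explicit rendering of the paper's terse statement that the orbit space of the circles $f^r(\Delta)$ under $f^2$ has only one component.
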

\begin{proof}
It's clear that $\pi_{f^2}\colon\thinspace\pi_{f^2}^{-1}(\sigma_j)\rightarrow\sigma_j$ is an $m$-fold covering, then we know that $\pi_{f^2}^{-1}(\sigma_j)$ is disjoint union of simple closed curves on $\Sigma_g$. From $\bar f(\sigma_j)=\sigma_j$, we know that for each circle $\Delta\subset \pi_{f^2}^{-1}(\sigma_j)$, $f(\Delta)\subset \pi_{f^2}^{-1}(\sigma_j)$. Thus $\pi_{f^2}^{-1}(\sigma_j)=\sqcup_{r=1}^{t_j} f^r(\Delta)$. Because the orbit space of these circles $f^r(\Delta), r=1,2,\cdots,t_j$ under the action of $f^2$ has only one component, we can deduce that $t_j$ must be an odd integer.
\end{proof}
\begin{lemma}\label{class1}
If $\Fix(\bar f)\neq\emptyset$ ($k>0$), then $m$ is odd.
\end{lemma}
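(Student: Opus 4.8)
The plan is to manufacture an explicit \emph{odd} power of $f$ that fixes an entire circle pointwise, and then feed that into Lemma~\ref{fixed points2} to read off the parity of $m$. Fix one fixed curve $\sigma=\sigma_1\subset\Fix(\bar f)$ and let $t=t_1$ be its orbit length, which is odd by Lemma~\ref{t_i}. Writing $\pi_{f^2}^{-1}(\sigma)=\Delta\sqcup f(\Delta)\sqcup\cdots\sqcup f^{t-1}(\Delta)$ with $f^t(\Delta)=\Delta$ exactly as in the proof of Lemma~\ref{t_i}, I first record that this $\langle f\rangle$-orbit has size $t$, so $t\mid n=2m$, and since $t$ is odd this already forces $t\mid m$.

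The key step is to pin down $f^t$ restricted to the single circle $\Delta$. Composing $\bar f\circ\pi_{f^2}=\pi_{f^2}\circ f$ with itself $t$ times and using $\bar f^2=\mathrm{id}$ together with $t$ odd gives $\bar f\circ\pi_{f^2}=\pi_{f^2}\circ f^t$; evaluating on $\Delta$ and using $\pi_{f^2}(\Delta)=\sigma\subset\Fix(\bar f)$ shows $\pi_{f^2}\circ f^t=\pi_{f^2}$ on $\Delta$. On the other hand, since the circles $f^r(\Delta)$ are pairwise disjoint, two points of $\Delta$ share a $\pi_{f^2}$-image precisely when they differ by an element of $\langle f^{2t}\rangle$; hence $\pi_{f^2}\colon\Delta\to\sigma$ is a regular cyclic covering of degree $m/t$ with deck group $\langle f^{2t}|_\Delta\rangle$. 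As $f^t|_\Delta$ is a self-homeomorphism of $\Delta$ commuting with this projection, it is a deck transformation, so $f^t|_\Delta=(f^{2t}|_\Delta)^c$ for some integer $c$. Setting $e=t(1-2c)$, this says $f^e$ fixes $\Delta$ pointwise.

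It then remains only to extract parity. The integer $e=t(1-2c)$ is a nonzero odd number, so $f^e$ is an orientation reversing homeomorphism whose fixed set contains the circle $\Delta$; in particular $f^e\neq\mathrm{id}$, since the even number $n$ cannot divide a nonzero odd $e$. By Lemma~\ref{fixed points2} the periodic map $f^e$ must then have period $2$, i.e.\ $f^{2e}=\mathrm{id}$, which forces $n=2m\mid 2e$ and hence $m\mid e$. Because $e$ is odd and nonzero, $m$ is odd, which is the assertion.

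The main obstacle is the middle paragraph: recognizing that $f^t|_\Delta$ is compelled to be a deck transformation of the cyclic covering $\pi_{f^2}\colon\Delta\to\sigma$, since this is exactly what converts the ``orientation reversing downstairs'' hypothesis into an odd power of $f$ that pins a whole circle. Once that identification is in hand, the closing divisibility bookkeeping is routine.
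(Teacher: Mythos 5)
Your proof is correct, but it takes a genuinely different route from the paper's. The paper's own argument is a one-line orbit count at a single point: pick $x\in\Fix(\bar f)$ away from branch points; its fiber $\pi_{f^2}^{-1}(x)$ has $m$ points and is by construction a single $\langle f^2\rangle$-orbit, while $f$ (which preserves the fiber because $\bar f(x)=x$) acts on it as an $m$-cycle; since the square of an $m$-cycle is transitive only when $m$ is odd, $m$ must be odd. This is exactly parallel to the proof of Lemma~\ref{t_i} (which is why the paper says ``similar to''), and it uses no other lemma as input. You instead work on a whole invariant circle $\Delta\subset\pi_{f^2}^{-1}(\sigma)$: using Lemma~\ref{t_i}, the intertwining relation $\bar f\circ\pi_{f^2}=\pi_{f^2}\circ f$, and the identification of $\pi_{f^2}\colon\Delta\to\sigma$ as a regular cyclic covering with deck group $\langle f^{2t}|_\Delta\rangle$, you exhibit a nonzero \emph{odd} exponent $e$ with $f^e|_\Delta=\mathrm{id}$, and then Lemma~\ref{fixed points2} forces $f^{2e}=\mathrm{id}$, hence $m\mid e$ and $m$ is odd. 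Each step checks out (in particular the claim that points of $\Delta$ in the same $\pi_{f^2}$-fiber differ by an element of $\langle f^{2t}\rangle$ does follow from the disjointness of the components $f^r(\Delta)$ together with $t$ odd). The trade-off: the paper's argument is shorter and self-contained, needing only the definition of $\pi_{f^2}$; yours is longer and consumes Lemma~\ref{t_i} and Lemma~\ref{fixed points2}, but it yields extra geometric information --- an explicit odd power of $f$ restricting to the identity on a fixed circle, i.e.\ the local reflection structure along $\Fix(\bar f)$ --- which the counting argument does not make visible.
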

\begin{proof}
Similar to the proof of the lemma ~\ref{t_i}, we suppose that $x\in\Fix(\bar f)$ and $\pi_{f^2}^{-1}(x)=\{\tilde{x}_1,\cdots,\tilde{x}_m\}$. The orbit space of $\{\tilde{x}_1,\cdots,\tilde{x}_m\}$ under the action of $f^2$ is $x\in\Sigma_g/f^2$, hence $m$ must be odd.
\end{proof}

\begin{theorem}\label{split}
When we cut $\Sigma_g$ along the circles in $\pi_{f^2}^{-1}(\sqcup_{j=1}^{k+s}\sigma_j)$, the result surface has exactly $2$ components.
\end{theorem}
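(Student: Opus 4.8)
The plan is to turn the statement into a connectivity question for a cyclic covering and then to resolve that question by an equivariant folding argument. Write $\Gamma:=\pi_{f^2}^{-1}(\sqcup_{j=1}^{k+s}\sigma_j)$ and $C:=\sqcup_{j=1}^{k+s}\sigma_j$. Since $\Sigma_g$ is orientable, every circle of $\Gamma$ is two-sided, so the number of components of the surface obtained by cutting $\Sigma_g$ along $\Gamma$ equals the number of components of the open complement $\Sigma_g\setminus\Gamma$. Because the curves $\sigma_j$ avoid the finite branch set $B_{f^2}$, I have
$\Sigma_g\setminus\Gamma=\pi_{f^2}^{-1}\big((\Sigma_g/f^2)\setminus C\big)=\pi_{f^2}^{-1}(\operatorname{int}\Sigma_{h,k+s}^+)\sqcup\pi_{f^2}^{-1}(\operatorname{int}\Sigma_{h,k+s}^-)=:P^+\sqcup P^-$.
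The relation $\pi_{f^2}\circ f=\bar f\circ\pi_{f^2}$ together with the fact that $\bar f$, as the deck transformation of $\pi_{\bar f}$, interchanges the two sheets $\Sigma_{h,k+s}^\pm$ shows that $f$ carries $P^+$ onto $P^-$, while $f^2$ preserves each of them. Hence $P^+\cong P^-$, and the whole theorem reduces to showing that $P^+$ is connected.

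Next I would observe that $\pi_{f^2}$ restricts to the regular $Z_m=\langle f^2\rangle$ branched covering $\overline{P^+}\to\Sigma_{h,k+s}^+$ over the connected base $\Sigma_{h,k+s}^+$; therefore $\langle f^2\rangle$ permutes the components of $P^+$ transitively, and $P^+$ is connected if and only if the monodromy homomorphism $\rho^+\colon\pi_1(\operatorname{int}\Sigma_{h,k+s}^+\setminus B^+)\to Z_m$ is onto, where $B^+=B_{f^2}\cap\operatorname{int}\Sigma_{h,k+s}^+$. Two facts are then available. First, the full monodromy $\rho\colon\pi_1\big((\Sigma_g/f^2)\setminus B_{f^2}\big)\to Z_m$ of $\pi_{f^2}$ is surjective, since $\pi_{f^2}$ is an honest covering away from the finitely many branch points and $\Sigma_g$ is connected. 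Second, using the van Kampen decomposition $(\Sigma_g/f^2)\setminus B_{f^2}=(\operatorname{int}\Sigma_{h,k+s}^+\setminus B^+)\cup_C(\operatorname{int}\Sigma_{h,k+s}^-\setminus B^-)$, the image of $\rho$ is generated by $\operatorname{im}\rho^+$, $\operatorname{im}\rho^-$, and the $\rho$-values of loops crossing $C$; and the lift $f$ of $\bar f$ gives $\operatorname{im}\rho^-=\operatorname{im}\rho^+$ by a direct comparison of lifts.

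The hard part will be to prove that the crossing loops contribute nothing new, that is $\operatorname{im}\rho=\operatorname{im}\rho^+$. I would argue this by folding: given a loop that leaves $\Sigma_{h,k+s}^+$ through $C$ at a point $P_1$, runs through $\Sigma_{h,k+s}^-$ along an arc $\mu$, and re-enters at $P_2$, I replace $\mu$ by $\bar f(\mu)\subset\Sigma_{h,k+s}^+$ and splice in arcs of $C$ joining $P_i$ to $\bar f(P_i)$. Equivariance of $f$ over $\bar f$ shows the monodromy is unchanged modulo the boundary monodromies $\rho^+(\sigma_j)$, which already belong to $\operatorname{im}\rho^+$. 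The subtlety to handle carefully is the two kinds of curves in $C$: for a fixed circle $\sigma_j$ ($j\le k$) the involution $\bar f$ fixes $P_1,P_2$, so the spliced arcs are trivial, whereas for $\sigma_{k+l}=\pi_{\bar f}^{-1}(\iota_l)$ the point $\bar f(P_i)$ is the antipodal point on the same circle and the spliced arc contributes exactly a boundary monodromy $\rho^+(\sigma_{k+l})\in\operatorname{im}\rho^+$.

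Combining the three ingredients yields $\operatorname{im}\rho^+=\operatorname{im}\rho=Z_m$, hence $P^+$ (and by the swap $P^-$) is connected, and cutting $\Sigma_g$ along $\Gamma$ produces exactly the two pieces $\overline{P^+}$ and $\overline{P^-}$. I expect the folding step to be the only genuinely delicate point; the earlier lemmas on the parity of the orbit lengths (Lemma~\ref{t_i}) and on $m$ being odd when $\Fix(\bar f)\neq\emptyset$ (Lemma~\ref{class1}) should streamline the bookkeeping of the boundary monodromies $\rho^+(\sigma_j)$ in that step.
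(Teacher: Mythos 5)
Your overall strategy is genuinely different from the paper's, and most of it is sound: reducing the theorem to surjectivity of $\rho^+$, the surjectivity of the full monodromy $\rho$, and the identity $\operatorname{im}\rho^-=\operatorname{im}\rho^+$ (which holds because $f$ conjugates the deck group $\langle f^2\rangle$ trivially, $Z_m$ being abelian) are all correct. The paper's own proof never mentions monodromy: there $f$ permutes the components of the cut surface, the oddness of $t_j$ (Lemma~\ref{t_i}) shows the two sides of each cutting circle lie in a single $f$-orbit of components, so that orbit glues into an open and closed subset of $\Sigma_g$, and a parity analysis of the orbit length forces it to equal $2$. Both arguments hinge on the same parity facts; yours trades the open-and-closed argument for covering-space algebra.

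The gap is at the crux, in the sentence ``the spliced arc contributes exactly a boundary monodromy $\rho^+(\sigma_{k+l})$'', and it occurs precisely in the only case where folding is actually needed. If the excursion $\mu$ leaves and re-enters $\Sigma_{h,k+s}^-$ through the \emph{same} circle, its $\rho$-value already lies in $\operatorname{im}\rho^-=\operatorname{im}\rho^+$ by abelianness, with no folding. The essential crossing loops (your stable letters, present as soon as $k+s\ge 2$) cross two \emph{distinct} circles, and there the correction is a half-integer combination rather than a boundary monodromy: comparing the lift of $\bar f(\mu)$ starting at $\tilde P_1$ with the lift $f\circ\tilde\mu$ starting at $f(\tilde P_1)$, the two initial points differ by the deck transformation $(f^2)^{(\epsilon_1\zeta_{l_1}-1)/2}$ at a free circle, resp.\ $(f^2)^{-(m+1)/2}$ at a fixed circle --- the latter using that $f^m$ fixes $\pi_{f^2}^{-1}(\Fix(\bar f))$ pointwise, which follows from the local model in Lemma~\ref{type} and not merely from $\bar f(P_i)=P_i$. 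Carrying this through gives $\rho(\gamma')-\rho(\gamma)=\delta$ for the folded loop $\gamma'$, with for instance $\delta=\tfrac12\left(\epsilon_1\zeta_{l_1}-\epsilon_2\zeta_{l_2}\right)$ when both circles are free: this is \emph{half} of an element of $\operatorname{im}\rho^+$, not visibly an element of it. The gap is bridgeable, but by a missing idea rather than bookkeeping: in every case $2\delta\in\operatorname{im}\rho^+$, and $\operatorname{im}\rho^+$ has odd index in $Z_m$ --- when $s>0$ it contains $\zeta_l\bmod m$ with $\zeta_l$ odd (Lemma~\ref{t_i}), so its index divides the odd number $\gcd\{\zeta_l,m\}$, and when $s=0$ the number $m$ itself is odd (Lemma~\ref{class1}) --- hence multiplication by $2$ is injective on $Z_m/\operatorname{im}\rho^+$ and $2\delta\in\operatorname{im}\rho^+$ forces $\delta\in\operatorname{im}\rho^+$. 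With this divisibility argument inserted your proof closes and stands as a genuine alternative to the paper's; as written, the folding step fails exactly on the loops that carry all the content.
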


\begin{proof}
We know that each component is a compact surface and $f$ induces a permutation among these components $\{\Sigma^{(\lambda)}\}$. Choose one of these components, denoted by $\Sigma_{\tau,e}$. Consider the set $\sqcup_{i=1}^d f^i(\Sigma_{\tau,e})$, with $d$ the smallest positive integer such that $f^d(\Sigma_{\tau,e})=\Sigma_{\tau,e}$.

Choose a circle $\Delta_j\subset\pi_{f^2}^{-1}(\sigma_j)$ and its tubular neighborhood $N(\Delta_j)=S^1\times (-1,1)$ in $\Sigma_g$. When we cut along $\Delta_j$, we get $S^1\times [0,1)$ and $S^1\times (-1,0]$, each of which is the neighborhood of a boundary of some component $\Sigma^{(\lambda)}$.

$f^{t_j}$ is a rotation when restricted on $\Delta_j$. By lemma \ref{t_i}, $t_j$ is odd and $f^{t_j}(S^1\times (-1,0])=S^1\times [0,1)$. Hence if $S^1\times [0,1)\subset\sqcup_{i=1}^d f^i(\Sigma_{\tau,e})$, we have $S^1\times (-1,0]=f^{t_j}(S^1\times [0,1))\subset\sqcup_{i=1}^d f^i(\Sigma_{\tau,e})$. This tells us that all the boundary circles of $\sqcup_{i=1}^d f^i(\Sigma_{\tau,e})$ can be paired with each other, such that if we glue each pair into one circle properly, we get a both open and closed subset of $\Sigma_g$, hence this subset is $\Sigma_g$ itself.

By the above analysis, in particular, we know that each component contains equal number of circles from the set of $2t_j$ circles, generated when we cut $\Sigma_g$ along $\pi_{f^2}^{-1}(\sigma_j)$. This implies that $d|2t_j$. Again, $t_j$ must be an odd integer, then:

If $d$ is an odd integer, we have $d|t_j$ and $f^{t_j}=(f^d)^{a_j}$ for some integer $a_j$. This implies that if $S^1\times [0,1)\subset\Sigma_{\tau,e}$, we have $S^1\times (-1,0]=(f^d)^{a_j}(S^1\times [0,1))\subset (f^d)^{a_j}(\Sigma_{\tau,e})=\Sigma_{\tau,e}$. And all the circle boundaries of $\Sigma_{\tau,e}$ can be paired with each other, such that when we glue each pair into one circle properly, we get a both open and closed subset of $\Sigma_g$ and we know that it is $\Sigma_g$ itself. This means that $d=1$. Then $\Sigma_{\tau,e}/f^2=\Sigma_{h,k+s}^+\sqcup\Sigma_{h,k+s}^-$, which contradicts to the continuation of quotient map $\pi_{f^2}$.

Hence $d$ is even and we have $(d/2)|t_j$. Then $f^{t_j}=(f^{d/2})^{a_j}$ for some odd integer $a_j$. This implies that if $S^1\times [0,1)\subset\Sigma_{\tau,e}$, we have $S^1\times (-1,0]=(f^{d/2})^{a_j}(S^1\times [0,1))\subset f^{d/2}(\Sigma_{\tau,e})$. In general, each circle boundary of $\Sigma_{\tau,e}$ can be paired with the one in $f^{d/2}(\Sigma_{\tau,e})$, such that when we glue each pair into one circle properly, we get a both open and closed subset of $\Sigma_g$ and we know that it is $\Sigma_g$ itself. This means that $d=2$.
\end{proof}

By the above theorem, we shall get two components $\Sigma_{\tau,e}^+$ and $\Sigma_{\tau,e}^-$ when we cut $\Sigma_g$ along $\pi_{f^2}^{-1}(\sqcup_{j=1}^{k+s}\sigma_j)$.

Now we have an orientation preserving period map $f^2_+\colon\thinspace\Sigma_{\tau,e}^+\rightarrow\Sigma_{\tau,e}^+$, a branched covering $\pi_{f^2_+}\colon\thinspace\Sigma_{\tau,e}^+\rightarrow\Sigma_{h,k+s}^+$ and a homomorphism $\omega_{f^2_+}\colon\thinspace H_1(\Sigma_{h,k+s}^+-B_{f^2_+})\rightarrow Z_m$. Since for each loop $\gamma\subset\Sigma_{h,k+s}^+$, $\omega_{f^2_+}([\gamma])=\omega_{f^2}([\gamma])$, for convenient, we always use the same notation $\omega_{f^2}$.

We denote the branch points of $\pi_{f^2_+}$ by $B_{f^2_+}=\{\tilde q_1,\cdots,\tilde q_b\}$. Now each boundary component $S_{\tilde q_i}$ or $\sigma_j$ of $\Sigma_{h,k+s}^+-N(B_{f^2}^+)$ has induced orientation.

\begin{definition}
Let $\theta_i=\omega_{f^2}(S_{\tilde q_i})$, $i=1,2,\cdots,b$; $\eta_j=\omega_{f^2}(\sigma_j)$, $j=1,2,\cdots,k+s$. Then we have a collection of data $[h,m;(\theta_1,\cdots,\theta_b);(\eta_1,\cdots,\eta_{k+s})]$, called the pre-data of $f$ based on co-oriented one-sided loops $\{\iota_1,\cdots,\iota_s\}$, or simply the pre-data of $f$.
\end{definition}

By results of section \ref{orientable}:
\begin{lemma}\label{predata}
The conjugacy class of $f^2_+\colon\thinspace\Sigma_{\tau,e}^+\rightarrow\Sigma_{\tau,e}^+$ is determined by the pre-data of $f$.
\end{lemma}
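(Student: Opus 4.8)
The plan is to observe that $f^2_+$ is exactly the kind of object treated in Section~\ref{orientable}, so that the lemma reduces to a direct application of the results established there. First I would record that $\Sigma_{\tau,e}^+$ is a connected \emph{bounded} oriented surface: it is one of the two components produced by Theorem~\ref{split}, hence it carries the boundary circles created by the cut and inherits an orientation. Because $f$ interchanges the two components $\Sigma_{\tau,e}^{\pm}$ (the $d=2$ conclusion in the proof of Theorem~\ref{split}), the orientation preserving map $f^2$ preserves $\Sigma_{\tau,e}^+$ and restricts there to an orientation preserving periodic map $f^2_+$ of period $m$. Its quotient is $\Sigma_{h,k+s}^+$, whose boundary components are precisely $\sigma_1,\dots,\sigma_{k+s}$ and whose branch points under $\pi_{f^2_+}$ are precisely $\tilde q_1,\dots,\tilde q_b$.

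Next I would match the pre-data $[h,m;(\theta_1,\dots,\theta_b);(\eta_1,\dots,\eta_{k+s})]$ against the data of Section~\ref{orientable}: the loops $S_{\tilde q_i}$ encircling the isolated branch points play the role of the $S_{q_i}$, while the boundary loops $\sigma_j$ play the role of the $\delta_j$, all carrying the induced orientation from $\Sigma_{h,k+s}^+$. By Lemma~\ref{index}, the single value $\theta_i=\omega_{f^2}(S_{\tilde q_i})$ determines the full valency $[t,q,u]$ of $S_{\tilde q_i}$, and likewise $\eta_j=\omega_{f^2}(\sigma_j)$ determines the valency of $\sigma_j$. Consequently, two orientation reversing maps sharing the same pre-data have quotients of the same genus $h$, the same period $m$, and, via Lemma~\ref{index}, identical valencies at every branch point and every boundary loop.

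Finally I would apply Lemma~\ref{valency} followed by Theorem~\ref{Nielsen11}. For this I must verify the nondegeneracy hypothesis $B_{f^2_+}\cup\partial(\Sigma_{h,k+s}^+)\neq\emptyset$ of Lemma~\ref{valency}: this always holds because $k+s>0$, since by Lemma~\ref{class2'} the orientable case forces $k>0$ while the non-orientable case forces $s>0$. Choosing an orientation preserving homeomorphism of the two quotient surfaces $\Sigma_{h,k+s}^+$ that carries branch points to branch points and boundary loops to boundary loops with matching valencies---possible because equal pre-data fixes the topological type of the quotient and the number of marked points---Lemma~\ref{valency} yields local conjugacy, and Theorem~\ref{Nielsen11} upgrades this to conjugacy of the two maps $f^2_+$. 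I do not expect a genuine mathematical obstacle; the only delicate point is the bookkeeping that identifies the boundary loops of $\Sigma_{h,k+s}^+$ with the oriented curves $\sigma_j$ and the isolated branch points with the $\tilde q_i$, so that the pre-data coincides verbatim with the Section~\ref{orientable} data to which the cited results apply.
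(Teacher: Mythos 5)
Your proposal is correct and follows exactly the route the paper intends: the paper's own justification is simply ``By results of section~\ref{orientable}'', i.e.\ the observation (which you spell out) that the pre-data is verbatim the Section~\ref{orientable} data of the orientation preserving periodic map $f^2_+$ on the bounded oriented surface $\Sigma_{\tau,e}^+$, so that Lemma~\ref{index}, Lemma~\ref{valency} and Theorem~\ref{Nielsen11} determine its conjugacy class. Your extra care in checking the hypothesis $B_{f^2_+}\cup\partial(\Sigma_{h,k+s}^+)\neq\emptyset$ via $k+s>0$ is a sound and welcome addition that the paper leaves implicit.
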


Similar to lemma \ref{index}, we have:
\begin{lemma}\label{type}
Suppose that for each $\sigma_j$, $\pi_{f^2}^{-1}(\sigma_j)$ has $t_j$ components and $f^{t_j}$ restricted on each of them has rotation number $p_j/q_j$. Then
 \begin{enumerate}
  \item $t_j=\gcd\{\omega_f(S_{\delta_j}),m\}$, $q_j=\frac{m}{t_j}$, $p_j=(\frac{\omega_f(S_{\delta_j})}{t_j})^{-1} \mod q_j$ for $j=1,\cdots,k$
  \item $t_{k+l}=\gcd\{\omega_f(\iota_l),n\}$, $q_{k+l}=\frac{n}{t_j}$, $p_{k+l}=(\frac{\omega_f({\iota_l})}{t_{k+l}})^{-1} \mod {q_{k+l}}$ for $l=1,\cdots,s$.
 \end{enumerate}
\end{lemma}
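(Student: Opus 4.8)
The plan is to reduce both parts to Lemma~\ref{index}, which reads off the valency of a simple loop from the value of the relevant $\omega$ at that loop. The two parts use two different covering maps. For the one-sided loops $\iota_l$ I would apply Lemma~\ref{index} directly to $f$ (period $n$) and the loop $\iota_l\subset\Sigma_g/f-B_f$. For the boundary loops $S_{\delta_j}$, however, I must pass through $\pi_{f^2}$ and the loop $\sigma_j$ and then transport the answer back through the fold $\pi_{\bar f}$; the factor of $2$ this introduces, controlled by $m$ being odd, is exactly what accounts for the appearance of $m$ rather than $n$ in part (1) and for the factor $2$ concealed in its rotation number.

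For part (2), I would first note that $\pi_f=\pi_{\bar f}\circ\pi_{f^2}$ and $\pi_{\bar f}^{-1}(\iota_l)=\sigma_{k+l}$, so that $\pi_f^{-1}(\iota_l)=\pi_{f^2}^{-1}(\sigma_{k+l})$ as subsets of $\Sigma_g$, with the same components and the same action of $f^{t_{k+l}}$. Hence the valency of the simple loop $\iota_l$ for the branched covering $\pi_f$ is precisely $[t_{k+l},q_{k+l},u_{k+l}]$ with the $t_{k+l},q_{k+l},p_{k+l}$ of the statement: the orbit length is the number of components of $\pi_f^{-1}(\iota_l)$, each component covers $\iota_l$ with multiplicity $n/t_{k+l}$, and $f^{t_{k+l}}$ acts on a component with rotation number $p_{k+l}/q_{k+l}$, $q_{k+l}=n/t_{k+l}$. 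Lemma~\ref{index} applied to $f$ then yields $t_{k+l}=\gcd\{\omega_f(\iota_l),n\}$, $q_{k+l}=n/t_{k+l}$, and $u_{k+l}=\omega_f(\iota_l)/t_{k+l}=p_{k+l}^{-1}\bmod q_{k+l}$, i.e.\ $p_{k+l}=(\omega_f(\iota_l)/t_{k+l})^{-1}\bmod q_{k+l}$.

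For part (1), the geometric input is that near a fixed circle $\sigma_j\subset\Fix(\bar f)$ the map $\bar f$ is a reflection, so $\pi_{\bar f}$ folds an annular neighborhood of $\sigma_j$ onto a collar of $\delta_j$ and carries $\sigma_j$ homeomorphically onto $\delta_j$, while the interior push-off $S_{\delta_j}$ lifts to a loop $\sigma_j^{+}$ parallel, hence homologous, to $\sigma_j$ in $\Sigma_g/f^2-B_{f^2}$. Lifting $S_{\delta_j}$ through $\pi_f=\pi_{\bar f}\circ\pi_{f^2}$ thus amounts to lifting $\sigma_j^{+}$ through $\pi_{f^2}$, which gives the key identity $\omega_f(S_{\delta_j})=2\,\omega_{f^2}(\sigma_j)=2\eta_j$ in $Z_n$. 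Since $k>0$, Lemma~\ref{class1} gives that $m$ is odd, so $\gcd\{\omega_f(S_{\delta_j}),m\}=\gcd\{2\eta_j,m\}=\gcd\{\eta_j,m\}$; and Lemma~\ref{index} applied to the orientation preserving map $f^2$ (period $m$) and the loop $\sigma_j$ gives $\gcd\{\eta_j,m\}=t_j$, together with $q_j=m/t_j$ and $\eta_j/t_j=\rho^{-1}\bmod q_j$, where $\rho/q_j$ is the rotation number of $(f^2)^{t_j}$ on a component $\Delta$ of $\pi_{f^2}^{-1}(\sigma_j)$. This settles the first two formulas.

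The main obstacle is the rotation number in part (1), where I must reconcile the rotation number of $f^{t_j}$ in the statement with that of $(f^2)^{t_j}$ delivered by Lemma~\ref{index} for $f^2$. Because $t_j$ is odd (Lemma~\ref{t_i}), the $f$-orbit and the $f^2$-orbit of $\Delta$ have the same length $t_j$, so $(f^2)^{t_j}|_\Delta=(f^{t_j}|_\Delta)^2$ and hence $\rho\equiv 2p_j\bmod q_j$. Combining $\eta_j/t_j=(2p_j)^{-1}\bmod q_j$ with $\omega_f(S_{\delta_j})=2\eta_j$ gives $\omega_f(S_{\delta_j})/t_j=2\eta_j/t_j=2(2p_j)^{-1}$; and since $q_j=m/t_j$ divides the odd number $m$, the residue $2$ is invertible modulo $q_j$, so $(\omega_f(S_{\delta_j})/t_j)^{-1}\equiv 2^{-1}\cdot 2p_j\equiv p_j\bmod q_j$, as claimed. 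The only point needing care beyond this bookkeeping is fixing orientations so that $\omega_f(S_{\delta_j})=2\eta_j$ holds exactly rather than up to sign, which is precisely what the co-orientation convention on $\sigma_j^{+}$ and $\Sigma_{h,k+s}^{+}$ is designed to guarantee.
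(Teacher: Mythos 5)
Your part (2) is correct, and it is a genuinely shorter route than the paper's: where the paper redoes the path-lifting computation in an explicit local model $S^1\times[-1,1]$, you observe that $\pi_f^{-1}(\iota_l)=\pi_{f^2}^{-1}(\sigma_{k+l})$, so that $\iota_l$ has valency $[t_{k+l},\,n/t_{k+l},\,p_{k+l}^{-1}]$ with respect to $\pi_f$, and Lemma \ref{index} applied to $f$ gives all three formulas at once. Likewise your key identity $\omega_f(S_{\delta_j})=2\,\omega_{f^2}(\sigma_j)=2\eta_j$ (via the closed lift of $S_{\delta_j}$ parallel to $\sigma_j$) is correct and not circular, and your mod-$q_j$ bookkeeping mirrors the paper's manipulation $u(2p_j)-vq_j=1$.

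Part (1), however, has a gap: you never prove the claim $q_j=m/t_j$, you assume it. In the statement, $q_j$ is the denominator of the rotation number of $f^{t_j}$ on $\Delta_j$, i.e.\ the order of the rotation $f^{t_j}|_{\Delta_j}$. What Lemma \ref{index} applied to $f^2$ and $\sigma_j$ gives you is that the square $(f^2)^{t_j}|_{\Delta_j}=(f^{t_j}|_{\Delta_j})^2$ has order exactly $m/t_j$; this leaves two possibilities, $q_j=m/t_j$ or $q_j=2m/t_j$, and nothing in your argument excludes the second. Your congruence $\rho\equiv 2p_j \bmod q_j$ and the final inversion of $2$ silently presuppose the first (if $q_j$ were $2m/t_j$, the square would have rotation number $p_j/(m/t_j)$ and your computation would break down). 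The even alternative is not vacuous: it is exactly what happens for the circles $\sigma_{k+l}$ over one-sided loops, where by your own part (2) the denominator is $n/t_{k+l}=2m/t_{k+l}$ while the square still has denominator $m/t_{k+l}$; so the leap from the order of the square to the order of the map cannot be made formally and must use that $\sigma_j\subset\Fix(\bar f)$ for $j\le k$. This is precisely the paper's orbit count: for $\tilde x\in\Delta_j$ and $x=\pi_{f^2}(\tilde x)\in\Fix(\bar f)$, the whole $f$-orbit of $\tilde x$ lies in the $m$-point fiber $\pi_{f^2}^{-1}(x)$, hence has only $m$ points, so the stabilizer of $\tilde x$ in $\langle f\rangle$ is $\{\mathrm{id},f^m\}$; therefore $f^m$ fixes $\Delta_j$ pointwise, $(f^{t_j}|_{\Delta_j})^{m/t_j}=f^m|_{\Delta_j}=\mathrm{id}$, and $q_j$ divides the odd number $m/t_j$, forcing $q_j=m/t_j$. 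With this supplement your proof of part (1) closes; without it, one of the three asserted formulas is unproven and the formula for $p_j$ depends on it.
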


\begin{proof}
For $\sigma_j$, $j=1,\cdots,k$, we pick $x\in\sigma_j$ and choose $\tilde x\in\pi_{f^2}^{-1}(x)$. Then there are $m$ points in the orbit of $\tilde x$ under the action of $f$ and there are $m/t_j$ points in the orbit of $\tilde x$ under the action of $f^{t_j}$, thus $q_j=m/t_j$. We can take a tubular neighborhood of a loop $\Delta_j\subset\pi_{f^2}^{-1}(\sigma_j)$, homeomorphic to $S^1\times[-1,1]$. Then the map $f^{t_j}$ on this neighborhood is conjugate to
$$S^1\times[-1,1]\rightarrow S^1\times[-1,1]$$
$$(e^{i\theta},r)\mapsto(e^{i(\theta+2\pi\frac{p_j}{q_j})},-r).$$

The lift of $S_{\delta_j}$ on $S^1\times[-1,1]$ is the path $\{(e^{i\theta},1)|0\leq\theta\leq\frac{2\pi}{m/t_j}\}$ from $(e^{i0},1)$ to $(e^{ i\frac{2\pi}{q_j}},1)$. By lemma \ref{class1}, $m$ should be an odd integer, then $q_j$ is odd and $\gcd\{2p_j,q_j\}=1$. Since $f^{2t_j}$ has rotation number $2p_j/q_j$, suppose $u(2p_j)-vq_j=1$, we have $f^{2ut_j}(e^{i0},1)=(e^{i\frac{2\pi}{q_j}},1)$. By definition we have $\omega_f(S_{\delta_j})=2ut_j$. From $p_j(2ut_j)-vm=t_j$ we have $t_j=\gcd\{\omega_f(S_{\delta_j}),m\}$. From $(2u)p_j-vq_j=1$ we have $p_j=(\frac{\omega_f(S_{\delta_j})}{t_j})^{-1}\mod q_j$.

For $\sigma_{k+l}$, $l=1,\cdots,s$, $\iota_l$ has valency $[t_{k+l},q_{k+l},u]$ we pick $x\in\sigma_{k+l}$ choose $\tilde x\in\pi_{f^2}^{-1}(x)$. Then there are $n$ points in the orbit of $\tilde x$ under the action of $f$ and there are $n/t_{k+l}$ points in the orbit of $\tilde x$ under the action of $f^{t_{k+l}}$, thus $q_{k+l}=n/t_{k+l}$. We can choose a sufficiently small neighborhood of a circle $\Delta_{k+l}\subset\pi_{f^2}^{-1}(\sigma_{k+l})$, homeomorphic to $S^1\times[-1,1]$. Then the map $f^{t_{k+l}}$ on this neighborhood is conjugate to
$$S^1\times[-1,1]\rightarrow S^1\times[-1,1]$$
$$(e^{i\theta},r)\mapsto(e^{i(\theta+2\pi\frac{p_{k+l}}{q_{k+l}})},-r).$$
The lift of $\iota_l=\pi_{\bar f}(\sigma_{k+l})$ on $S^1\times[-1,1]$ is the path $\{(e^{i\theta},0)|0\leq\theta\leq \frac{2\pi}{q_{k+l}}\}$ from $(e^{i0},0)$ to $(e^{ i\frac{2\pi}{q_{k+l}}},0)$. Suppose $up_{k+l}-vq_{k+l}=1$, we have $f^{ut_{k+l}}((e^{i0},0))=(e^{i\frac{2\pi}{q_{k+l}}},0)$. By definition we have $\omega_f(\iota_l)=ut_{k+l}$. From $p_{k+l}(ut_{k+l})-vn=t_{k+l}$ we have $t_{k+l}=\gcd\{\omega_f(\iota_{l}),n\}$. From $up_{k+l}-vq_{k+l}=1$ we have $p_{k+l}=(\frac{\omega_f({\iota_l})}{t_{k+l}})^{-1} \mod q_{k+l}$.
\end{proof}

Based on the proof of the above lemma, we also have:
\begin{lemma}\label{data}
The data of $f$ and the pre-data of $f$ based on co-oriented one-sided loops $\{\iota_1,\cdots,\iota_s\}$ satisfy: $\omega_f(S_{q_i})=2\theta_i$, $i=1,2,\cdots,b$; $\omega_f(S_{\delta_j})=2\eta_j$, $j=1,2,\cdots,k$; $\omega_f(\iota_l)=\eta_{k+l} \mod m$, $l=1,2,\cdots,s$.
\end{lemma}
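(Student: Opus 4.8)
The plan is to exploit the factorization $\pi_f=\pi_{\bar f}\circ\pi_{f^2}$ together with the fact that $\pi_{\bar f}\colon\Sigma_g/f^2\to\Sigma_g/f$ is a $2$-fold branched covering, ramified only along $\Fix(\bar f)=\sqcup_{j=1}^k\sigma_j$ (by lemma~\ref{fixed points2} the orientation reversing involution $\bar f$ has no isolated fixed points, so its only ramification locus is this union of circles). The tool I would isolate first is the following lifting principle: if $\gamma\subset\Sigma_g/f-B_f$ admits a \emph{closed} lift $\tilde\gamma\subset\Sigma_g/f^2-B_{f^2}$ with $\pi_{\bar f}\circ\tilde\gamma$ traversing $\gamma$ exactly once, then $\omega_f(\gamma)=2\,\omega_{f^2}(\tilde\gamma)\bmod n$. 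Indeed, the $\pi_{f^2}$-lift of $\tilde\gamma$ starting at a chosen $\tilde x$ terminates at $(f^2)^{r'}(\tilde x)=f^{2r'}(\tilde x)$ with $r'=\omega_{f^2}(\tilde\gamma)\bmod m$; since $\pi_f=\pi_{\bar f}\circ\pi_{f^2}$ and $\tilde\gamma$ projects once onto $\gamma$, uniqueness of path lifting (valid away from the branch sets) identifies this same path with the $\pi_f$-lift of $\gamma$, whose terminal power is therefore $2r'$, giving $\omega_f(\gamma)=2r'\bmod n$.

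With this in hand the three identities split into two regimes. For $S_{q_i}$ the covering $\pi_{\bar f}$ is unramified near $\tilde q_i$ (an interior isolated branch point, hence not on $\Fix(\bar f)$), so $S_{q_i}$ lifts homeomorphically to the component $S_{\tilde q_i}$ lying in $\Sigma^+_{h,k+s}$; the principle yields $\omega_f(S_{q_i})=2\,\omega_{f^2}(S_{\tilde q_i})=2\theta_i$. For $S_{\delta_j}$, although $\pi_{\bar f}$ is ramified along $\sigma_j$, a loop parallel to the boundary $\delta_j$ lifts to a loop parallel to $\sigma_j$ (the ramification sits on $\sigma_j$ itself, not on the parallel copy), again projecting once; hence $\omega_f(S_{\delta_j})=2\,\omega_{f^2}(\sigma_j)=2\eta_j$. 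I would also cross-check $S_{\delta_j}$ against the computation already present in lemma~\ref{type}, where the same integer $u$ with $u(2p_j)-vq_j=1$ simultaneously produces $\omega_f(S_{\delta_j})=2ut_j$ and, via lemma~\ref{index} applied to $f^2_+$ on $\Delta_j$ (whose rotation number is $2p_j/q_j$), $\eta_j=ut_j$.

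The loop $\iota_l$ is the genuinely different case, and I expect it to be the main obstacle. Because $\iota_l$ is one-sided, its preimage $\sigma_{k+l}=\pi_{\bar f}^{-1}(\iota_l)$ is a single circle that double-covers $\iota_l$, so $\sigma_{k+l}$ does \emph{not} project onto $\iota_l$ with multiplicity one; instead $\pi_{\bar f}\circ\sigma_{k+l}$ represents $\iota_l^2$. The remedy is to apply the lifting principle to $\gamma=\iota_l^2$, whose degree-one closed lift is precisely $\sigma_{k+l}$: this gives $\omega_f(\iota_l^2)=2\,\omega_{f^2}(\sigma_{k+l})=2\eta_{k+l}\bmod n$. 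Since $\omega_f$ is a homomorphism, $\omega_f(\iota_l^2)=2\,\omega_f(\iota_l)$, so $2\,\omega_f(\iota_l)\equiv 2\eta_{k+l}\pmod{2m}$, and cancelling the common factor $2$ yields $\omega_f(\iota_l)\equiv\eta_{k+l}\pmod m$.

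The delicate point throughout, and especially in this last step, is the bookkeeping of orientations: the cancellation of the factor $2$ is legitimate only once one knows that the orientation of $\iota_l$ defining $\omega_f(\iota_l)$ and the orientation of $\sigma_{k+l}$ defining $\eta_{k+l}$ are the compatible ones induced from $\Sigma^+_{h,k+s}$, which is exactly what the co-orientation convention and the standing choice that $\iota_l$ inherits its orientation from $\Sigma^+_{h,k+s}$ were arranged to secure. To keep the signs honest I would, as a safeguard, redo $\iota_l$ through the explicit neighborhood model of lemma~\ref{type}: there $\omega_f(\iota_l)=u\,t_{k+l}$ with $u\equiv p_{k+l}^{-1}\pmod{q_{k+l}}$ and $q_{k+l}=2m/t_{k+l}$, while lemma~\ref{index} applied to $f^2_+$ gives $\eta_{k+l}=u'\,t_{k+l}$ with $u'\equiv p_{k+l}^{-1}\pmod{m/t_{k+l}}$; since $m/t_{k+l}$ divides $q_{k+l}$, reduction forces $u\equiv u'\pmod{m/t_{k+l}}$, whence $u\,t_{k+l}\equiv u'\,t_{k+l}\pmod m$, recovering $\omega_f(\iota_l)\equiv\eta_{k+l}\pmod m$ independently of the homotopy argument.
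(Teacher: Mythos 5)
Your proof is correct, but your primary argument is not the one the paper uses. The paper offers no standalone proof of this lemma at all: the phrase ``Based on the proof of the above lemma'' means it is read off from the explicit $S^1\times[-1,1]$ local-model computations in the proof of lemma~\ref{type}, where $\omega_f(S_{\delta_j})=2ut_j$ with $u(2p_j)-vq_j=1$ and $\omega_f(\iota_l)=ut_{k+l}$ with $up_{k+l}-vq_{k+l}=1$ were established, combined with lemma~\ref{index} applied to $f^2_+$ to identify $\eta_j=ut_j$ and $\eta_{k+l}\equiv ut_{k+l}\bmod m$. Your two ``safeguard'' cross-checks are precisely this argument, so you have in effect reproduced the paper's proof as a verification layer. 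Your main route, however, is genuinely different and arguably cleaner: the lifting principle extracted from the factorization $\pi_f=\pi_{\bar f}\circ\pi_{f^2}$, proved by uniqueness of path lifting for the free $Z_n$-action on $\Sigma_g-W_f$, handles all three identities uniformly without rotation numbers or Bezout identities --- degree-one lifts settle $S_{q_i}$ and $S_{\delta_j}$ (for the latter you correctly note the parallel copy of $\sigma_j$ avoids the ramification locus and is homotopic to $\sigma_j$ in the complement of $B_{f^2}$), and the $\iota_l^2$ trick with cancellation of the common factor $2$ settles the one-sided case, making transparent \emph{why} that identity holds only mod $m$ rather than mod $n$. What the paper's route buys is economy (the local models are needed anyway for theorem~\ref{only}); what yours buys is independence from the choice of nice parametrizations and a conceptual explanation of the factor $2$ and the modulus. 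Your orientation bookkeeping --- that the co-orientation convention and the standing choice that $\iota_l$ inherits its orientation from $\Sigma^+_{h,k+s}$ make $\pi_{\bar f}\colon\sigma_{k+l}\to\iota_l$ orientation compatible, so that $\pi_{\bar f}\circ\sigma_{k+l}$ represents $\iota_l^2$ with the correct sign --- is exactly the point that needs care, and you have it right.
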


\begin{proof}[Proof of theorem \ref{only}]
Suppose that after cutting $\Sigma_g/f$ and $\Sigma_g'/f'$ along the chosen
two sets of co-oriented one-sided loops we obtain $\Sigma_{h,k+s}^+$, $\Sigma_{h,k+s}'^+$ and the same data $[h,2m;(2\theta_1,\cdots,2\theta_b);(2\eta_1,\cdots,2\eta_k);(\zeta_1,\cdots,\zeta_s)]$.

By theorem \ref{split}, we have $f^2_+\colon\thinspace\Sigma_{\tau,e}^+\rightarrow\Sigma_{\tau,e}^+$ and $f'^2_+\colon\thinspace\Sigma_{\tau,e}'^+\rightarrow\Sigma_{\tau,e}'^+$. Because $f$ and $f'$ have the same data, by lemma \ref{data}, $f^2_+$ and $f'^2_+$ also have the same data. By lemma \ref{predata}, there exists an orientation preserving homeomorphism $h_+\colon\thinspace\Sigma_{\tau,e}^+\rightarrow\Sigma_{\tau,e}'^+$, such that $f'^2_+=h_+\circ f^2_+\circ h_+^{-1}$. Then there is an induced homeomorphism $\bar h_+\colon\thinspace\Sigma_{h,k+s}^+\rightarrow\Sigma_{h,k+s}'^+$, such that $\bar h_+\circ\pi_{f^2_+}=\pi_{f'^2_+}\circ h_+$.

For each boundary component $\sigma_j\subset\Sigma_{h,k+s}^+$, $\bar h_+(\sigma_j)$ is the boundary component of $\Sigma_{h,k+s}'^+$.
By lemma \ref{type}, we can find out the components number $t_j$ of $\pi_{f^2}^{-1}(\sigma_j)$ and the rotation number $p_j/q_j$ of the map $f^{t_j}$ restricted on a loop $\Delta_j\subset\pi_{f^2}^{-1}(\sigma_j)$. Then the map $\pi_{f^2}\colon\thinspace\Delta_j\rightarrow\sigma_j$ is an $m/t_j$-fold covering.

For convenient, we give these boundary loops $\Delta_j$, $h_+(\Delta_j)$, $\sigma_j$ and $\bar h_+(\sigma_j)$ some nice parameterization such that $f^{t_j}(e^{i\theta})=e^{i(\theta+2\pi\frac{p_j}{q_j})}$, $\pi_{f^2}(e^{i\theta})=e^{i (m/t_j)\theta}$; $f'^{t_j}(e^{i\theta})=e^{i(\theta+2\pi\frac{p_j}{q_j})}$, $\pi_{f'^2}(e^{i\theta})=e^{i (m/t_j)\theta}$.

Then we can make an isotopy of $\bar h_+$ on the tubular neighborhood of $\sigma_j$ on $\Sigma_{h,k+s}^+$ such that $\bar h_+(e^{i\theta})=e^{i\theta}$ when restricted on $\sigma_j$. The lift of this isotopy is an isotopy of $h_+$. Notice that $f'^2_+=h_+\circ f^2_+\circ h_+^{-1}$ still holds after the isotopy. Now suppose that $h_+(e^{i\theta})=e^{i\phi(\theta)}$ when restricted on $\Delta_j$, then from $\pi_{f'^2}\circ h_+=\bar h_+\circ\pi_{f^2}$ we have $e^{i (m/t_j)\phi(\theta)}=e^{i (m/t_j)\theta}$. Hence $\phi(\theta)=\theta+2\pi\frac{r}{m/t_j}$ for some integer $r$. It's easy to verify that $f'^{t_i}=h_+\circ f^{t_i}\circ h_+^{-1}$ when restricted on $h_+(\Delta_j)$. Plus that $f=(f^2)^{\frac{1-t_j}{2}}\circ f^{t_j}$, $f'=(f'^2)^{\frac{1-t_j}{2}}\circ f'^{t_j}$ and $f'^2_+=h_+\circ f^2_+\circ h_+^{-1}$ we have $f'=h_+\circ f\circ h_+^{-1}$ when restricted on the boundaries of $\Sigma_{\tau,e}'^-$.

When $x\in\Sigma_{\tau,e}^-$, $f^{-1}(x)\in \Sigma_{\tau,e}^+$. Then $h_+$ induces an orientation preserving homeomorphism
$$h_-\colon\thinspace\Sigma_{\tau,e}^-\rightarrow\Sigma_{\tau,e}'^-$$ $$x\mapsto f'\circ h_+\circ f^{-1}(x).$$

Then for $x\in\pi_{f^2}^{-1}(\sqcup_{j=1}^{k+s}\sigma_j)$, $h_-(x)=f'\circ h_+\circ f^{-1}(x)=h_+(x)$. Then we obtain a well-defined orientation preserving
homeomorphism $$h\colon\thinspace\Sigma_g\rightarrow\Sigma_\tau$$ $$x\mapsto h_+(x)$$ when $x\in\Sigma_{\tau,e}^+$; $$x\mapsto h_-(x)$$ when $x\in\Sigma_{\tau,e}^-$.

It's easy to verify that $f'=h\circ f\circ h^{-1}$.
\end{proof}

\begin{theorem}\label{reconstruct}
There is an orientation reversing periodic map of period $n=2m$ on closed orientable surface of genus $g$ with data $[h,2m;(2\theta_1,\cdots,2\theta_b);(2\eta_1,\cdots,2\eta_k);(\zeta_1,\cdots,\zeta_s)]$ if and only if the following conditions are satisfied:
 \begin{enumerate}
 \item $k+s>0$
 \item When $m$ even, $k=0$
 \item $\zeta_l$ is odd for $l=1,2,\cdots,s$
 \item $[h,m;(\theta_1,\cdots, \theta_b);(\eta_1,\cdots,\eta_k,\zeta_1,\cdots,\zeta_s)]$ is the data of an orientation preserving periodic map
 \item $g-1=m\left(2h-2+k+s+\Sigma_{i=1}^b(1-\frac{\gcd\{\theta_i,m\}}{m})\right)$.
 \end{enumerate}
\end{theorem}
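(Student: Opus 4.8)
The plan is to prove the two implications separately, treating the forward (necessity) direction as an assembly of the structural lemmas already established together with a Riemann--Hurwitz computation, and the reverse (realizability) direction as an explicit gluing construction, which I expect to be the genuine obstacle.

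For necessity, suppose $f$ is an orientation reversing periodic map of period $2m$ realizing the given data, with cutting curves $\sigma_1,\dots,\sigma_{k+s}$, components $\Sigma^\pm_{\tau,e}$, and orientation preserving map $f^2_+$ as in the constructions preceding the theorem. Conditions (1)--(4) are read off directly: condition (1) holds because if $k=s=0$ then $\bar f$ would be a fixed-point-free orientation reversing involution on $\Sigma_g/f^2$, whose quotient $\Sigma_g/f$ is non-orientable (compare Lemma \ref{class2'}), forcing a positive number of one-sided cutting loops, a contradiction; condition (2) is the contrapositive of Lemma \ref{class1}; condition (3) follows from Lemma \ref{t_i} together with Lemma \ref{type}(2), since $t_{k+l}=\gcd\{\zeta_l,2m\}$ being odd forces $\zeta_l$ odd in $Z_{2m}$; and condition (4) is exactly the statement that the pre-data of $f$ is realized by the orientation preserving map $f^2_+$ (Lemma \ref{predata}), after identifying $\eta_{k+l}\equiv\zeta_l\pmod m$ via Lemma \ref{data}.

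Condition (5) is the one real computation. I would apply the Riemann--Hurwitz identity of Theorem \ref{Nielsen12}(4) to $f^2_+\colon\Sigma^+_{\tau,e}\to\Sigma^+_{h,k+s}$, whose base has genus $h$, $b$ branch points of valencies $\theta_i$, and $k+s$ boundary loops $\sigma_j$ of valencies $\eta_j$, to obtain $2\tau-2=m(2h-2+\sum_i(1-\gcd\{\theta_i,m\}/m)+\sum_j(1-\gcd\{\eta_j,m\}/m))$. Separately, counting shows $\Sigma^+_{\tau,e}$ has $e=\sum_{j=1}^{k+s}\gcd\{\eta_j,m\}$ boundary circles (each $\sigma_j$ of orbit length $t_j=\gcd\{\eta_j,m\}$ lifts to $t_j$ circles in $\Sigma^+$ by Lemma \ref{index}), and gluing $\Sigma^+_{\tau,e}$ to $\Sigma^-_{\tau,e}$ along all $e$ boundaries gives $\chi(\Sigma_g)=2\chi(\Sigma^+_{\tau,e})$, i.e. $g-1=2\tau-2+e$. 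Substituting and noting that the $\eta_j$-terms telescope, $m\sum_j(1-\gcd\{\eta_j,m\}/m)+\sum_j\gcd\{\eta_j,m\}=m(k+s)$, yields exactly condition (5).

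For sufficiency, given data satisfying (1)--(5), condition (4) lets me invoke Theorem \ref{Nielsen12} (through the results of section \ref{orientable}) to produce an orientation preserving periodic map $f^2_+$ of period $m$ on a surface $\Sigma^+_{\tau,e}$ with quotient $\Sigma^+_{h,k+s}$ matching the pre-data, while condition (5) forces via the Euler characteristic bookkeeping above that doubling $\Sigma^+_{\tau,e}$ produces genus exactly $g$. The heart of the argument, and the step I expect to be hardest, is to assemble the orientation reversing map $f$ itself: I would take a second, orientation-reversed copy $\Sigma^-_{\tau,e}$ carrying the conjugate map, and define $f$ so that it interchanges $\Sigma^+$ and $\Sigma^-$ with $f^2|_{\Sigma^+}=f^2_+$, gluing the boundary circles of the two copies by homeomorphisms modeled on the local normal forms $(e^{i\theta},r)\mapsto(e^{i(\theta+2\pi p_j/q_j)},-r)$ of Lemma \ref{type}. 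The delicate points are that the gluing be equivariant and consistent around each orbit of boundary circles---here condition (3) (oddness of $\zeta_l$, hence of the orbit lengths $t_{k+l}$ by Lemma \ref{t_i}) and condition (2) (forcing $m$ odd whenever fixed curves are present) are exactly what make the interchanging pattern close up into a well-defined map---and that the resulting $f$ be orientation reversing of period exactly $2m$ (which holds since $f$ swaps the two halves and $f^{2m}=(f^2_+)^m=\mathrm{id}$ with no smaller period). Finally I would verify, using Lemmas \ref{type} and \ref{data}, that the reconstructed $f$ has precisely the prescribed data $[h,2m;(2\theta_i);(2\eta_j);(\zeta_l)]$, with condition (1) ensuring the construction is nonvacuous.
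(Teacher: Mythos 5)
Your proposal is correct and follows essentially the same route as the paper: necessity by reading off conditions (1)--(4) from Lemmas \ref{class2'}, \ref{class1}, \ref{t_i}, \ref{type}, \ref{predata}, \ref{data} and getting (5) from Riemann--Hurwitz plus the Euler-characteristic identity $g-1=2\tau-2+\sum_j\gcd\{\eta_j,m\}$; sufficiency by realizing the pre-data with an orientation preserving map $\psi$ of period $m$ and doubling, gluing the two copies along boundary circles by a ``square root'' of $\psi$ (the paper's map $\Phi=\psi^{\frac{1-t_j}{2}}\circ h_j^i\circ\varphi_j\circ(h_j^i)^{-1}$ with $\Phi^2=\psi$), where the oddness of the orbit lengths $t_j$ guaranteed by conditions (2) and (3) is exactly what you correctly identify as making the interchanging pattern close up.
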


\begin{proof}
The necessity of these conditions is easy to see. Here we only look at the sufficiency.

Suppose that the given data satisfy each condition listed above. And $\psi\colon\thinspace\Sigma_{\tau,e}\rightarrow\Sigma_{\tau,e}$ is an orientation preserving periodic map with data $[h,m;(\theta_1,\cdots, \theta_b);(\eta_1,\cdots,\eta_k,\zeta_1,\cdots,\zeta_s)]$.

Let $t_j$, $p_j$ and $q_j$, $j=1,2,\cdots,k+s$ be given by formulas in lemma \ref{type}. When $k>0$, $m$ is odd and $t_j=\gcd\{2\eta_j,m\}$ is odd. Since $\zeta_l$ is odd, we have $t_{k+l}=\gcd\{\zeta_l,n\}$ to be odd.

For each boundary component $\sigma_j$ of $\Sigma_{h,k+s}=\Sigma_{\tau,e}/\psi$, by lemma \ref{index}, there are $t_j$ components in $\pi_{\psi}^{-1}(\sigma_j)$. Then choose one of them, say $\Delta_j$, we have  $\pi_{\psi}^{-1}(\sigma_j)=\sqcup_{i=1}^{t_j}\psi^i(\Delta_j)$. Then there exists a parametrization $h_j\colon\thinspace S^1=\{e^{i\theta}\}\rightarrow\Delta_j$, compatible with the induced orientation of $\Delta_j$ as boundary of $\Sigma_{\tau,e}$, such that
$$h_i^{-1}\circ\psi^{t_j}\circ h_i\colon\thinspace S^1\rightarrow S^1$$
$$e^{i\theta}\mapsto e^{i(\theta+2\pi\frac{2p_j}{q_j})}$$
when $j=1,2,\cdots,k+s$.

Then we have parametrization $h_j^i\colon\thinspace S^1\rightarrow\psi^i(\Delta_j)$ with $h_j^i=\psi^i\circ h_j$.

Now we let $\varphi_j$ be the map
$$\varphi_j\colon\thinspace S^1\rightarrow S^1$$
$$e^{i\theta}\mapsto e^{i(\theta+2\pi\frac{p_j}{q_j})}$$
when $j=1,2,\cdots,k+s$.

Then we know that $\psi^{t_j}=h_j^i\circ\varphi_j^2\circ (h_j^i)^{-1}$, when restricted on $\psi^i(\Delta_j)$.

We construct a map: $$\Phi\colon\thinspace\pi_{\psi}^{-1}(\sqcup_{j=1}^{k+s}\sigma_j)\rightarrow\pi_{\psi}^{-1}(\sqcup_{j=1}^{k+s}\sigma_j)$$
$$x\mapsto\psi^{\frac{1-t_j}{2}}\circ h_j^i\circ\varphi_j\circ (h_j^i)^{-1}(x)$$ when $x\in\psi^i(\Delta_j)$, $j=1,2,\cdots,k+s$, $i=1,2,\cdots,t_j$.

Then for $x\in\psi^i(\Delta_j)$, $\Phi(x)\in\psi^{\frac{1-t_j}{2}+i}(\Delta_i)$ and we have $\Phi^2(x)=\psi^{\frac{1-t_j}{2}}\circ
h_j^{\frac{1-t_j}{2}+i}\circ\varphi_j\circ (h_j^{\frac{1-t_j}{2}+i})^{-1}\circ\psi^{\frac{1-t_j}{2}}\circ h_j^i\circ\varphi_j\circ
(h_j^i)^{-1}(x)=\psi^{\frac{1-t_j}{2}}\circ h_j^{\frac{1-t_j}{2}+i}\circ\varphi_j^2\circ (h_j^i)^{-1}(x)=\psi^{1-t_j}\circ h_j^i\circ\varphi_j^2\circ
(h_j^i)^{-1}(x)=\psi(x)$.

Make two copies of $\Sigma_{\tau,e}$, denoted by $\Sigma_{\tau,e}^+$ and $\Sigma_{\tau,e}^-$. The points of these two copies corresponding to $x\in\Sigma_{\tau,e}$ are
denoted by $(x,+)$ and $(x,-)$ respectively. Their boundaries are $(\pi_{\psi}^{-1}(\sqcup_{i=1}^{k+s}\sigma_i),+)$ and
$(\pi_{\psi}^{-1}(\sqcup_{i=1}^{k+s}\sigma_i),-)$ respectively.

Then we obtain a new surface $\Sigma_{\tau,e}^+\sqcup_{\Phi}\Sigma_{\tau,e}^-$ by gluing $(x,-)$ with $(\Phi(x),+)$ for
$x\in\pi_{\psi}^{-1}(\sqcup_{i=1}^{k+s}\sigma_i)$. We give this new closed surface an orientation such that the map
$\Sigma_{\tau,e}\rightarrow\Sigma_{\tau,e}^+\sqcup_{\Phi}\Sigma_{\tau,e}^-$ giving by $x\mapsto(x,+)$ is orientation preserving.

Now we construct a periodic map $f$ on it: $$f\colon\thinspace\Sigma_{\tau,e}^+\sqcup_{\Phi}\Sigma_{\tau,e}^-\rightarrow\Sigma_{\tau,e}^+\sqcup_{\Phi}\Sigma_{\tau,e}^-$$ $$(x,+)\mapsto
(x,-)$$ for $(x,+)$ in the interior of $\Sigma_{\tau,e}^+$; $$(x,-)\mapsto (\psi(x),+)$$ for $(x,-)$ in the interior of $\Sigma_{\tau,e}^-$;
$$\{(x,-),(\Phi(x),+)\}\mapsto \{(\Phi(x),-),(\Phi^2(x),+)\}$$ otherwise.

It's easy to verify that $f$ is a well-defined orientation reversing periodic map with the given data.
\end{proof}

\section{The equivalent relations of the data}\label{equivalent relation}
Given data $[h,2m;(2\theta_1,\cdots,2\theta_b);(2\eta_1,\cdots,2\eta_k);(\zeta_1,\cdots,\zeta_s)]$ satisfying the conditions listed in the theorem \ref{reconstruct}.

Let $d=1$ if $h>0$ and $d=\gcd\{\theta_1,\cdots,\theta_b,\eta_1,\cdots,\eta_k\}$ if $h=0$.

Suppose that $\tau$, $\rho$ and $\sigma$ are permutations of $\{1,2,\cdots,b\}$, $\{1,2,\cdots,k\}$ and $\{1,2,\cdots,s\}$.

Now we define the following equivalent relations:
\begin{tabbing}
(R0)\= $[h,2m;(2\theta_1,\cdots,2\theta_b);(2\eta_1,\cdots,2\eta_k);(\zeta_1,\cdots,\zeta_s)]$ is equivalent with\kill
(R0)\> $[h,2m;(2\theta_1,\cdots,2\theta_b);(2\eta_1,\cdots,2\eta_k);(\zeta_1,\cdots,\zeta_s)]$ is equivalent with\\
    \> $[h,2m;(2\theta_{\tau(1)},\cdots,2\theta_{\tau(b)});(2\eta_{\rho(1)},\cdots,2\eta_{\rho(k)});(\zeta_{\sigma(1)},\cdots,\zeta_{\sigma(s)})]$\\
(R1)\> $[h,2m;(2\theta_1,\cdots,2\theta_b);(2\eta_1,\cdots,2\eta_k);(\zeta_1,\cdots,\zeta_s)]$ is equivalent with\\
    \> $[h,2m;(-2\theta_1,\cdots,-2\theta_b);(-2\eta_1,\cdots,-2\eta_k);(-\zeta_1,\cdots,-\zeta_s)]$\\
(R2)\> $[h,2m;(\cdots,2\theta_i,\cdots);(\cdots,2\eta_j,\cdots);(\cdots,\zeta_l,\cdots,\zeta_s)]$ ($s>0$) is equivalent with\\
    \> $[h,2m;(\cdots,-2\theta_i,\cdots);(\cdots,2\eta_j,\cdots);(\cdots,\zeta_l,\cdots,\zeta_s+2\theta_i)]$, $1\leq i\leq b$ and\\
    \> $[h,2m;(\cdots,2\theta_i,\cdots);(\cdots,-2\eta_j,\cdots);(\cdots,\zeta_l,\cdots,\zeta_s+2\eta_j)]$, $1\leq j\leq k$ and\\
    \> $[h,2m;(\cdots,2\theta_i,\cdots);(\cdots,2\eta_j,\cdots);(\cdots,-\zeta_l,\cdots,\zeta_s+2\zeta_l)]$, $1\leq l\leq s-1$\\
(R3)\> $[h,2m;(\cdots);(\cdots);(\cdots,\zeta_{s-2},\zeta_{s-1},\zeta_s)]$ ($s\geq 3$) is equivalent with\\
    \> $[h+1,2m;(\cdots);(\cdots);(\cdots,\zeta_{s-2}+\zeta_{s-1}+\zeta_s)]$\\
(R4)\> $[h,2m;(\cdots);(\cdots);(\zeta_1,\zeta_2)]$ is equivalent with\\
    \> $[h,2m;(\cdots);(\cdots);(\zeta_1+2d,\zeta_2-2d)]$
\end{tabbing}

The main result of this section is:
\begin{theorem}\label{equivalent}
Two collections of data for the orientation reversing periodic maps determine the same conjugacy class if and only if they are equivalent under the relations (R0)$\sim$(R4).
\end{theorem}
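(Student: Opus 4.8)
The proof splits into sufficiency (equivalent data give conjugate maps) and necessity (conjugate maps have equivalent data). The unifying observation is that the data attached to a fixed map $f$ depends on exactly three choices: the labeling of the branch points $q_i$, of the boundary loops $\delta_j$, and of the one-sided loops $\iota_l$; the orientation put on the cut-open surface $\Sigma_{h,k+s}^+$; and the system of co-oriented one-sided loops $\{\iota_1,\dots,\iota_s\}$ itself. The first two are accounted for directly by (R0) and (R1), so the substance of the theorem is to understand how the data transforms when the system of one-sided loops changes.

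For the sufficiency direction I would verify that each relation preserves the conjugacy class by exhibiting the right-hand data as the data of the map built from the left-hand data; Theorem \ref{only} then forces the two maps to be conjugate. Here (R0) is a relabeling and (R1) is the reversal of the orientation chosen on $\Sigma_{h,k+s}^+$ (equivalently the interchange of the two halves $\Sigma_{\tau,e}^+$ and $\Sigma_{\tau,e}^-$ that $f$ swaps), each of which leaves $f$ itself unchanged while negating all of $\omega_f$ in the second case. The relation (R2) is realized by pushing a boundary loop $S_{q_i}$ or $S_{\delta_j}$, or another one-sided loop, across the last crosscap $\iota_s$: since traversing a one-sided loop reverses orientation, the value of $\omega_f$ on the pushed loop changes sign while $\iota_s$ absorbs twice its holonomy, which by Lemma \ref{data} is exactly the stated change. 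The relation (R3) is Dyck's homeomorphism $\mathbb{RP}^2\#\mathbb{RP}^2\#\mathbb{RP}^2\cong T^2\#\mathbb{RP}^2$, under which three crosscaps become one handle and one crosscap whose core is homologous to the sum of the three old cores; as $\omega_f$ is a homomorphism it records $\zeta_{s-2}+\zeta_{s-1}+\zeta_s$, while the new handle only raises $h$ to $h+1$ and carries no recorded datum. Finally (R4) is a slide of one crosscap around a loop of the cut-open surface: when $h>0$ a handle loop permits a slide changing a crosscap holonomy freely, while when $h=0$ the only available slides run along products of the $S_{q_i}$ and $S_{\delta_j}$, whose holonomies lie in the subgroup generated by $\{2\theta_i,2\eta_j\}$, which is why $d=\gcd\{\theta_1,\dots,\theta_b,\eta_1,\dots,\eta_k\}$ enters. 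In each case I would confirm that the move carries one valid co-oriented system to another and that Lemmas \ref{type} and \ref{data} give the asserted arithmetic.

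For necessity, suppose $f$ and $f'$ are conjugate by an orientation-preserving homeomorphism; pulling a full system of choices for $f'$ back through it produces a system of choices for $f$ giving identical data, so it suffices to compare two arbitrary data of one and the same map $f$. After using (R0) and (R1) to match labelings and the orientation of the cut-open surface, the problem reduces to the core topological statement: any two systems of co-oriented one-sided loops on $\Sigma_g/f$ are joined by a finite sequence of the elementary moves underlying (R2), (R3), (R4). I would prove this by passing to the oriented surface $\Sigma_g/f^2$ with its orientation-reversing involution $\bar f$, where a co-oriented system corresponds to a $\bar f$-invariant family of disjoint circles that, together with $\Fix(\bar f)$, separates $\Sigma_g/f^2$ into the two halves of Theorem \ref{split}. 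I would then reduce any such equivariant separating family to a normal form, using (R3) to drive $s$ down to its parity-determined minimum, (R2) to absorb the $S_{q_i},S_{\delta_j}$ holonomies, and (R4) with (R1) and (R0) to normalize the remaining one-sided values, and check that two families reaching the same normal form differ by the listed moves.

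The main obstacle is exactly this core statement, and within it the verification of (R4): one must show that the slides available to a crosscap are controlled by $d$, which requires a careful treatment of the case $h=0$ via condition (3) of Theorem \ref{Nielsen12} (the holonomies must generate $Z_m$), and one must keep the arithmetic of $\omega_f$ consistent across the passage between the level-$m$ data on $\Sigma_{h,k+s}^+$ and the level-$2m$ data on $\Sigma_g/f$ recorded in Lemma \ref{data}. The Dyck move (R3) also demands care, since the homology class of the new crosscap core must be tracked through the homeomorphism to justify the additive formula, and one must confirm that the nonorientable genus $2h+s$ is preserved so that the move remains inside the realizable data of Theorem \ref{reconstruct}.
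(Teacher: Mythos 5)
Your sufficiency direction follows the paper's own route: each of (R0)--(R4) is realized by changing the system of co-oriented one-sided loops attached to a fixed $f$, and Theorem \ref{only} is then invoked; this is exactly Lemmas \ref{R01}, \ref{R2}, \ref{R3} and \ref{R4} of the paper (the paper proves (R3) slightly differently from your homology-tracking plan, by combining (R2) with Lemma \ref{balance}). One caveat: for (R4) with $h>0$, your phrase ``a handle loop permits a slide changing a crosscap holonomy freely'' hides the real work and overstates the effect. The change is only by multiples of $2d$ (in particular the parity of the $\zeta$'s is preserved), and producing a simple loop $\gamma$ disjoint from $\iota_1,\iota_2$, not separating them, with $\omega_f(\gamma)=2$ is nontrivial: the paper does it in Lemma \ref{operation3} by cutting $\Sigma_{h,k+2}^+$ along a nonseparating loop, realizing a periodic map with auxiliary boundary values $(1,-1)$ via Theorem \ref{Nielsen12}, regluing, and transporting the loop back through the uniqueness statement of Lemma \ref{predata}. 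An arbitrary handle loop has no reason to carry $\omega_f$-value $2$.

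The genuine gap is in your necessity direction. You reduce it (correctly, as the paper does) to comparing two data of one and the same map $f$, but then you rest everything on the unproven ``core statement'' that any two co-oriented systems are joined by the elementary moves, to be established by reduction to a normal form. The circularity is in your last step: ``check that two families reaching the same normal form differ by the listed moves'' is automatic; what actually has to be proved is that two systems attached to the same $f$ reach the \emph{same} normal form, i.e.\ that the normal form is an invariant of $f$, and your plan contains no argument for this. That is exactly the hard part, and the paper never proves (nor needs) your core statement; instead it proves two invariance lemmas about the data itself. Lemma \ref{balance} shows that $\zeta_1+\cdots+\zeta_s \bmod n$ is independent of the choice of co-oriented system, by exhibiting $K=\pi_{\bar f}\bigl(\Sigma_{h^{(1)},k+s_1}^{+}\cap\Sigma_{h^{(2)},k+s_2}^{-}\bigr)$ as a subcomplex whose boundary is made of the two systems, so that $\omega_f(\partial K)=0$. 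Lemma \ref{Hang-1} shows, for $s=2$, that every one-sided loop $\iota$ satisfies $\omega_f(\iota)=\pm\omega_f(\iota_1)+2rd$ or $\pm\omega_f(\iota_2)+2rd$, by an intersection-reduction argument against $\iota_1\sqcup\iota_2$. These two lemmas, together with the (R3)-reduction of $s$ to at most $2$ and the case theorems \ref{s=0}, \ref{s=1}, \ref{s=2}, are what pin down the full orbit of the data. Without a counterpart of these invariance statements, your argument cannot exclude, for instance, two crosscap systems for the same $f$ whose $\zeta$-values differ by a shift not divisible by $2d$, so the necessity direction remains unestablished.
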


The proof of the above theorem is lengthy, hence we decompose it into the the following theorems \ref{s=0}, \ref{s=1} and \ref{s=2}.

In this section we keep using the notations of section \ref{new data}:

Let $f$ be the orientation reversing periodic map on $\Sigma_g$ which is a representative of the conjugacy class determined by data $[h,2m;(2\theta_1,\cdots,2\theta_b);(2\eta_1,\cdots,2\eta_k);(\zeta_1,\cdots,\zeta_s)]$. Let $\iota_l\subset\Sigma_g/f-B_f$, $l=1,\cdots,s$ be the corresponding co-oriented one-sided loops and when we cut $\Sigma_g/f$ along $\sqcup_{l=1}^s\iota_l$ we get oriented surface $\Sigma_{h,k+s}^+$.

\begin{lemma}\label{R01}
Two collections of data which are equivalent under (R0) and (R1) determine the same conjugacy class.
\end{lemma}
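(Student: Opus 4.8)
The plan is to show that both relations (R0) and (R1) amount to nothing more than a change in the auxiliary choices made while reading off the data from a single fixed map, so that the transformed data is again a legitimate data of that \emph{same} map; Theorem \ref{only} then immediately yields that the two data sets determine the same conjugacy class. Concretely, let $D=[h,2m;(2\theta_1,\cdots,2\theta_b);(2\eta_1,\cdots,2\eta_k);(\zeta_1,\cdots,\zeta_s)]$ and let $f$ be a representative of the conjugacy class it determines, with chosen branch points $q_1,\cdots,q_b$, boundary loops $\delta_1,\cdots,\delta_k$, co-oriented one-sided loops $\iota_1,\cdots,\iota_s$, and an orientation on the cut surface $\Sigma_{h,k+s}^+$. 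For each relation I would exhibit a new set of these choices, still for the same $f$, whose associated data is exactly the (R0)- or (R1)-transform $D'$ of $D$. Since negation and permutation preserve conditions (1)--(5) of Theorem \ref{reconstruct}, there is a map $f'$ with data $D'$; and as $f$ itself also has data $D'$, Theorem \ref{only} forces $f'$ and $f$ to be conjugate.

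For (R0), the three entry groups of the data are indexed by the families $\{q_i\}$, $\{\delta_j\}$ and $\{\iota_l\}$, whose enumeration is an arbitrary choice. Relabeling these families by the permutations $\tau$, $\rho$, $\sigma$ leaves the loops, the homomorphism $\omega_f$, and hence the map $f$ untouched; it merely permutes the entries of $D$ into the order displayed in $D'$. In particular the reordered one-sided loops remain co-oriented. Thus $D'$ is a data of $f$ and we conclude as above.

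For (R1), recall that the signs of all entries of $D$ are determined by the orientation chosen on $\Sigma_{h,k+s}^+$, which induces orientations on every boundary loop $S_{q_i}$, $S_{\delta_j}$ and on every one-sided loop $\iota_l$; the intrinsic homomorphism $\omega_f$ does not itself depend on any orientation. If I reverse this chosen orientation, the induced orientation on each $S_{q_i}$ and $S_{\delta_j}$ reverses, so by Lemma \ref{data} their values become $-2\theta_i$ and $-2\eta_j$. To keep the one-sided loops co-oriented with respect to the reversed orientation I correspondingly reverse the parametrization $\{e^{i\theta}\}$ of each $\iota_l$, which replaces $\zeta_l=\omega_f(\iota_l)$ by $-\zeta_l$. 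The result is precisely the data $D'$ of (R1), again a data for the unchanged map $f$.

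The step I expect to require the most care is the co-orientation bookkeeping in (R1): one must check that, after reversing the orientation of $\Sigma_{h,k+s}$, the reparametrized loops $\iota_l$ still satisfy the co-orientation condition (orientable cut surface with matching induced orientations), and that the simultaneous reversal is globally consistent with the lift to $\Sigma_g/f^2$ and the two pieces $\Sigma_{h,k+s}^{\pm}$, which carry opposite induced orientations. Once this consistency is verified, the sign flips in all three groups follow uniformly and Theorem \ref{only} closes the argument.
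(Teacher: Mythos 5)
Your proposal is correct and follows essentially the same route as the paper: (R0) is absorbed by relabeling the branch points, boundary components and one-sided loops, and (R1) is realized by simultaneously reversing the orientations of the co-oriented loops (equivalently, of the cut surface $\Sigma_{h,k+s}^+$), which flips the sign of every $\omega_f$-value while preserving co-orientation. Your extra framing via Theorem \ref{only} just makes explicit the logic the paper leaves implicit in the phrase ``determine the same conjugacy class.''
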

\begin{proof}
The order of branch points, boundary components and co-oriented one-sided loops do not really matter. And when we reverse the orientations of each $\iota_l$ we get a new set of co-oriented one-sided loops which change the sign of $\omega_f$ value in the data.
\end{proof}

In particular, when $s=0$, we have:
\begin{theorem}\label{s=0}
Given two orientation reversing periodic maps $f$ on $\Sigma_g$ and $f'$ on $\Sigma_g'$,
if $\Sigma_g/f$ and $\Sigma_g'/f'$ are both orientable, then $f$ and $f'$ are conjugate to each other if and only if they
have data which are equivalent under (R0) and (R1).
\end{theorem}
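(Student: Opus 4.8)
The plan is to reduce the whole statement to the orientation-preserving theory of Section~\ref{orientable} applied to the map $f^2_+$. Recall that when $\Sigma_g/f$ is orientable we have $s=0$, so the data carries no $\zeta$-part and, by Lemma~\ref{data}, is just the pre-data $[h,m;(\theta_1,\cdots,\theta_b);(\eta_1,\cdots,\eta_k)]$ with every entry doubled. Since the doubling map $Z_m\to Z_{2m}$, $x\mapsto 2x$, is injective and commutes with both reordering and negation, two such data are equivalent under (R0) and (R1) if and only if the underlying pre-data are. Hence it suffices to show that $f$ and $f'$ are conjugate exactly when their pre-data, i.e.\ the orientation-preserving data of $f^2_+$ and $f'^2_+$, agree up to reordering and a simultaneous sign change.

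The \emph{if} direction is immediate from Lemma~\ref{R01} together with Theorem~\ref{only}. If the data of $f$ and of $f'$ are equivalent under (R0) and (R1), then, by the mechanism in the proof of Lemma~\ref{R01} (these moves realize a reordering of the branch points and boundary loops together with a reversal of the chosen orientation of $\Sigma^+_{h,k}$, all for one fixed map), the data of $f'$ is itself a legitimate data of $f$ for a suitable choice. Thus $f$ and $f'$ share a common data, and Theorem~\ref{only} gives that they are conjugate.

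For the \emph{only if} direction I would start from an orientation-preserving homeomorphism $h\colon\Sigma_g\to\Sigma_g'$ with $f'\circ h=h\circ f$ and push it down the tower. Since $f'^2\circ h=h\circ f^2$, the map $h$ descends to $\tilde h\colon\Sigma_g/f^2\to\Sigma_g'/f'^2$ with $\tilde h\circ\pi_{f^2}=\pi_{f'^2}\circ h$; because $\pi_{f^2}$ and $\pi_{f'^2}$ are orientation-preserving quotients by the orientation-preserving groups $\langle f^2\rangle$ and $\langle f'^2\rangle$, the map $\tilde h$ is orientation-preserving, and it intertwines the involutions $\bar f$ and $\bar{f'}$. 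Hence $\tilde h$ carries $\Fix(\bar f)$ to $\Fix(\bar{f'})$ and the two components of the cut surface to the two components on the primed side. I would then split into two cases. If $\tilde h(\Sigma^+_{h,k})=\Sigma'^+_{h,k}$, then $h$ restricts to an orientation-preserving conjugacy $\Sigma^+_{\tau,e}\to\Sigma'^+_{\tau,e}$ from $f^2_+$ to $f'^2_+$. If instead $\tilde h(\Sigma^+_{h,k})=\Sigma'^-_{h,k}$, then $h$ restricts to an orientation-preserving conjugacy $\Sigma^+_{\tau,e}\to\Sigma'^-_{\tau,e}$ from $f^2_+$ to $f'^2_-$; composing with $f'\colon\Sigma'^-_{\tau,e}\to\Sigma'^+_{\tau,e}$, which conjugates $f'^2_-$ to $f'^2_+$ but reverses the induced orientation, yields an orientation-\emph{reversing} conjugacy from $f^2_+$ to $f'^2_+$. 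In either case, descending to the quotients and using the naturality of $\omega$, the conjugacy takes the boundary loops $S_{\tilde q_i}$ and $\sigma_j$ to their counterparts on the primed side and transforms $\omega_{f^2}$ to $\pm\,\omega_{f'^2}$, the sign being $+$ in the first case and $-$ in the second. By Lemma~\ref{index} this means the pre-data of $f$ and $f'$ agree up to reordering (R0) in the first case and up to reordering and a simultaneous sign change (R0),(R1) in the second, which is what we wanted.

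The main obstacle I anticipate is keeping the orientation bookkeeping honest: one must verify that $\tilde h$ is genuinely orientation-preserving, that $f'$ really does conjugate $f'^2_-$ to $f'^2_+$ while reversing the induced orientation (so that the two cases yield exactly (R0) and (R0)+(R1) respectively), and that an orientation-reversing conjugacy of orientation-preserving periodic maps corresponds precisely to negating every $\omega$-value in the data. Once these orientation accountings are pinned down, the conclusion follows from the completeness of the orientable data established in Section~\ref{orientable} (Lemmas~\ref{index} and~\ref{valency} and Theorem~\ref{Nielsen11}).
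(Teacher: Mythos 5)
Your proposal is correct and matches the paper's intended argument: the paper presents this theorem as the immediate $s=0$ specialization of Lemma~\ref{R01} (which, with Theorem~\ref{only}, gives your ``if'' direction), and the converse you supply---descending the conjugacy to $\Sigma_g/f^2$, checking whether the two sides $\Sigma_{h,k}^{\pm}$ are preserved or swapped, and observing that an orientation-reversing conjugacy negates every $\omega$-value---is precisely the bookkeeping the paper leaves implicit and spells out in the analogous $s=1$ case (Theorem~\ref{s=1}). One small point in your favor: you correctly note that for $s=0$ the relation (R1) is realized by reversing the chosen orientation of $\Sigma_{h,k}^{+}$ (equivalently, exchanging the roles of $\Sigma_{h,k}^{+}$ and $\Sigma_{h,k}^{-}$), which is needed here since the loop-reversal mechanism in the paper's proof of Lemma~\ref{R01} is vacuous when there are no one-sided loops.
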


When $\Sigma_g/f$ is non-orientable, the choosing of one-sided loops is kind of arbitrary. Therefore, the cases of $s>0$ are a little complicated. In order to prove that the later collection of data in R(2), R(3) and R(4) determine the same conjugacy class as the former collection of data, we need to prove that we can choose the co-oriented one-sided loops on $\Sigma_g/f-B_f$ differently to obtain the later collection of data.

\begin{lemma}\label{R2}
Two collections of data which are equivalent under the relation (R2) determine the same conjugacy class.
\end{lemma}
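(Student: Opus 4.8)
The plan is to prove the sharper statement that the two collections of data appearing in each case of (R2) are both the data of one and the same map $f$, computed with respect to two different choices of co-oriented one-sided loops (this uses $s>0$, so that a loop $\iota_s$ is available). Once this is established, theorem \ref{only} (uniqueness) together with theorem \ref{reconstruct} (existence) shows that each of the two data determines exactly the conjugacy class of $f$, hence they determine the same class. So I fix a representative $f$ on $\Sigma_g$ realizing the first data, together with its co-oriented loops $\iota_1,\cdots,\iota_s$ and the orientable cut surface $\Sigma_{h,k+s}^+$, and for each of the three moves I exhibit an alternative co-oriented loop system, differing from the old one only in $\iota_s$ (and in the third move also in $\iota_l$), that produces the second data.

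The geometric mechanism is uniform: each move is realized by pushing one object $c\subset\Sigma_g/f-B_f$ once across the crosscap $\iota_s$, where $c$ is the branch loop $S_{q_i}$ in the first move, the boundary loop $S_{\delta_j}$ in the second, and the one-sided loop $\iota_l$ ($l\le s-1$) in the third. Because the regular neighborhood of $\iota_s$ is a M\"{o}bius band, dragging $c$ around its core and back reverses the local orientation of $c$, hence reverses the orientation that $\Sigma_{h,k+s}^+$ induces on the corresponding boundary loop; this flips the sign of the entry $\omega_f(c)$ in the data. The same isotopy forces $\iota_s$ to be rerouted to a new one-sided loop $\iota_s'$ with $[\iota_s']=[\iota_s]+\epsilon\,[c]$ in $H_1(\Sigma_g/f-B_f)$, where $\epsilon=1$ when $c$ is two-sided ($S_{q_i}$ or $S_{\delta_j}$) and $\epsilon=2$ when $c=\iota_l$ is one-sided, the factor $2$ being $[\partial N(\iota_l)]=2[\iota_l]$. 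Applying $\omega_f$ gives $\omega_f(\iota_s')=\zeta_s+\epsilon\,\omega_f(c)$, and translating through lemma \ref{data}, namely $\omega_f(S_{q_i})=2\theta_i$, $\omega_f(S_{\delta_j})=2\eta_j$ and $\omega_f(\iota_l)=\zeta_l$, yields exactly the three prescribed changes $(2\theta_i,\zeta_s)\mapsto(-2\theta_i,\zeta_s+2\theta_i)$, $(2\eta_j,\zeta_s)\mapsto(-2\eta_j,\zeta_s+2\eta_j)$ and $(\zeta_l,\zeta_s)\mapsto(-\zeta_l,\zeta_s+2\zeta_l)$, all other entries and the genus $h$ staying fixed.

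The step I expect to be the main obstacle is verifying, rigorously and simultaneously, the two effects of a single push, that the co-orientation on $c$ reverses and that $[\iota_s]$ gains $\epsilon[c]$, while confirming that the new system is again co-oriented (its complement is orientable of the same genus $h$ and admits a compatible orientation). I would carry this out in the oriented double cover $\pi_{\bar f}\colon\thinspace\Sigma_g/f^2\rightarrow\Sigma_g/f$, where a co-oriented loop system is the same information as a choice of positive half $\Sigma^+$ bounded by $\Fix(\bar f)$ and the lifts $\pi_{\bar f}^{-1}(\iota_l)$. In this model the push amounts to moving the neighborhood of the branch point $\tilde q_i$ (respectively of the fixed curve $\sigma_j$ or of $\sigma_{k+l}$) from $\Sigma^+$ across to $\Sigma^-$ along a band running to $\sigma_{k+s}$; since $\bar f$ is orientation-reversing and interchanges the two halves, the boundary orientation inherited from the new positive half is reversed, which is precisely the sign flip, while the rerouted cut curve projects to $\iota_s'$ with the asserted homology class. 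The bookkeeping of the factor $\epsilon$ and the check that the rerouted curves can still be co-oriented are the only delicate points; everything else is a direct homological computation through $\omega_f$.
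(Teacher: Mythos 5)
Your proposal is correct and follows essentially the same route as the paper: fix one map $f$ realizing the first data, replace $\iota_s$ by a new one-sided loop obtained by absorbing a parallel copy of the relevant object (branch loop $S_{q_i}$, boundary loop $S_{\delta_j}$, or $\partial N(\iota_l)$, whence the factor $2$), check that the new system is co-oriented with the orientation flipped only at that object, and conclude via theorem \ref{only}. Your crosscap-pushing description and the double-cover verification are just a dual phrasing of the paper's band-sum $\iota_s\cdot\alpha\cdot\gamma\cdot\bar{\alpha}$, with somewhat more attention paid to the co-orientation check that the paper dismisses as easy.
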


\begin{proof}
For boundary component $S_{q_i}$ of $\Sigma_g/f-N(B_f)$, let $\gamma$ be the loop which is parallel with $S_{q_i}$ and $\omega_f(\gamma)=\omega_f(S_{q_i})=2\theta_i$. Join the point $x=\iota_s(0)$ to a point $y\in \gamma(0)$ by a simple path $\alpha\subset\Sigma_g/f-N(B_f)-\sqcup_{l=1}^{s-1}\iota_l$.

A small isotopy of the loop $\iota_s\cdot\alpha\cdot\gamma\cdot\bar{\alpha}$ supported on the tubular neighborhood of $\alpha$ would give us a new simple one-sided loop $\iota_s'$. It's easy to see that $\{\iota_1,\cdots,\iota_{s-1},\iota_s'\}$ is co-oriented and induces the same orientations as $\{\iota_1,\cdots,\iota_{s-1},\iota_s\}$ does on all boundary components of $\Sigma_g/f-N(B_f)$ except for $S_{q_i}$. Hence $\omega_f$ would change sign at $S_{q_i}$ and $\omega_f(\iota_s')=\omega_f(\iota_s)+\omega_f(\gamma)$, which gives us the later collection of data in the first case of (R2).

The second case of (R2) is quite the same as the first case. The third case of (R2) is similar, we only need to set $\gamma$ to be the boundary of a tubular neighborhood of $\iota_l$, $1\leq l\leq s-1$. Then we isotope $\iota_s\cdot\alpha\cdot\gamma\cdot\bar{\alpha}$ in the tubular neighborhood of $\alpha$ and get a new one-side simple loop $\iota_s'$. Then $\{\iota_1,\cdots,\bar{\iota_l},\cdots,\iota_{s-1},\iota_s'\}$ would be co-oriented. Since $\gamma$ goes around $\iota_l$ twice, we have $\omega_f(\iota_s')=\omega_f(\iota_s)+2\omega_f(\iota_l)$. Then we have the later collection of data in the third case of (R2).
\end{proof}

\begin{lemma}\label{balance}
If two collections of data $[h,2m;(2\theta_1,\cdots,2\theta_b);(2\eta_1,\cdots,2\eta_k);(\zeta_1^{(i)},\cdots,\zeta_{s_i}^{(i)})]$, $i=1,2$ determine the same conjugacy class, then $\zeta_1^{(1)}+\cdots+\zeta_{s_1}^{(1)}=\zeta_1^{(2)}+\cdots+\zeta_{s_2}^{(2)} \mod n$.
\end{lemma}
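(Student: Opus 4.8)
The plan is to realize both data sets by a single map and then recover $\sum_l \zeta_l$ as the value of $\omega_f$ on a homology class that is forced by the remaining data. Since the two collections determine the same conjugacy class, there is an orientation-preserving homeomorphism $h$ with $f' = h\circ f\circ h^{-1}$; it descends to $\bar h\colon \Sigma_g/f \to \Sigma_g'/f'$ carrying $B_f$ to $B_{f'}$ and the branch and boundary loops to their primed counterparts. Pulling the second loop system back through $\bar h$, I may assume both data come from one map $f$ on $\Sigma_g$ equipped with two co-oriented one-sided systems $\{\iota_1,\dots,\iota_s\}$ and $\{\iota_1',\dots,\iota_s'\}$ on $\Sigma_g/f-B_f$. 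That the two systems have the same cardinality $s$ follows because $2h+s$ is a conjugacy invariant by condition (5) of Theorem \ref{reconstruct} while $h$ is the same in both data.

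The key step is the fundamental surface relation in $\pi_1(\Sigma_g/f-B_f)$. Presenting $\Sigma_g/f$ as an orientable genus-$h$ part with $s$ crosscaps carrying the $\iota_l$, together with the $b+k$ holes bounded by the $S_{q_i}$ and $S_{\delta_j}$, the product of all boundary loops is freely homotopic to $\prod_{h'}[x_{h'},y_{h'}]\prod_l \iota_l^2$. Abelianizing and discarding the commutators gives, in $H_1(\Sigma_g/f-B_f;\mathbb{Z})$, the identity
\[ \sum_{i=1}^b [S_{q_i}] + \sum_{j=1}^k [S_{\delta_j}] = 2\sum_{l=1}^s [\iota_l]. \]
Applying $\omega_f$ and using $\omega_f(S_{q_i})=2\theta_i$, $\omega_f(S_{\delta_j})=2\eta_j$, $\omega_f(\iota_l)=\zeta_l$ (Lemma \ref{data}) yields $2\sum_l\zeta_l \equiv 2\sum_i\theta_i + 2\sum_j\eta_j \pmod n$, so $\sum_l\zeta_l$ is pinned modulo $m$. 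Moreover each $\zeta_l$ is odd by condition (3) of Theorem \ref{reconstruct} and $s$ agrees for the two systems, so $\sum_l\zeta_l\equiv s\pmod 2$ as well.

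To upgrade from modulo $m$ to modulo $n$, I would argue at the level of the homology class rather than its image. Because $\Sigma_g/f-B_f$ has nonempty boundary, $H_1(\Sigma_g/f-B_f;\mathbb{Z})$ is free abelian, hence torsion-free. The right-hand class $c:=\sum_i[S_{q_i}]+\sum_j[S_{\delta_j}]$ is intrinsic to $f$ (built from loops around the branch points and the boundary curves of $\Sigma_g/f$) and, once the boundary orientations are fixed to match the recorded values $2\theta_i,2\eta_j$, is common to both systems. In a torsion-free group the equation $2x=c$ has at most one solution, so $\sum_l[\iota_l]=\sum_l[\iota_l']$ as homology classes; applying $\omega_f$ then gives $\sum_l\zeta_l^{(1)}=\sum_l\zeta_l^{(2)}\bmod n$.

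The step I expect to be the main obstacle is exactly the orientation bookkeeping that makes $c$ genuinely common to the two systems. A one-sided loop can be slid across a branch point, flipping the induced orientation of some $S_{q_i}$ and changing $c$ by $2[S_{q_i}]$, hence $\sum_l[\iota_l]$ by $[S_{q_i}]$ and $\sum_l\zeta_l$ by $\pm 2\theta_i$; this is precisely the mechanism behind relation (R2). One must therefore verify that two systems recording the same tuples $(2\theta_i)$ and $(2\eta_j)$ do induce the same oriented boundary classes, so that the torsion-free uniqueness applies. The delicate case is a loop whose recorded value satisfies $2\theta_i\equiv-2\theta_i$, where the orientation is invisible in the data; reconciling this with the residue information (the oddness of the $\zeta_l$ and the fixed value of $s$ locking the residue modulo $2$ alongside the residue modulo $m$) is where the real content of the lemma lies.
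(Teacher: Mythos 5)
Your route is genuinely different from the paper's, but it does not prove the lemma, and the gap is the one you yourself flag in the last paragraph: it is not a side issue of "bookkeeping" — it is the entire content of the statement. What your argument actually establishes is this: the surface relation $\sum_i[S_{q_i}]+\sum_j[S_{\delta_j}]=\pm 2\sum_l[\iota_l]$ in $H_1(\Sigma_g/f-B_f)$, applied to each system and evaluated by $\omega_f$ on the \emph{recorded} values, pins $\sum_l\zeta_l$ only modulo $m$ (the factor $2$ cannot be cancelled in $Z_{2m}$); your parity remark pins it modulo $2$ (and for this you only need $s_1\equiv s_2 \bmod 2$, which follows since $2h^{(i)}+s_i$ is the non-orientable genus of $\Sigma_g/f$ — note the paper applies the lemma with different $h$'s and different $s_i$'s, e.g.\ in Lemma \ref{R3}, so your insistence on equal $s$ is too rigid). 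When $m$ is odd these two congruences already give the claim modulo $n=2m$, a case you could close explicitly. But when $m$ is even the upgrade rests entirely on the unproved assertion that the integral class $c=\sum_i[S_{q_i}]+\sum_j[S_{\delta_j}]$, with the orientations induced by each system, is the \emph{same} class for both systems, and this cannot be deduced from the recorded data: if some $2\theta_i\equiv -2\theta_i \pmod{2m}$, i.e.\ $2\theta_i\equiv m$, the data is blind to the orientation of $S_{q_i}$. Concretely, for the map with data $[0,4;(2,2,2);(--);(1)]$ (realizable: $m=2$, $g=2$, quotient a projective plane), reversing the orientation of the single one-sided loop gives a second co-oriented system inducing the \emph{opposite} orientation on every $S_{q_i}$ while recording the identical tuple $(2,2,2)$, and $\zeta$ changes from $1$ to $3$ modulo $4$. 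So in the hard case ($m$ even) your proof collapses to the modulo-$m$ statement, and no argument from the recorded values alone can finish it; one must build the orientation-matching at the $S_{q_i}$, $S_{\delta_j}$ into the meaning of "same data".

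For comparison, the paper's proof interpolates between the two systems rather than characterizing each one homologically: it lifts both systems to $\Sigma_g/f^2$, cuts along $\Fix(\bar f)$, takes the region lying on the positive side of the first system and the negative side of the second, and projects it by $\pi_{\bar f}$ to a $2$-chain $K\subset\Sigma_g/f-B_f$ whose oriented boundary is $\sum_l\iota_l^{(1)}-\sum_l\iota_l^{(2)}$; then $\omega_f(\partial K)=0$ gives the congruence modulo $n$ in one stroke. That argument, however, needs exactly the consistency you are missing — to know that $K$ contains no branch points and no boundary curves $\delta_j$, the paper assumes that entries "contributing to the same part" of the two data correspond to the same side, which is automatic precisely when $2\theta_i\not\equiv-2\theta_i$ and $2\eta_j\not\equiv-2\eta_j$ in $Z_n$, and is an extra hypothesis otherwise. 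So your diagnosis of where the difficulty sits is accurate, and with the orientation-matching granted your torsion-freeness argument would give a clean alternative proof; but as written the proposal leaves the decisive step unproved.
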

\begin{proof}
Suppose that $f$ is the corresponding periodic map of these two collections of data. Let $\sigma_{k+1}^{(i)},\cdots,\sigma_{k+s_i}^{(i)}$, $i=1,2$ on $\Sigma_g/f^2$ be the lift of two sequences of co-oriented one-sided loops corresponding to these two collections of data. We can make an isotopy such that $\sigma_{k+1}^{(1)}\sqcup\cdots\sqcup\sigma_{k+s_1}^{(1)}$ and $\sigma_{k+1}^{(2)}\sqcup\cdots\sqcup\sigma_{k+s_2}^{(2)}$ have finite intersection $\{y_1,\cdots,y_r\}$. Now we cut the surface $\Sigma_g/f^2$ along $\Fix(\bar f)=\sqcup_{j=1}^k\sigma_j$ and obtain a compact surface $\Sigma_{h',2k}$ with induced orientation on boundaries $\sigma_j^{+,-}$, $j=1,2,\cdots,k$.

Suppose that $\sigma_j^+$ induces orientation on boundary component $\delta_j$ of $\Sigma_g/f$ such that $\omega_f(S_{\delta_j})=2\eta_j$ contributes to the same part $(2\eta_1,\cdots,2\eta_k)$ of these two collections of data, then  $\sigma_j^+\subset\Sigma_{h^{(1)},k+s_1}^+\cap\Sigma_{h^{(2)},k+s_2}^+$ and $\sigma_j^-\subset\Sigma_{h^{(1)},k+s_1}^-\cap\Sigma_{h^{(2)},k+s_2}^-$. Hence $\sigma_j^{+,-}$, $j=1,\cdots,k$ has no intersection with $\Sigma_{h^{(1)},k+s_1}^+\cap\Sigma_{h^{(2)},k+s_2}^-$. Similarly, $B_{f^2}$ has no intersection with $\Sigma_{h^{(1)},k+s_1}^+\cap\Sigma_{h^{(2)},k+s_2}^-$.

Since $\bar f(\partial(\Sigma_{h^{(1)},k+s_1}^+\cap\Sigma_{h^{(2)},k+s_2}^-))=\partial(\Sigma_{h^{(1)},k+s_1}^-\cap\Sigma_{h^{(2)},k+s_2}^+)$ and $(\Sigma_{h^{(1)},k+s_1}^+\cap\Sigma_{h^{(2)},k+s_2}^-)\cap(\Sigma_{h^{(1)},k+s_1}^-\cap\Sigma_{h^{(2)},k+s_2}^+)=\pi_{\bar f}^{-1}(\sigma_{k+1}^{(1)}\sqcup\cdots\sqcup\sigma_{k+s_1}^{(1)})\cap\pi_{\bar f}^{-1}(\sigma_{k+1}^{(2)}\sqcup\cdots\sqcup\sigma_{k+s_2}^{(2)})=\pi_{\bar f}^{-1}(\{y_1,\cdots,y_r\})$, it's easy to verify that $\pi_{\bar f}\colon\thinspace \Sigma_{h^{(1)},k+s_1}^+\cap\Sigma_{h^{(2)},k+s_2}^-\rightarrow\Sigma_g/f-B_f$ is one-to-one on $\Sigma_{h^{(1)},k+s_1}^+\cap\Sigma_{h^{(2)},k+s_2}^--\pi_{\bar f}^{-1}(\{y_1,\cdots,y_r\})$ and is two-to-one on $\pi_{\bar f}^{-1}(\{y_1,\cdots,y_r\})$. Then we know that $K=\pi_{\bar f}(\Sigma_{h^{(1)},k+s_1}^+\cap\Sigma_{h^{(2)},k+s_2}^-)$ is a subcomplex of $\Sigma_g/f-B_f$, with boundary the co-oriented one-sided loops. Give 2-simplices in $K$ the induced orientation from $\Sigma_{h^{(1)},k+s_1}^+\cap\Sigma_{h^{(2)},k+s_2}^-$, we have $\zeta_1^{(1)}+\cdots+\zeta_{s_1}^{(1)}-\zeta_1^{(2)}-\cdots-\zeta_{s_2}^{(2)}=\omega_f(\partial K)=0$.
\end{proof}

\begin{theorem}\label{s=1}
Given two orientation reversing periodic maps $f$ on $\Sigma_g$ and $f'$ on $\Sigma_g'$, if $\Sigma_g/f$ and $\Sigma_g'/f'$ are both non-orientable and both have data with $s=1$, then $f$ and $f'$ are conjugate to each other if and only if the corresponding data are equivalent under (R0) and (R2).
\end{theorem}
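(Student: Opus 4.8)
The plan is to prove the two directions separately, with sufficiency being essentially a citation and necessity reduced to a statement about a single map. For the sufficiency direction—data equivalent under (R0) and (R2) determine the same conjugacy class—I would simply invoke Lemma \ref{R01} for (R0) and Lemma \ref{R2} for (R2), since each establishes that the corresponding elementary move preserves the conjugacy class, and transitivity finishes it. Note that when $s=1$ the relations (R3) and (R4), and the third case of (R2), are vacuous (they require $s\geq 3$, $s=2$, and $1\leq l\leq s-1$ respectively), so (R0) and (R2) are the only available moves; this is why the statement lists exactly these.

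For necessity, suppose $f$ and $f'$ are conjugate via an orientation-preserving homeomorphism $h\colon\thinspace\Sigma_g\to\Sigma_g'$ with $f'=hfh^{-1}$. I would first reduce to comparing two data of a \emph{single} map. The map $h$ descends to $\bar h\colon\thinspace\Sigma_g/f\to\Sigma_g'/f'$ with $\bar h\circ\pi_f=\pi_{f'}\circ h$, $\bar h(B_f)=B_{f'}$, and $\omega_{f'}\circ\bar h_*=\omega_f$ (if $\tilde\gamma$ ends at $f^r(\tilde x)$ then $h\tilde\gamma$ ends at $f'^r(h\tilde x)$ because $hf=f'h$). If $\iota'$ is the co-oriented one-sided loop used for $f'$, then $\bar h^{-1}(\iota')$, with the transported co-orientation, is a valid co-oriented one-sided loop for $f$: cutting along it is homeomorphic, via $\bar h$, to the orientable cut of $\Sigma_g'/f'$ along $\iota'$. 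Since $\omega_{f'}\circ\bar h_*=\omega_f$, the data of $f'$ based on $\iota'$ coincides entry-by-entry with the data of $f$ based on $\bar h^{-1}(\iota')$. Thus it suffices to prove the key claim: for a single $f$ with $s=1$, any two co-oriented one-sided loops $\iota,\iota'$ produce data equivalent under (R0) and (R2).

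I would prove this claim algebraically rather than geometrically. After applying (R0) to align the labelings of the (intrinsic) branch points $q_i$ and boundary curves $\delta_j$, the two data can differ in their $\theta$- and $\eta$-parts only by signs: by Lemmas \ref{index}, \ref{type} and \ref{data} the magnitudes of $\omega_f(S_{q_i})=2\theta_i$ and $\omega_f(S_{\delta_j})=2\eta_j$ are pinned by the choice-independent valencies of $q_i$ and $\delta_j$, so changing the one-sided loop can only flip these signs and alter the single entry $\zeta_1$. Now the first two cases of (R2) flip the sign of any individual $\theta_i$ (resp.\ $\eta_j$) at the cost of adding $2\theta_i$ (resp.\ $2\eta_j$) to $\zeta_1$; applying these once for each index where the target signs disagree (each entry is touched once, from its original value, so no cross terms arise), I transform the first datum so that its $\theta$- and $\eta$-parts match those of the second exactly. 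It remains to match $\zeta_1$, and here I invoke Lemma \ref{balance}: once the $\theta$- and $\eta$-parts agree and both data represent the conjugacy class of $f$, and since $s=1$ makes the $\zeta$-sum equal to the single entry $\zeta_1$, the lemma forces the two values of $\zeta_1$ to agree mod $n$. Hence the two data become identical after the (R2) moves. As a consistency check, the global sign reversal (R1) is subsumed: flipping every $\theta_i$ and $\eta_j$ adds $2\sum\theta_i+2\sum\eta_j$ to $\zeta_1$, which by condition (2) of Theorem \ref{Nielsen12} (built into Theorem \ref{reconstruct}), $\sum\theta_i+\sum\eta_j+\zeta_1\equiv 0 \pmod m$, equals $-2\zeta_1$, so $\zeta_1\mapsto-\zeta_1$ as required.

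I expect the main obstacle to be precisely the last step: controlling $\zeta_1$. The valency argument only determines the $\theta$- and $\eta$-entries up to sign and leaves $\zeta_1$ a priori free, so the entire rigidity rests on Lemma \ref{balance}. The delicate point is that the lemma's hypothesis requires the $\theta$- and $\eta$-blocks to be \emph{identical} (including signs), which is exactly the configuration the (R2) sign-matching is engineered to produce; the care needed is therefore in the sign bookkeeping, ensuring that after the (R2) moves I land in precisely the situation Lemma \ref{balance} governs, so that no invariant beyond the (R2)-adjustable signs and the balance-constrained $\zeta_1$ can distinguish the two data.
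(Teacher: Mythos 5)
Your proposal is correct and follows essentially the same route as the paper: sufficiency by citing Lemmas \ref{R01} and \ref{R2}, and necessity by transporting the one-sided loop through the induced homeomorphism $\bar h$ to reduce to two data of a single map, matching the sign-ambiguous $\theta$- and $\eta$-entries with (R2) moves, and pinning down the lone $\zeta$-entry mod $n$ via Lemma \ref{balance}. Your added remarks (vacuousness of (R3)/(R4) when $s=1$, and (R1) being subsumed) are consistent elaborations of the same argument.
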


\begin{proof}
By lemma~\ref{R01} and lemma~\ref{R2}, we know that two collections of data which are equivalent under (R0) and (R2) determine the same conjugacy class. Now we suppose that $f$ on $\Sigma_g$ and $f'$ on $\Sigma_g'$ are conjugate to each other, which means there exists orientation preserving  $h\colon\thinspace \Sigma_g\rightarrow\Sigma_g'$ such that $f'=h\circ f\circ h^{-1}$. Then we have an induced homeomorphism $\bar h\colon\thinspace\Sigma_g/f\rightarrow\Sigma_g'/f'$. And for any $q_i\in B_f$, we have $h(q_i)\in B_{f'}$ and $\omega_f(S_{q_i})=\pm\omega_{f'}(S_{h(q_i)})$, here the sign is determined by the choice of co-oriented one-sided loops. Similarly, we have $\omega_f(S_{\delta_j})=\pm\omega_{f'}(S_{h(\delta_j)})$ for $\delta_j\subset\partial(\Sigma_g/f)$. Hence if $f$ has data $[h,2m;(\cdots,2\theta_i,\cdots);(\cdots,2\eta_j,\cdots);(\zeta_{k+1})]$, then the data of $f'$ is equivalent to $[h,2m;(\cdots,2\theta_i,\cdots);(\cdots,2\eta_j,\cdots);(\zeta_{k+1}')]$ under (R0) and (R2). Notice that $[h,2m;(\cdots,2\theta_i,\cdots);(\cdots,2\eta_j,\cdots);(\zeta_{k+1})]$ is also the data of $f'$ based on co-oriented one-sided loop $\{\bar h(\iota_1)\}$. By lemma \ref{balance}, we have $\zeta_{k+1}'=\zeta_{k+1} \mod n$.
\end{proof}

\begin{lemma}\label{R3}
Two collections of data which are equivalent under the relation (R3) determine the same conjugacy class.
\end{lemma}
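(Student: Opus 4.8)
The plan is to show that the single map $f$ realizing the first collection of data also realizes the second, by producing a new set of co-oriented one-sided loops on $\Sigma_g/f-B_f$ whose associated data is exactly the post-(R3) collection. This is the same strategy as in lemma \ref{R2}: I keep the branch points and the boundary loops $S_{\delta_j}$ untouched and replace only some of the one-sided loops by new ones obtained through band sums, then check that the co-orientation condition survives. The genuinely new feature, compared with (R2), is that the number of one-sided loops drops from $s$ to $s-2$, so the genus of the orientable cut-open surface must rise; this is where the classical relation $N_3\cong T^2\#\mathbb{RP}^2$ enters.

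Concretely, I would take a regular neighborhood $P$ of $\iota_{s-2}\cup\iota_{s-1}\cup\iota_s$ together with two embedded arcs joining them, chosen disjoint from the first $s-3$ loops, from $B_f$, and from $\partial(\Sigma_g/f)$. Since each $\iota_l$ is one-sided, $P$ is homeomorphic to $N_{3,1}$, a disk with three crosscaps whose cores are $\iota_{s-2},\iota_{s-1},\iota_s$. Let $\iota'$ be the band sum of the three cores along the two arcs. Being a sum of three one-sided classes, $\iota'$ is again one-sided, and it is a simple closed loop. The relation $N_3\cong T^2\#\mathbb{RP}^2$ (three crosscaps equal one handle plus one crosscap) identifies $\iota'$ with the single crosscap curve of a handle--crosscap splitting of $P$; in particular cutting $P$ along $\iota'$ yields the orientable surface $\Sigma_{1,2}$, a genus-one handle carrying the two boundary circles.

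I would then verify that $\{\iota_1,\dots,\iota_{s-3},\iota'\}$ is co-oriented. Outside $P$ the surface is unchanged, and since cutting $\Sigma_g/f$ along $\iota_1,\dots,\iota_{s-3}$ together with the original last three loops already produced the orientable $\Sigma_{h,k+s}$, replacing those three loops by $\iota'$ reglues the orientable piece $\Sigma_{1,2}$ coming from $P$ to the (orientable) complement along $\partial P$, so the result is again orientable, with an orientation chosen to agree with the original one away from $P$. A count of Euler characteristics, $2-2h-(k+s)=\chi(\Sigma_g/f)=2-2h'-(k+(s-2))$, forces the new genus to be $h'=h+1$. Because the modification is supported in $P$, the induced orientations on every $S_{q_i}$ and $S_{\delta_j}$ are unchanged, so those data entries remain $2\theta_i$ and $2\eta_j$. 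Finally, $\iota'$ is homologous in $\Sigma_g/f-B_f$ to $\iota_{s-2}+\iota_{s-1}+\iota_s$ (the two connecting arcs are traversed in both directions and cancel), so the homomorphism $\omega_f$ gives $\omega_f(\iota')=\zeta_{s-2}+\zeta_{s-1}+\zeta_s$, which is precisely the new last entry. This exhibits the post-(R3) data for the same map $f$.

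The main obstacle is the co-orientation step: I must ensure that $\iota'$ is not merely \emph{some} one-sided curve homologous to the sum, but exactly the curve whose complement in $P$ is orientable, and that the orientation of the reglued surface can be taken compatible with the old one so that all remaining entries keep their correct signs. This is where $N_3\cong T^2\#\mathbb{RP}^2$ does the real work: once the band-sum curve is recognized as the single crosscap of the handle--crosscap model, both the orientability of the complement and the sign bookkeeping (so that $\omega_f(\iota')$ picks up $+\zeta_{s-2}+\zeta_{s-1}+\zeta_s$ while the other entries stay fixed) follow from the standard picture. As a consistency check, lemma \ref{balance} applies, since the total $\zeta$-sum is preserved when the three summands are collapsed to $\zeta_{s-2}+\zeta_{s-1}+\zeta_s$.
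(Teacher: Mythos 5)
Your proposal is correct and follows essentially the same route as the paper: both replace the three one-sided loops $\iota_{s-2},\iota_{s-1},\iota_s$ by a single one-sided loop plus a handle via the classical identity $N_3\cong T^2\#\mathbb{RP}^2$, and both ultimately rest on lemma \ref{balance} to identify the new last entry. In one respect you are more careful than the paper: by supporting the modification in the neighborhood $P$ and extending the old orientation over the reglued piece $\Sigma_{1,2}$ across the single circle $\partial P$, you show the remaining entries $2\theta_i$, $2\eta_j$, $\zeta_1,\dots,\zeta_{s-3}$ are literally unchanged, whereas the paper only recovers them up to sign and then invokes (R2) to repair the signs. However, your direct claim that $\omega_f(\iota')=\zeta_{s-2}+\zeta_{s-1}+\zeta_s$ ``follows from the standard picture'' is the one non-rigorous step: the integral homology class of a band sum depends on how the bands are attached, and precisely because the cores are one-sided (an arc endpoint can be slid once around a core), every sign pattern $\pm\zeta_{s-2}\pm\zeta_{s-1}\pm\zeta_s$ is realized by some choice of bands; moreover you would still need to check that the orientation induced on $\iota'$ by your extended orientation is the one producing all plus signs. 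This is not a fatal gap, because the ingredient you relegate to a ``consistency check'' is in fact the proof: once the new system is known to be co-oriented with the other entries unchanged, lemma \ref{balance}, applied to the two collections of data of the \emph{same} map $f$, forces $\omega_f(\iota')=\zeta_{s-2}+\zeta_{s-1}+\zeta_s \bmod n$, which is exactly how the paper concludes. You should promote the balance lemma from consistency check to the decisive step and drop the homological sign claim.
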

\begin{proof}
In this case we have $s\geq 3$ co-oriented one-sided loops. By the knowledge of elementary topology, we can replace three of them by one one-sided loop and one handle body.

After the operation above, the data $[h,2m;(\cdots,2\theta_i,\cdots);(\cdots,2\eta_j,\cdots);(\cdots,\zeta_{s-2},\zeta_{s-1},\zeta_s)]$ becomes $[h+1,2m;(\cdots,2\theta_i',\cdots);(\cdots,2\eta_j',\cdots);(\cdots,\zeta_{s-2}')]$, here $\theta_i'=\pm\theta_i$, $i=1,2,\cdots,b$; $\eta_j'=\pm\eta_j$, $j=1,2,\cdots,k$; $\zeta_l'=\pm\zeta_l$, $l=1,2,\cdots,s-3$. By relation (R2), these data determine the same conjugacy class with $[h+1,2m;(\cdots,2\theta_i,\cdots);(\cdots,2\eta_j,\cdots);(\cdots,\zeta_{s-3},\zeta_{s-2}'')]$. By lemma \ref{balance},  $\zeta_{s-2}''= \zeta_{s-2}+\zeta_{s-1}+\zeta_s \mod n$.
\end{proof}

\begin{lemma}\label{operation2}
Suppose that $\{\iota_1,\iota_2\}$ is a set of co-oriented loops. Then for any simple loop $\gamma\subset\Sigma_g/f-B_f-\iota_1-\iota_2$ which does not separate $\iota_1$ and $\iota_2$, we can find a new simple one-sided loop $\iota_1'\subset\Sigma_g/f-B_f$ such that $\omega_f(\iota_1')=\omega_f(\iota_1)+\omega_f(\gamma)$.
\end{lemma}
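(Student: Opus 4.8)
The plan is to realize $\iota_1'$ as a band sum (handle slide) of $\iota_1$ along $\gamma$, exactly in the spirit of the proof of Lemma \ref{R2}. First I would choose base points $x=\iota_1(0)$ and $y\in\gamma$ together with an embedded arc $\alpha\colon[0,1]\to\Sigma_g/f-B_f$ from $x$ to $y$ whose interior is disjoint from $\iota_1$, $\iota_2$, $\gamma$ and $B_f$. Such an $\alpha$ exists for a reason unrelated to the hypothesis on $\gamma$: since $\iota_2$ is one-sided it is non-separating, so cutting $\Sigma_g/f$ along $\iota_2$ leaves a connected surface in which $\iota_1$ and $\gamma$ can be joined, and a generic path avoids $B_f$ and meets the loops only at its endpoints. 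Thickening $\alpha$ to a band and taking a small isotopy of the concatenation $\iota_1\cdot\alpha\cdot\gamma\cdot\bar\alpha$ supported near $\alpha$ then produces a simple loop $\iota_1'$ that is disjoint from $\iota_2$.

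Two of the remaining properties are immediate. For the homology value, $\iota_1'$ is freely homotopic to $\iota_1\cdot\alpha\cdot\gamma\cdot\bar\alpha$, so $[\iota_1']=[\iota_1]+[\gamma]$ in $H_1(\Sigma_g/f-B_f)$ (the conjugating arc $\alpha$ disappears in the abelianization), and applying the homomorphism $\omega_f$ gives $\omega_f(\iota_1')=\omega_f(\iota_1)+\omega_f(\gamma)$ once $\gamma$ is oriented compatibly with the band. For one-sidedness, note that $\gamma$ lies in $\Sigma_g/f-\iota_1-\iota_2$, which by the co-orientedness of $\{\iota_1,\iota_2\}$ is the interior of an orientable surface; hence a neighborhood of $\gamma$ is an annulus and $\gamma$ is two-sided in $\Sigma_g/f$. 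A band sum of a one-sided loop with a two-sided one is again one-sided, since its regular neighborhood is a M\"obius band, so $\iota_1'$ is one-sided.

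The substantive point, and the place where the hypothesis that $\gamma$ does not separate $\iota_1$ and $\iota_2$ is actually used, is to guarantee that $\iota_1'$ is an admissible replacement, i.e.\ that $\{\iota_1',\iota_2\}$ is again co-oriented, so that $\iota_1'$ can be fed back into the data of $f$. For this I would cut along $\iota_2$ first, obtaining a connected non-orientable surface $W$ in which $\iota_1$ is a one-sided curve whose complement is orientable; the homology class of the band sum $\iota_1'$ equals $[\iota_1]+[\gamma]$ with $\gamma$ two-sided, so $\iota_1'$ still represents the one-sided class of $W$, and the non-separating hypothesis is exactly what keeps $\iota_1'$ non-separating in $W$, hence keeps $W-\iota_1'$ (equivalently $\Sigma_g/f-\iota_1'-\iota_2$) connected and orientable. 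Choosing the orientation of the cut surface so that its induced orientation on $\iota_1'$ and $\iota_2$ agrees with their parametrizations then verifies both clauses in the definition of co-oriented.

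I expect this last step to be the main obstacle, since it is the one point genuinely sensitive to the non-separating hypothesis, whereas the band-sum construction and the computation of $\omega_f$ are routine. The cleanest way to control it is the orientation-double-cover criterion that cutting $\Sigma_g/f$ along a family of curves yields an orientable surface precisely when every loop in the complement is two-sided; this reduces the claim to checking that sliding $\iota_1$ over the two-sided, non-separating curve $\gamma$ creates no new one-sided loop in the complement.
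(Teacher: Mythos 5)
There is a genuine gap, and it sits exactly at the point you dismiss as routine. Your band sum along an arc $\alpha$ disjoint from $\iota_2$ is indeed always an embedded one-sided curve, but its homology class is $[\iota_1]+[\gamma]$ or $[\iota_1]-[\gamma]$, and which of the two occurs is forced by the arc (by how a local orientation transported along $\alpha$ matches up at the two ends); it is not a free choice. Your phrase ``once $\gamma$ is oriented compatibly with the band'' gives the game away: the loop $\gamma$ comes with a prescribed orientation (its value $\omega_f(\gamma)$ is specified --- in the application of Lemma \ref{operation3} one needs $\omega_f(\gamma)=2$, not $-2$), so reorienting $\gamma$ to fit the band only proves $\omega_f(\iota_1')=\omega_f(\iota_1)\pm\omega_f(\gamma)$, which is weaker than the statement. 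The paper's proof is organized around exactly this difficulty: if the resolution of $\iota_1\cdot\alpha\cdot\gamma\cdot\bar\alpha$ with $\alpha$ disjoint from $\iota_2$ realizes the wrong sign (equivalently, that concatenation is not isotopic to a simple loop), one re-routes $\alpha$ so that it crosses $\iota_2$ exactly once; crossing the one-sided $\iota_2$ flips the transported local orientation and hence flips the sign of the $\gamma$-contribution. The hypothesis that $\gamma$ does not separate $\iota_1$ and $\iota_2$ is used precisely here, to guarantee that such a re-routed arc can reach $\gamma(0)$ without meeting $\gamma$ in its interior. Your proposal never confronts this sign issue.

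Moreover, the purpose you assign to the hypothesis --- ensuring that $\{\iota_1',\iota_2\}$ is again co-oriented --- is both unnecessary and false in general. The lemma claims only the existence of a one-sided $\iota_1'$ with the stated $\omega_f$-value; the problem of finding a co-oriented partner is handled separately in the proof of Lemma \ref{R4}, where a \emph{new} loop $\iota_2'$ is chosen. And in fact cutting along $\{\iota_1',\iota_2\}$ yields an orientable surface if and only if $[\gamma]=0$ in $H_1(\Sigma_g/f;\mathbb{Z}_2)$: since $\iota_1'$ and $\iota_2$ are one-sided, orientability of the complement forces $w_1=PD[\iota_1']+PD[\iota_2]=w_1+PD[\gamma]$ with $\mathbb{Z}_2$ coefficients. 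The non-separation hypothesis does not give $[\gamma]=0$; indeed in the case $h>0$ of Lemma \ref{operation3} the loop $\gamma$ is deliberately chosen non-separating, so your claimed conclusion fails exactly where the lemma gets used. So the band-sum construction and one-sidedness are fine and agree with the paper, but the crux (achieving the $+$ sign for the given oriented $\gamma$) is missing, and the role of the hypothesis is misidentified.
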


\begin{proof}
Join the point $\iota_1(0)$ to the point $\gamma(0)$ by a simple path $\alpha\subset\Sigma_g/f-B_f-\iota_1-\iota_2$. If $\iota_1\cdot\alpha\cdot\gamma\cdot\bar{\alpha}$ is isotopy to a simple loop $\iota_1'$ on $\Sigma_g/f-N(B_f)$, we have $\omega_f(\iota_1')=\omega_f(\iota_1)+\omega_f(\gamma)$. Otherwise, since $\gamma$ does not separate $\iota_1$ and $\iota_2$, we can join $\iota_1(0)$ to $\gamma(0)$ by simple path $\alpha\subset\Sigma_g/f-B_f$ which intersects $\iota_2$ exactly once. Then $\iota_1\cdot\alpha\cdot\gamma\cdot\bar{\alpha}$ is isotopy to a simple one-sided loop $\iota_1'$ on $\Sigma_g/f-B_f$ and $\omega_f(\iota_1')=\omega_f(\iota_1)+\omega_f(\gamma)$.
\end{proof}

\begin{lemma}\label{operation3}
Suppose that $\{\iota_1,\iota_2\}$ is a set of co-oriented loops. We cut $\Sigma_g/f$ along $\iota_1$, $\iota_2$ and obtain $\Sigma_{h,k+2}^+$. Then if $h>0$, we can find a simple loop $\gamma$ on $\Sigma_g/f-B_f-\iota_1-\iota_2$, which satisfies: (1)$\gamma$ does not separate $\iota_1$ and $\iota_2$; (2)$\omega_f(\gamma)=2$. If $h=0$, we can find several disjointed simple loops $\gamma_i$, $i=1,2,\cdots,r$ on $\Sigma_g/f-B_f-\iota_1-\iota_2$, which satisfies: (1)$\sqcup_{i=1}^{r}S_i$ does not separate $\iota_1$ and $\iota_2$; (2)$\sum_{i=1}^{r}\omega_f(\gamma_i)=2\gcd\{\theta_1,\cdots,\theta_b,\eta_1,\cdots,\eta_k\}$.
\end{lemma}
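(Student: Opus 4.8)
The plan is to push everything up to the orientable quotient $\Sigma_{h,k+2}^+$ obtained by cutting $\Sigma_g/f$ along $\iota_1,\iota_2$, and to reduce the statement to a computation with the period-$m$ monodromy $\omega_{f^2}\colon\thinspace H_1(\Sigma_{h,k+2}^+-B_{f^2_+})\rightarrow Z_m$. The link between the two layers is the relation $\omega_f(\gamma)=2\,\omega_{f^2}(\gamma)$, which holds for every loop $\gamma\subset\Sigma_{h,k+2}^+$ (it avoids $\iota_1\cup\iota_2$, so $\pi_{\bar f}^{-1}(\gamma)$ has a copy in $\Sigma_{h,k+2}^+$, and the $f$-lift ends at $(f^2)^{\omega_{f^2}(\gamma)}=f^{2\omega_{f^2}(\gamma)}$) by the same computation as in lemma \ref{data}. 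Two structural facts drive the argument. First, by theorem \ref{split} the cover $\pi_{f^2}\colon\thinspace\Sigma_{\tau,e}^+\rightarrow\Sigma_{h,k+2}^+$ is connected, so $\omega_{f^2}$ is surjective onto $Z_m$; its image is the subgroup generated by the branch and boundary values $\theta_1,\dots,\theta_b$, $\eta_1,\dots,\eta_k$, $\zeta_1,\zeta_2$ and the handle classes. Second, a simple closed curve in $\Sigma_{h,k+2}^+$ that is non-separating there is in particular non-separating in $\Sigma_g/f$, hence cannot separate $\iota_1$ from $\iota_2$; this is how I intend to meet condition (1).

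Suppose first $h>0$. I fix a handle of $\Sigma_{h,k+2}^+$ with the standard pair $a,b$ of simple closed curves meeting once, and set $\alpha=\omega_{f^2}(a)$. Since $\omega_{f^2}$ is onto $Z_m$, the element $1-\alpha$ lies in the subgroup generated by the remaining branch curves $S_{\tilde q_i}$, the boundary curves $\sigma_j$, and the other handle curves. I would then build $\gamma$ by band-summing $a$ successively, along embedded arcs kept away from $\iota_1\cup\iota_2$, with suitably oriented parallel copies of these generating loops: each band sum changes the $\omega_{f^2}$-value by $\pm$ the value of the attached loop while keeping the curve simple, and because $a$ threads the handle nontrivially the result stays non-separating. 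After finitely many band sums the value is exactly $1$, so $\gamma$ is a simple non-separating loop (giving (1)) with $\omega_f(\gamma)=2\,\omega_{f^2}(\gamma)=2=2d$, giving (2).

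Now suppose $h=0$, so $\Sigma_{0,k+2}^+$ is planar. Here I would use only loops encircling the branch points $\tilde q_i$ and the boundaries $\sigma_j$ coming from $\Fix(\bar f)$, never the circles $\partial_1,\partial_2$ arising from $\iota_1,\iota_2$: a small circle around $\tilde q_i$ realizes $\omega_{f^2}=\theta_i$ and one around $\sigma_j$ realizes $\eta_j$, with reversed orientation giving the negatives. Writing $d=\gcd\{\theta_1,\dots,\theta_b,\eta_1,\dots,\eta_k\}=\sum_i a_i\theta_i+\sum_j b_j\eta_j$ by B\'ezout, I take the disjoint family consisting of $|a_i|$ nested parallel copies around each $\tilde q_i$ and $|b_j|$ nested parallel copies around each $\sigma_j$, oriented according to the signs of $a_i,b_j$. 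These are pairwise disjoint simple loops; since each bounds a disk (in the closed-up sphere) containing exactly one branch point or one $\delta$-hole and disjoint from $\partial_1,\partial_2$, their union leaves $\partial_1$ and $\partial_2$ in one complementary region and hence does not separate $\iota_1$ from $\iota_2$, giving (1). Finally $\sum_i\omega_f(\gamma_i)=2\sum_i\omega_{f^2}(\gamma_i)=2d$, giving (2).

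The two geometric realizations are the delicate points. In the $h>0$ case the main obstacle is checking that band-summing simultaneously preserves simplicity and non-separation while letting the $\omega_{f^2}$-value range over a generating set of $Z_m$; positive genus is exactly what supplies a non-separating carrier for this process, and connectedness of the cover (theorem \ref{split}) is what makes the target value $1$ reachable. In the $h=0$ case the only thing needing care is upgrading a B\'ezout combination to an honestly disjoint family of simple loops avoiding $\partial_1,\partial_2$, which planarity and nested parallel copies make routine.
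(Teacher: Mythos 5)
Your $h=0$ case is fine and is essentially the paper's own argument (a B\'ezout combination realized by disjoint parallel circles around the $\tilde q_i$ and the $\sigma_j$, kept away from the two $\iota$-boundaries), and your preliminary reductions --- $\omega_f(\gamma)=2\omega_{f^2}(\gamma)$ for loops in $\Sigma_{h,k+2}^+$, and ``non-separating in the cut surface $\Rightarrow$ does not separate $\iota_1$ from $\iota_2$'' --- are both correct. The gap is in the $h>0$ case, at the step: ``Since $\omega_{f^2}$ is onto $Z_m$, the element $1-\alpha$ lies in the subgroup generated by the remaining branch curves, the boundary curves, and the other handle curves.'' This is a non sequitur: surjectivity says that $\alpha$ \emph{together with} those values generates $Z_m$, not that $1-\alpha$ lies in the subgroup $H'$ generated by the others alone. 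Concretely, take $m=9$, $h=1$, $b=k=0$, $s=2$, and an $f$ for which $\omega_{f^2}$ takes the values $\alpha=\omega_{f^2}(a)=2$, $\beta=\omega_{f^2}(b)=0$ on your chosen handle pair and $3$, $6$ on the two boundary circles coming from $\iota_1,\iota_2$. Such an $f$ exists: the sum condition $3+6\equiv 0\pmod 9$, the oddness conditions of theorem \ref{reconstruct}, and surjectivity $\langle 2,0,3,6\rangle=Z_9$ all hold, so theorem \ref{Nielsen12} and theorem \ref{reconstruct} realize this monodromy. Here $H'=\langle 0,3,6\rangle=\{0,3,6\}$ while $1-\alpha=8\notin H'$, so band sums starting from $a$ only produce curves with values in $2+H'=\{2,5,8\}$ and never reach $1$. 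The lemma is still true in this example, but only via a curve such as one in the class $2[a]+[b]$ (value $4$, and $4+6\equiv 1$), i.e.\ a curve winding more than once around the handle, which your procedure never produces. Repairing your route would require starting from a simple closed curve in a general class $p[a]+q[b]$ with $\gcd(p,q)=1$ and proving the extra number-theoretic fact that $p\alpha+q\beta\equiv 1$ modulo $H'$ is solvable with $p,q$ coprime; that is true (because $1$ generates $Z_m/H'$), but it is exactly the content you omit.

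For comparison, the paper avoids all curve surgery by an existence-plus-uniqueness trick: cut $\Sigma_{h,k+2}^+$ along a curve $l$ that does not separate the two special boundary circles, use the Hurwitz realization theorem (theorem \ref{Nielsen12}) to build a \emph{model} periodic map over the cut surface whose two new boundary values are $1$ and $-1$, glue these boundaries back (their preimages are connected, by lemma \ref{index}) to obtain a model map with the same pre-data as $f$, and then invoke lemma \ref{predata} (Nielsen's uniqueness theorem) to get a conjugating homeomorphism that transports $l^+$ to the desired loop $\gamma$ with $\omega_{f^2}(\gamma)=1$ on the actual quotient. In effect, the realization and classification theorems do the work that your band-sum construction attempts to do by hand, and they do it without having to control how candidate curves sit inside the handle.
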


\begin{proof}
Case 1: $h>0$

We can find a simple loop $l$ on $\Sigma_{h,k+2}^+$, such that $l$ does not separate $\pi_{\bar f}^{-1}(\iota_1)$, $\pi_{\bar f}^{-1}(\iota_2)$. Then we cut $\Sigma_{h,k+2}^+$ along $l$ and obtain a new surface $\Sigma_{h-1,k+4}$. Denote the two new boundaries with induced orientation by $l^+$, $l^-$.

Because $\theta_1+\cdots+\theta_b+\eta_1+\cdots+\eta_k+\zeta_1+\zeta_2=\theta_1+\cdots+\theta_b+\eta_1+\cdots+\eta_k+\zeta_1+\zeta_2+1-1$,
by theorem \ref{Nielsen12}, there exists a periodic map $\varphi$ on $\Sigma_{\tau-1,e+2}$ with data $[h-1,m;(\theta_1,\cdots,\theta_b);(\eta_1,\cdots,\eta_k,\zeta_1,\zeta_2,1,-1)]$. And the branch covering is $\pi_{\varphi}\colon\thinspace\Sigma_{\tau-1,e+2}\rightarrow\Sigma_{h-1,k+4}$. Here $\omega_{\varphi}(l^+)=1$

By lemma \ref{index}, the components number of $\pi_{\varphi}^{-1}(l^+)$ and $\pi_{\varphi}^{-1}(l^-)$ are both one. If we glue these two boundaries of $\Sigma_{\tau-1,e+2}$ properly we can obtain a new surface $\Sigma_{\tau,e}$ with an induced periodic map $\varphi'$ on it with data $[h,m;(\theta_1,\cdots,\theta_b);(\eta_1,\cdots,\eta_k,\zeta_1,\zeta_2)]$. And the branch covering is $\pi_{\varphi'}\colon\thinspace\Sigma_{\tau,e}\rightarrow\Sigma_{h,k+2}$.

By lemma \ref{predata}, there is an orientation preserving homeomorphism $h\colon\thinspace\Sigma_{\tau,e}\rightarrow\Sigma_{\tau,e}^+$ such that $f^2_+=h\circ\varphi'\circ h^{-1}$. Then there is an induced homeomorphism $\bar h\colon\thinspace \Sigma_{h,k+2}\rightarrow\Sigma_{h,k+2}^+$ such that $\pi_{f^2_+}\circ h=\bar h\circ\pi_{\varphi'}$. Hence $\gamma=\bar h(l^+)$ is the required simple loop with $\omega_f(\gamma)=2\omega_{\varphi}(l^+)=2$.

Case 2: $h=0$

Let $\gcd\{\theta_1,\cdots,\theta_b,\eta_1,\cdots,\eta_k\}=u_1\theta_1+\cdots+u_b\theta_b+v_1\eta_1+\cdots+v_k\eta_k$ for integers
$u_1,\cdots,u_b,v_1,\cdots,v_k$. Then we can find $|u_i|$ circles around $q_i$ for each $i=1,2,\cdots,b$ with proper orientation and $|v_i|$ circles parallel with $\sigma_j$ for each $j=1,2,\cdots,k$ with proper orientation. It's easy to see that they are the required simple loops.
\end{proof}

\begin{lemma}\label{R4}
Two collections of data which are equivalent under the relation (R4) determine the same conjugacy class.
\end{lemma}

\begin{proof}
Suppose that $\{\iota_1,\iota_2\}$ is a set of co-oriented loops which gives us the former collection of data in (R4). We cut $\Sigma_g/f$ along $\iota_1$, $\iota_2$ and obtain $\Sigma_{h,k+2}^+$.

If $h>0$ and $d=1$, by lemma \ref{operation2} and lemma \ref{operation3}, we can find a new simple one-sided loop $\iota_1'$ with $\omega_f(\iota_1')=\omega_f(\iota_1)+2$. By the knowledge of topology of non-orientable surface, we can always find another simple one-sided loop $\iota_2'$ such that $\{\iota_1',\iota_2'\}$ is co-oriented. And data based on $\{\iota_1',\iota_2'\}$ is equivalent with the later collection of data in (R4) under relation (R2).

If $h=0$ and $d=\gcd\{\theta_1,\cdots,\theta_b,\eta_1,\cdots,\eta_k\}$, we only need to repeat the above process finite times to get the new set of co-oriented one-sided loops.
\end{proof}

\begin{lemma}\label{Hang-1}
Suppose that $\{\iota_1,\iota_2\}$ is a set of co-oriented loops. Then for an arbitrary one-sided loop $\iota\subset\Sigma_g/f-B_f$, we always have $\omega_f(\iota)=\pm \omega_f(\iota_1)+2rd$ or $\omega_f(\iota)=\pm \omega_f(\iota_2)+2rd$ for some integer $r$.
\end{lemma}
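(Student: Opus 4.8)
The plan is to reduce everything to a congruence modulo $2d$ and then read it off from the homology of $\Sigma_g/f-B_f$. Since $2rd$ ranges over all multiples of $2d$, the assertion is equivalent to
$$\omega_f(\iota)\equiv\pm\omega_f(\iota_1)\pmod{2d}\quad\text{or}\quad\omega_f(\iota)\equiv\pm\omega_f(\iota_2)\pmod{2d}.$$
First I would record the parity principle underlying the whole set-up: because $f$ reverses and $f^2$ preserves orientation, lifting a loop $\gamma\subset\Sigma_g/f-B_f$ and returning along it via $f^{\omega_f(\gamma)}$ reverses the transported local orientation of $\Sigma_g/f$ exactly when $\omega_f(\gamma)$ is odd; hence $\omega_f(\gamma)$ is odd precisely when $\gamma$ is one-sided. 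In particular every one-sided loop has odd $\omega_f$-value, in agreement with $\omega_f(\iota_1),\omega_f(\iota_2)$ being odd. This immediately settles the case $h>0$: there $d=1$, so $2d=2$, and $\omega_f(\iota)-\omega_f(\iota_1)$ is even, giving $\omega_f(\iota)=\omega_f(\iota_1)+2r$ for some integer $r$.

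It then remains to treat $h=0$, where $d=\gcd\{\theta_1,\dots,\theta_b,\eta_1,\dots,\eta_k\}$. Here $\Sigma_g/f$ carries no handles, so $H_1(\Sigma_g/f-B_f)$ is generated by the crosscap classes $[\iota_1],[\iota_2]$ together with the boundary classes $[S_{q_i}]$ and $[S_{\delta_j}]$, subject to the single surface relation $2[\iota_1]+2[\iota_2]+\sum_i[S_{q_i}]+\sum_j[S_{\delta_j}]=0$. Applying $\omega_f$ and using lemma \ref{data}, the boundary classes contribute $\omega_f(S_{q_i})=2\theta_i$ and $\omega_f(S_{\delta_j})=2\eta_j$, each a multiple of $2d$ because $d\mid\theta_i$ and $d\mid\eta_j$. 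Thus, modulo $2d$, the value of $\omega_f$ on any class depends only on the coefficients of $[\iota_1]$ and $[\iota_2]$: writing $[\iota]=a[\iota_1]+b[\iota_2]+(\text{boundary classes})$ gives $\omega_f(\iota)\equiv a\,\omega_f(\iota_1)+b\,\omega_f(\iota_2)\pmod{2d}$, and the parity principle forces $a+b$ to be odd.

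The heart of the matter is therefore to show that a simple one-sided loop $\iota$ has crosscap coefficients $a,b\in\{-1,0,1\}$; since $a+b$ is odd this forces exactly one of them to be $\pm1$ and the other $0$, which yields $\omega_f(\iota)\equiv\pm\omega_f(\iota_1)$ or $\pm\omega_f(\iota_2)\pmod{2d}$ and finishes the proof. This is the step I expect to be the main obstacle, and it is exactly where the hypothesis that $\iota$ be a \emph{simple} closed curve is indispensable: an immersed one-sided loop could represent a class such as $3[\iota_1]-2[\iota_2]+\cdots$, for which the conclusion genuinely fails. I would justify the coefficient bound geometrically: cutting the genus-two nonorientable surface $\Sigma_g/f$ along the simple one-sided curve $\iota$ deletes one crosscap and leaves a surface with a single remaining crosscap, whose core $\iota'$ makes $\{\iota,\iota'\}$ a co-oriented pair; comparing this system with $\{\iota_1,\iota_2\}$ exhibits $\iota$ as one of the two standard crosscap generators up to boundary-parallel modifications, which is precisely the statement $a,b\in\{-1,0,1\}$ with one of them zero. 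As a more self-contained alternative I would instead invoke the surgeries of lemma \ref{operation2} and lemma \ref{operation3}: they allow me to alter a crosscap generator by any two-sided loop that does not separate $\iota_1$ from $\iota_2$, and lemma \ref{operation3} shows the attainable $\omega_f$-increments are exactly the multiples of $2d$; combined with lemma \ref{balance}, which equates the total $\omega_f$-value of any two co-oriented systems, this confines $\omega_f(\iota)$ to the two claimed families.
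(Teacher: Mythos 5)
Your reduction to a congruence modulo $2d$ is faithful, and your parity observation is correct (a loop of $\Sigma_g/f-B_f$ lifts to a loop of the orientation double cover $\Sigma_g/f^2-\pi_{\bar f}^{-1}(B_f)$ exactly when its $\omega_f$-value is even, so one-sided simple loops have odd value); this does dispose of the case $h>0$, $d=1$ more cleanly than the paper. The bookkeeping for $h=0$ is also right: boundary classes carry values $2\theta_i$, $2\eta_j$, all divisible by $2d$, so everything hinges on the crosscap coefficients of $[\iota]$ in the punctured Klein bottle $\Sigma_g/f-B_f$.

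That hinge, however, is a genuine gap: the claim that a \emph{simple} one-sided loop satisfies $[\iota]\equiv\pm[\iota_1]$ or $\pm[\iota_2]$ modulo boundary classes is essentially the lemma itself, and neither of your justifications proves it. The ``comparison of crosscap systems'' is circular: that $\{\iota,\iota'\}$ is a co-oriented pair (true, by your Euler-characteristic count) only yields, via lemma \ref{balance}, the single congruence $\omega_f(\iota)+\omega_f(\iota')\equiv\omega_f(\iota_1)+\omega_f(\iota_2)\bmod n$, which cannot determine $\omega_f(\iota)$ alone; and if instead you invoke a homeomorphism carrying $\{\iota,\iota'\}$ to $\{\iota_1,\iota_2\}$, it controls the class of the \emph{image} of $\iota$, not of $\iota$ itself. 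The alternative route has the implication reversed: lemmas \ref{operation2} and \ref{operation3} construct \emph{some} one-sided loops whose values differ from $\omega_f(\iota_1)$ by multiples of $2d$ (a lower bound on what occurs), whereas lemma \ref{Hang-1} asserts that \emph{no other} values occur (an upper bound); your phrase ``the attainable increments are exactly the multiples of $2d$'' is nowhere established. The paper closes precisely this gap by a surgery induction: after triangulating, $\iota$ meets $\iota_1\sqcup\iota_2$ in an odd number of points, so some arc of $\iota$ returns to the same $\iota_j$; resolving there splits off a two-sided loop $\gamma$ disjoint from $\iota_1\sqcup\iota_2$, chosen so as not to separate $\iota_1$ from $\iota_2$, and when $h=0$ such a $\gamma$ bounds a punctured sphere together with boundary loops of $\Sigma_g/f-N(B_f)$, whence $2d\mid\omega_f(\gamma)$; iterating until one intersection remains gives $\omega_f(\iota)=\pm\omega_f(\iota_j)+2rd$. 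Your homological key fact is in fact true (for instance via the torus double cover of the closed Klein bottle, then capping off boundary components), but as written your proposal assumes it rather than proves it.
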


\begin{proof}
We can suppose that $\Sigma_g/f$ is triangulated with $\iota_1$ and $\iota_2$ to be sub-complexes. Make a small isotopy, such that $\iota$ is properly embedded. Then we can give each 2-simplex an orientation such that they are compatible everywhere except along $\iota_1$ and $\iota_2$. Since each time $\iota$ passes through $\iota_1$ or $\iota_2$ the orientation of nearby 2-simplexes would change once and $\iota$ has non-orientable neighborhood, we know that $\iota$ passes through $\iota_1\sqcup\iota_2$ odd times. In other words, $\iota_1$ and $\iota_2$ divide $\iota$ into odd pathes.

Then one of these pathes, denoted by $\alpha_1$, must have end points both in, say $\iota_1$. Suppose that the end points of $\alpha_1$ divide $\iota_1$ into two pathes $\beta_1$ and $\beta_2$, with $\beta_1(0)=\bar{\beta_2}(0)=\alpha_1(1)$. And suppose that $\iota=\alpha_1\cdot\alpha_2$. Then an isotopy of $\alpha_1\cdot\beta_1$ supported on the tubular neighborhood of $\iota_1$ would give us a loop $\gamma$ which is disjoint with $\iota_1\sqcup\iota_2$. And an isotopy of $\alpha_2\cdot\beta_2$ supported on the tubular neighborhood of $\iota_1$ would give us a new one-sided loop $\iota'$ which has fewer intersections with $\iota_1\sqcup\iota_2$. If $\gamma$ separate $\iota_1$ and $\iota_2$, the isotopy of $\alpha_1\cdot\bar{\beta_2}$ would give us a new loop which does not separate $\iota_1$ and $\iota_2$. We name this new loop by $\gamma$, then the isotopy of $\alpha_2\cdot\bar{\beta_1}$ would give us the new one-sided loop $\iota'$.

Now we have $\omega_f(\iota)=\omega_f(\iota')+\omega_f(\gamma)$. Since $\gamma$ is a loop on $\Sigma_{h,k+2}^+$, $\omega_f(\gamma)=2\omega_{f^2}(\gamma)$ is even in $Z_n$. If $h=0$, since $\gamma$ does not separate $\iota_1$ and $\iota_2$, $\gamma$ and some other boundary loops of $\Sigma_g/f-N(B_f)$ would bound a punctured sphere. Then $\omega_f(\gamma)$ is the sum of the value of $\omega_f$ at these boundary loops of $\Sigma_g/f-N(B_f)$. Hence we always have $2d|\omega_f(\gamma)$.

Repeat this process finite times, we would obtain a one-sided loop $\iota''$ which intersects with $\iota_1\sqcup\iota_2$ once. Suppose $\iota''$ intersects with $\iota_1$ once and the intersection is $\iota''(0)=\iota_1(0)$. Then an isotopy of $\iota''\cdot\iota_1$ supported on the neighborhood of $\iota_1$ would give us a loop $\gamma$ which is disjoint with $\iota_1\sqcup\iota_2$. If this loop separate $\iota_1$ and $\iota_2$, we let $\gamma$ be the isotopy of $\iota''\cdot\bar{\iota_1}$. Hence we have $\omega_f(\iota'')=\pm\omega_f(\iota_1)+\omega_f(\gamma)$ and $2d|\omega_f(\gamma)$.
\end{proof}

\begin{theorem}\label{s=2}
Given two orientation reversing periodic maps $f$ on $\Sigma_g$ and $f'$ on $\Sigma_g'$,
if $\Sigma_g/f$ and $\Sigma_g'/f'$ are both non-orientable and both have data with $s=2$, then $f$ and $f'$ are conjugate to each other if and only if the corresponding data are equivalent under (R0), (R2) and (R4).
\end{theorem}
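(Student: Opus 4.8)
The plan is to prove the two implications separately. The forward implication is immediate: lemmas \ref{R01}, \ref{R2} and \ref{R4} already establish that collections of data related by (R0), (R2) or (R4) determine the same conjugacy class, so in the $s=2$ setting no additional argument is needed for that direction. All of the work is in the converse.

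For the converse, suppose $f$ and $f'$ are conjugate by an orientation preserving $h\colon\thinspace\Sigma_g\rightarrow\Sigma_g'$, and let $\bar h\colon\thinspace\Sigma_g/f\rightarrow\Sigma_g'/f'$ be the induced homeomorphism. Since $f'\circ h=h\circ f$, lifting a loop $\gamma$ and applying $h$ shows $\omega_{f'}(\bar h(\gamma))=\omega_f(\gamma)$ for every $\gamma$. Hence $\{\bar h(\iota_1),\bar h(\iota_2)\}$ is a co-oriented pair on $\Sigma_g'/f'$ whose associated data is literally the data of $f$. This reduces the statement to a single surface: it suffices to show that two arbitrary co-oriented pairs $\{\iota_1,\iota_2\}$ and $\{\iota_1',\iota_2'\}$ on $\Sigma_g'/f'$ give data equivalent under (R0), (R2), (R4). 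I drop the primes on $f'$ from here on, and write the two resulting data sets as having $\zeta$-parts $(\zeta_1,\zeta_2)$ and $(\zeta_1',\zeta_2')$.

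I would first normalize the branch and boundary entries. The quantities $\gcd\{\theta_i,m\}$, the rotation numbers, and the combinatorial type are intrinsic to $f$ by lemma \ref{type}, so the two data sets differ in their $(2\theta_i)$ and $(2\eta_j)$ parts only by a permutation and by signs. Using (R0) to fix the permutation and the first two cases of (R2) to fix the signs, I bring the two $(2\theta),(2\eta)$ parts into exact agreement. Each such (R2) move alters only the last coordinate $\zeta_2$, and by $\pm2\theta_i$ or $\pm2\eta_j$, i.e. by an element of $2d\mathbb{Z}$ since $d\mid\theta_i$ and $d\mid\eta_j$; in particular $\zeta_1$ is untouched. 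With the $\theta,\eta$ parts now identical, both are data of the same map $f$, so lemma \ref{balance} gives that their $\zeta$-sums agree: $\zeta_1+\zeta_2=\zeta_1''+\zeta_2''=T\ \mod n$, where $(\zeta_1'',\zeta_2'')$ denotes the normalized second pair.

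It then remains to match $(\zeta_1,\zeta_2)$ with $(\zeta_1'',\zeta_2'')$ using only moves that fix the now-common $\theta,\eta$ part. These are the swap in (R0), the third case of (R2) with $l=1$, and (R4); acting on the first coordinate (with $\zeta_2=T-\zeta_1$ determined), they realize $\zeta_1\mapsto T-\zeta_1$, $\zeta_1\mapsto-\zeta_1$, and $\zeta_1\mapsto\zeta_1+2d$ respectively, each preserving the sum $T$. Consequently every element of $\pm\zeta_1''+2d\mathbb{Z}$ and of $\pm\zeta_2''+2d\mathbb{Z}$ is reachable from $\zeta_1''$. On the other hand, lemma \ref{Hang-1}, applied with reference pair $\{\iota_1,\iota_2\}$ and absorbing the $2d\mathbb{Z}$ adjustments introduced during normalization, yields $\zeta_1=\pm\zeta_c''+2rd$ for some $c\in\{1,2\}$ and some integer $r$. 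Thus $\zeta_1$ lies in the reachable set, and since every move preserves the common sum $T$, matching the first coordinate forces the second, completing the argument.

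The main obstacle is the reduction to a single surface together with the bookkeeping that lets the two alternatives of lemma \ref{Hang-1} be absorbed exactly into the orbit generated by (R0)-swap, (R2) case $3$, and (R4); the genuine topological content lies in lemmas \ref{balance} and \ref{Hang-1}, after which the $s=2$ case collapses to the short orbit computation above. I would also be careful to track the $h=0$ versus $h>0$ distinction, where $d=\gcd\{\theta_1,\dots,\theta_b,\eta_1,\dots,\eta_k\}$ and $d=1$ respectively, so that the $2d$-shift of (R4) matches the $2rd$ produced by lemma \ref{Hang-1} and that the normalization moves indeed stay within $2d\mathbb{Z}$.
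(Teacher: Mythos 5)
Your proof is correct and follows exactly the route the paper sets up: the forward direction from lemmas \ref{R01}, \ref{R2} and \ref{R4}, and the converse by transporting the data through the induced homeomorphism $\bar h$ (precisely as in the paper's proof of theorem \ref{s=1}), then combining lemma \ref{balance} with lemma \ref{Hang-1} to place the remaining $\zeta$-coordinate in the sum-preserving orbit generated by the (R0)-swap, the third case of (R2), and (R4). The paper itself leaves this assembly unwritten --- theorem \ref{s=2} is stated without proof after lemmas \ref{balance}--\ref{Hang-1} --- and your argument supplies exactly the intended assembly, with the sign/permutation normalization and the $2d$-divisibility bookkeeping handled correctly.
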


Now we finish this section by some considerations about the relation between conjugacy and local conjugacy.
\begin{lemma}
The following numbers or facts are determined by local conjugacy:
\begin{enumerate}
  \item Number $n$, $b$ and $k$
  \item $\chi(\Sigma_g/f)$
  \item $\Sigma_g/f$ is orientable or not
  \item The genus of $\Sigma_g/f-N(B_f)$
  \item Number $d$ in relation (R4).
\end{enumerate}
\end{lemma}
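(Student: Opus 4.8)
The plan is to read off each quantity from the germ of $f$ along $W_f$, exploiting the fact that a local conjugacy furnishes a \emph{global} orientation-preserving homeomorphism $h\colon\thinspace\Sigma_g\to\Sigma_{g'}$ with $h(W_f)=W_{f'}$ which intertwines $f$ and $f'$ on an $f$-invariant tubular neighborhood $N$ of $W_f$ (invariant since $f$ is an isometry preserving $W_f$). In particular $\Sigma_g\cong\Sigma_{g'}$, so $g=g'$, which I shall use freely.

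First I would settle $n,b,k$. For $n$: from $f'\circ h=h\circ f$ on $N$ one gets $f'^n\circ h=h\circ f^n=h$ on $N$, so $f'^n=\mathrm{id}$ on the open set $h(N)$; realizing $f'$ as an isometry, an isometry that is the identity on an open set is globally the identity, whence $n'\mid n$, and symmetrically $n\mid n'$. For $b$ and $k$: by Corollary~\ref{multiple points2} the isolated points of $W_f$ are exactly the even-orbit-length multiple points and the circle components are the odd ones; here $b$ is the number of $f$-orbits of the former (each orbit being one branch point $q_i$) and $k$ is the number of $f$-orbits of the latter (each orbit projecting to one boundary curve $\delta_j$). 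Since $h$ is a homeomorphism carrying $W_f$ onto $W_{f'}$ and intertwining the maps on $N$, it preserves the topological type of each component and sends $f$-orbits to $f'$-orbits, so $b=b'$ and $k=k'$.

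Next, $\chi(\Sigma_g/f)$ comes from a Riemann--Hurwitz count whose ingredients are all local. Over $\Sigma_g/f-N(B_f)$ the map $\pi_f$ is unbranched of degree $n$, so $\chi(\Sigma_g-N(W_f))=n\,\chi(\Sigma_g/f-N(B_f))$; filling back the $\sum_i t_i$ isolated points (each contributing $1$) and the annular neighborhoods of the circles (contributing $0$) yields $2-2g=n(\chi(\Sigma_g/f)-b)+\sum_{i=1}^b t_i$, where $t_i$ is the orbit length of $q_i$, a local quantity by Lemma~\ref{index}. Solving for $\chi(\Sigma_g/f)$ exhibits it as a function of $g,n,b$ and the $t_i$ alone. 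Once $\chi$ and the boundary-component count $b+k$ are in hand, the orientability of part~(3) pins down the homeomorphism type of $\Sigma_g/f-N(B_f)$ via the classification of compact surfaces, giving its genus (part~(4)); finally $d$ (part~(5)) is assembled from $\gcd\{\theta_i,\eta_j\}$ --- local by Lemma~\ref{type}, since it is insensitive to the sign ambiguities in the $\theta_i,\eta_j$ --- together with the dichotomy $h=0$ versus $h>0$, which is read off from that genus and the orientability.

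The hard part will be orientability (part~(3)), since $\chi$ and $k$ alone do not decide it. Here I would use the global homeomorphism $h$ rather than just the germ. Set $C=\pi_{f^2}^{-1}(\Fix(\bar f))$, the union of the circle components of $W_f$. The claim is that $\Sigma_g/f$ is orientable if and only if $\Sigma_g-C$ has exactly two components. Indeed, if $\Sigma_g-C=A_1\sqcup A_2$, then since $f$ permutes $\{A_1,A_2\}$ a bijectivity argument forces $f^2(A_i)=A_i$, so $(\Sigma_g-C)/f^2=A_1/f^2\sqcup A_2/f^2=\Sigma_g/f^2-\Fix(\bar f)$ is disconnected and Lemma~\ref{class2'} gives orientability; conversely, when $\Sigma_g/f$ is orientable we have $s=0$, so the cutting set of Theorem~\ref{split} is exactly $C$ and that theorem produces precisely two components. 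Since $h$ is a global homeomorphism with $h(C)=C'$, the spaces $\Sigma_g-C$ and $\Sigma_{g'}-C'$ have the same number of components, so orientability is indeed determined by local conjugacy.
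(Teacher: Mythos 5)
Your proposal is correct, and on items (1), (2), (4) and (5) it is essentially the paper's own proof: the paper reads $n$ off as the orbit length of points of $N(W_f)-W_f$ where you use isometric rigidity of $f'^n$ (both work); the Euler characteristic comes from the identical unbranched-covering count $\chi(\Sigma_g-N(W_f))=n\,\chi(\Sigma_g/f-N(B_f))$; the genus follows from $\chi$, the boundary count and orientability exactly as you say; and $d$ is assembled from the sign-insensitive local quantities $|\theta_i|,|\eta_j|$ together with the $h=0$ versus $h>0$ dichotomy (the paper's proof even states this dichotomy backwards relative to the definition of $d$ in Section~\ref{equivalent relation}, saying ``if $h=0$, $d=1$''). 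The genuine divergence is part (3): the paper settles orientability with the unproved one-liner that $\Sigma_g/f$ is orientable iff $W_f$ separates $\Sigma_g$, and taken literally that equivalence still requires showing that a non-orientable quotient forces the circles of $W_f$ \emph{not} to separate, an implication established nowhere in the paper. You instead prove the criterion ``orientable iff $\Sigma_g-C$ has exactly two components,'' and both directions of your equivalence follow from results the paper does prove: Lemma~\ref{class2'} for the forward direction (via the observation that $f^2$ preserves each of the two components, so $\Sigma_g/f^2-\Fix(\bar f)$ is disconnected) and Theorem~\ref{split} for the converse (orientability forces $s=0$, making the cutting set exactly $C$). Since a local conjugacy includes a global homeomorphism carrying the circle part $C$ of $W_f$ onto $C'$, the component count is preserved and orientability is determined. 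Your version of (3) is therefore self-contained where the paper's is an assertion; the only cost is length.
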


\begin{proof}
The orbit length of $x\in N(W_f)-W_f$ equals the period $n$. $b$ and $k$ are determined by local conjugacy in an obvious way. Since $\chi(\Sigma_g-N(W_f))$ is $n$ times $\chi(\Sigma_g/f-N(B_f))$, then $\chi(\Sigma_g/f-N(B_f))+b=\chi(\Sigma_g/f)$.

$\Sigma_g/f$ is orientable iff. $W_f$ separates $\Sigma_g$, which is determined by local conjugacy.

The genus of $\Sigma_g/f$ equals $h$ when $s=0$ and equals $2h+s$ when $s\neq 0$. Hence it can be derived from $2-2h-k-s=\chi(\Sigma_g/f)$.

In particular if $f$ has data with $s=2$, then $h$ in this data is also determined. If $h=0$, $d=1$. Otherwise, since the value of $|\omega_f|$ at boundary loops of $\Sigma_g/f-N(B_f)$ is determined by local conjugacy, then $d=\gcd\{\theta_1,\cdots,\theta_b,\eta_1,\cdots,\eta_k\}$ is determined.
\end{proof}

\begin{corollary}\label{local}
If $f$ and $f'$ are locally conjugate, then $\Sigma_g/f$ is homeomorphic to $\Sigma_g'/f'$ and
\begin{enumerate}
 \item If $\Sigma_g/f$ is orientable, then $f$ must be conjugate to $f'$
 \item If $\Sigma_g/f$ is non-orientable with genus number odd, then
  \begin{enumerate}
   \item If $m$ is odd, $f$ must be conjugate to $f'$
   \item If $m$ is even, $f$ must be conjugate to $f'$ or $f'^{m+1}$
  \end{enumerate}
 \item If $\Sigma_g/f$ is non-orientable with genus number even, then $f$ may belongs to at most $\frac{\gcd\{d,m\}+1}{2}$ different conjugate classes.
\end{enumerate}
\end{corollary}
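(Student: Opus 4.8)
The plan is to prove Corollary \ref{local} by translating the assertion ``$f$ and $f'$ are locally conjugate'' into a statement about their associated data, and then counting the equivalence classes of data (under (R0)--(R4)) that are compatible with a fixed local model. By the preceding lemma, local conjugacy already fixes $n=2m$, the numbers $b$ and $k$, the homeomorphism type of $\Sigma_g/f$ (orientability, genus, and hence the invariant $s$ for the chosen presentation), the multiset of $\lvert\omega_f\rvert$-values at the boundary loops $S_{q_i}$ and $S_{\delta_j}$, and the integer $d$ of (R4). Thus $f$ and $f'$ share all of the data $[h,2m;(2\theta_1,\dots,2\theta_b);(2\eta_1,\dots,2\eta_k);(\zeta_1,\dots,\zeta_s)]$ \emph{up to the signs} of the $\theta_i$, $\eta_j$ and up to the undetermined entries $\zeta_l$ coming from the one-sided loops. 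The whole problem therefore reduces to: given that the ``boundary part'' of the data is fixed up to sign, how many distinct conjugacy classes (i.e. equivalence classes under (R0)--(R4)) can arise?

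First I would dispose of the orientable case ($s=0$). Here there are no one-sided loops, the data is $[h,2m;(2\theta_i);(2\eta_j)]$, and by Theorem \ref{s=0} the conjugacy class is determined by this data up to (R0) and (R1). Since (R1) is exactly the global sign reversal and (R0) the reordering, the sign ambiguities fixed only ``up to $\pm$'' by local conjugacy are precisely absorbed by (R0)--(R1); hence $f\simeq f'$, giving part (1). For the non-orientable cases I would first reduce every presentation to a canonical one with $s=1$ or $s=2$ by repeated use of (R3), which collapses three one-sided loops into one at the cost of raising $h$; this is legitimate because the genus of $\Sigma_g/f$ being odd forces $s$ odd (so we may reduce to $s=1$) while even genus forces $s$ even (reduce to $s=2$). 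This normalization is what lets me invoke Theorems \ref{s=1} and \ref{s=2} as the governing classification results.

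For part (2), with $s=1$, Theorem \ref{s=1} says the class is determined by the data up to (R0) and (R2). By Lemma \ref{balance}, the single entry $\zeta_1$ is pinned modulo $n$ by the boundary sum $\zeta_1 = -(2\theta_1+\cdots+2\theta_b+2\eta_1+\cdots+2\eta_k)\bmod n$, so $\zeta_1$ is actually determined by the boundary data (hence by local conjugacy) up to the sign freedom handled by (R0)/(R2). When $m$ is odd I would argue that the combined action of (R2) -- which flips any single $\theta_i$ or $\eta_j$ while adjusting $\zeta_1$ -- suffices to match the two sign patterns, forcing $f\simeq f'$. When $m$ is even, Lemma \ref{class1} forbids $k>0$, and the two-to-one indeterminacy comes from the fact that passing to the power $f'^{m+1}$ multiplies every $\omega$-value by $m+1\equiv 1\bmod m$ but flips parity in the relevant $Z_{2m}$ component; I would show this is exactly the one extra class not reachable by (R0),(R2), yielding the dichotomy $f'$ or $f'^{m+1}$.

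The genuinely hard case, and the main obstacle, is part (3): even genus, $s=2$, governed by Theorem \ref{s=2} with relations (R0),(R2),(R4). Here the pair $(\zeta_1,\zeta_2)$ is constrained only by its sum modulo $n$ (Lemma \ref{balance}) together with the shearing relation (R4), $(\zeta_1,\zeta_2)\sim(\zeta_1+2d,\zeta_2-2d)$. The strategy is to fix the sum $\zeta_1+\zeta_2\equiv c\bmod n$ and ask how many orbits of $\zeta_1\bmod n$ survive under the subgroup generated by the shift $\zeta_1\mapsto\zeta_1+2d$ together with the reflection $\zeta_1\mapsto -\zeta_1$ (arising from (R0)/(R2) sign flips) and the swap $\zeta_1\leftrightarrow\zeta_2=c-\zeta_1$. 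Counting the orbits of this dihedral-type action of the group $\langle 2d\rangle$ inside $Z_n$ acting on a fixed coset is a finite but delicate combinatorial computation; I expect the count to be governed by $\gcd\{2d,n\}=2\gcd\{d,m\}$, and after identifying $\zeta_1$ with $-\zeta_1$ the number of orbits is $\tfrac{1}{2}(\gcd\{d,m\}+1)$, matching the claimed bound $\tfrac{\gcd\{d,m\}+1}{2}$. The subtle point -- the one I would spend the most care on -- is verifying that Lemma \ref{Hang-1} legitimately restricts \emph{every} admissible one-sided loop value to the arithmetic progression $\pm\omega_f(\iota_i)+2rd$, so that no further identifications beyond those recorded in (R2) and (R4) are available, and that the ``at most'' in the statement is sharp only up to the coincidences that genuinely collapse distinct residues.
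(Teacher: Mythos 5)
Your overall skeleton does match the paper's: part (1) via Theorem \ref{s=0}, normalization to $s=1$ (odd non-orientable genus) and $s=2$ (even non-orientable genus), and then a count of the residual ambiguity in the $\zeta$-entries using Theorems \ref{s=1} and \ref{s=2}. But the central step of part (2) contains a genuine error. You claim, citing Lemma \ref{balance}, that the entry $\zeta_1$ is pinned modulo $n$ by the boundary sum. This fails twice over. First, Lemma \ref{balance} has as hypothesis that the two collections of data \emph{already determine the same conjugacy class}; for maps that are only known to be locally conjugate, conjugacy is exactly what is to be decided, so the appeal is circular. Second, the conclusion is false, and it would contradict part 2(b) of the very statement you are proving: if $\zeta_1$ were determined mod $n$ by the boundary data, the two classes $f'$ and $f'^{m+1}$ could never both occur when $m$ is even. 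The correct constraint is a congruence mod $m$, not mod $n=2m$, and it comes from realizability rather than from Lemma \ref{balance}: by condition (4) of Theorem \ref{reconstruct} combined with condition (2) of Theorem \ref{Nielsen12}, one has $\theta_1+\cdots+\theta_b+\eta_1+\cdots+\eta_k+\zeta_1\equiv 0 \bmod m$, leaving exactly the two candidates $\zeta$ and $\zeta+m$ in $Z_n$. (Note also that your explicit formula would force $\zeta_1$ to be even, which is impossible by condition (3) of Theorem \ref{reconstruct}.)

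As a consequence, your mechanism for the dichotomy in part (2) is the wrong one. The paper settles it by a parity argument, not by sign-matching with (R2): when $m$ is odd, $\zeta+m$ is even, so by condition (3) of Theorem \ref{reconstruct} the data with entry $\zeta+m$ is not realizable by any periodic map, whence a single class; when $m$ is even, both $\zeta$ and $\zeta+m$ are odd, and $f'^{m+1}$ has exactly the data with entry $\zeta+m$, whence at most two classes. No amount of (R0)/(R2) manipulation can substitute for this, since what must be ruled out is the \emph{existence} of a map with the shifted data, not an equivalence of data. The same modulus error propagates into your part (3): by Theorem \ref{reconstruct} the sum $\zeta_1+\zeta_2$ is constrained only mod $m$, so the candidate set on which your shift--reflection--swap action must be run is larger than the single coset mod $n$ you allow. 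Your orbit-counting strategy is otherwise the paper's, and with the correct candidate set (and the swap and shift by $2d$ from (R0), (R4), together with the identification coming from powers of $f'$) it does yield the stated bound $\frac{\gcd\{d,m\}+1}{2}$; as written, however, the agreement of your count with that bound is fortuitous rather than established.
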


\begin{proof}
The first case is immediate from theorem \ref{s=0}.

If $s=1$ and $f$ has data $[h,2m;(2\theta_1,\cdots,2\theta_b);(2\eta_1,\cdots,2\eta_k);(\zeta)]$. Since $f'$ and $f$ are locally conjugate, by (R2), $f'$ has data $[h,2m;(2\theta_1,\cdots,2\theta_b);(2\eta_1,\cdots,2\eta_k);(\zeta')]$. By theorem \ref{reconstruct}, $\zeta'=\zeta \mod m$, hence $\zeta'=\zeta$ or $\zeta'=\zeta+m$. When $m$ is even, $f^{m+1}$ has data $[h,2m;(2\theta_1,\cdots,2\theta_b);(2\eta_1,\cdots,2\eta_k);(\zeta+m)]$; when $m$ is odd, $\zeta+m$ is even, by theorem \ref{reconstruct}, $[h,2m;(2\theta_1,\cdots,2\theta_b);(2\eta_1,\cdots,2\eta_k);(\zeta+m)]$ will not correspond to any periodic map. Then (2) is immediate.

If $s=2$ and $f$ has data $[h,2m;(2\theta_1,\cdots,2\theta_b);(2\eta_1,\cdots,2\eta_k);(\zeta_1,\zeta_2)]$. Since $f'$ and $f$ are locally conjugate, by (R2), $f'$ has data $[h,2m;(2\theta_1,\cdots,2\theta_b);(2\eta_1,\cdots,2\eta_k);(\zeta_1',\zeta_2')]$. By theorem \ref{reconstruct}, $\zeta_1+\zeta_2=\zeta_1'+\zeta_2' \mod m$. By theorem \ref{s=2}, $f$ and $f'$ are conjugate if $(\zeta_1',\zeta_2')=(\zeta_1+2d,\zeta_2-2d)$ or $(\zeta_1',\zeta_2')=(\zeta_1+2m,\zeta_2+2m)$ or $(\zeta_1',\zeta_2')=(\zeta_2,\zeta_1)$. Then it's easy to see that there are at most $\frac{\gcd\{d,m\}+1}{2}$ pairs $(\zeta_1',\zeta_2')$ which may give us different conjugate classes.
\end{proof}

\section{The orientation reversing periodic maps with large period}\label{large period}
As an application of our theory, we study the orientation reversing periodic maps with period $n>2(g-1)$. In particular we shall give a list of all possible conjugate classes of those with period $n\geq 3g$. Similar work was done in ~\cite{Zhao} by using techniques in NEC groups. In fact they have more detailed classification with $n>3g-3$.

\begin{lemma}\label{power2}
If the pre-data of $f$ is the data of $\varphi^r$, where $\varphi$ is an orientation preserving periodic map of period $m$ and $r$ is an integer satisfying $ur-vm=1$. Then
\begin{enumerate}
  \item If $u$ is odd, $f$ is the power of $f^u$, whose pre-data is the data of $\varphi$
  \item If $u$ is even, $f$ is the power of $f^{u+m}$, whose pre-data is the data of $\varphi$.
\end{enumerate}
\end{lemma}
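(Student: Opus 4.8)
The plan is to realize the desired map as a suitable \emph{odd} power of $f$ whose exponent is coprime to $2m$, exploiting the fact that passing to such a power multiplies the induced action on the orientation-preserving quotient by that exponent. First I would record the basic structure: since $f$ has period $2m$, the map $f^2$ is orientation preserving of period $m$, and by Lemma~\ref{predata} the conjugacy class of its positive part $f^2_+$ on $\Sigma_{\tau,e}^+$ is determined by the pre-data of $f$. By hypothesis this pre-data is the data of $\varphi^r$, so $f^2_+$ is conjugate to $\varphi^r$.

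Next, for any integer $a$ that is odd with $\gcd(a,m)=1$, I would show that $f^a$ is again an orientation reversing periodic map of period $2m$ sharing the $\pm$ decomposition of $f$, and that $(f^a)^2_+=(f^2_+)^a$. The point is that $a$ odd together with $\gcd(a,m)=1$ gives $\gcd(a,2m)=1$, so $f^a$ generates $\langle f\rangle$ (hence $f$ is indeed a power of $f^a$, namely $f=(f^a)^{a^{-1}\bmod 2m}$) and $\langle f^{2a}\rangle=\langle f^2\rangle$, whence $\Sigma_g/f^{2a}=\Sigma_g/f^2$. Moreover the induced involution satisfies $\overline{f^a}=\bar f^{\,a}=\bar f$, because $\bar f$ has order $2$ and $a$ is odd; consequently $\Fix(\overline{f^a})=\Fix(\bar f)$ and the cutting curves, the components $\Sigma_{\tau,e}^{\pm}$, and the chosen co-oriented one-sided loops are literally unchanged. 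Restricting $f^{2a}=(f^2)^a$ to $\Sigma_{\tau,e}^+$ then yields $(f^a)^2_+=(f^2_+)^a$, so the pre-data of $f^a$ is the data of $(f^2_+)^a$, which is conjugate to $\varphi^{ra}$.

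It remains to choose $a$ so that $\varphi^{ra}\simeq\varphi$, i.e.\ $ra\equiv 1\pmod m$; since $ur\equiv 1\pmod m$ this means $a\equiv u\pmod m$. In case~(1), $u$ is odd, so $a=u$ is admissible and $f^u$ has pre-data the data of $\varphi$. In case~(2), $u$ is even; here I would first observe that $m$ must be odd, since $ur-vm=1$ forces $ur$ to be odd whenever $m$ is even, making $u$ odd, contrary to hypothesis. With $m$ odd and $u$ even, $a=u+m$ is odd, satisfies $\gcd(a,m)=\gcd(u,m)=1$ and $a\equiv u\pmod m$, so $\varphi^{ra}=\varphi^{ru+rm}=\varphi^{ru}\simeq\varphi$ and $f^{u+m}$ has pre-data the data of $\varphi$.

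The main obstacle is the bookkeeping in the middle step: verifying that the auxiliary choices entering the definition of the pre-data — the decomposition into $\Sigma_{\tau,e}^{\pm}$ and the co-oriented one-sided loops — may be taken identical for $f$ and for $f^a$, so that $(f^a)^2_+=(f^2_+)^a$ holds on the nose rather than merely up to conjugacy. The identity $\overline{f^a}=\bar f$ for odd $a$ is precisely what makes this transparent; everything else reduces to the elementary arithmetic of the exponents modulo $2m$ and modulo $m$ carried out above.
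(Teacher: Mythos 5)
Your proof is correct and takes essentially the same route as the paper's: pass to the odd exponent $a=u$ (or $a=u+m$ when $u$ is even, using that $ur-vm=1$ then forces $m$ odd), note that oddness plus $\gcd\{a,m\}=1$ gives $\gcd\{a,2m\}=1$ so $f$ is a power of $f^a$, and identify the pre-data of $f^a$ with the data of $(f^2_+)^a\simeq\varphi^{ra}=\varphi$. Your explicit verification that the $\pm$-decomposition and co-oriented loops can be kept fixed (via $\overline{f^a}=\bar f$ for odd $a$) is a bookkeeping detail the paper leaves implicit, but it is not a different argument.
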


\begin{proof}
By the definition of pre-data, $f^2_+$ on $\Sigma_{\tau,e}^+$ is conjugate to $\varphi^r$. Since $ur-vm=1$, $(f^2_+)^u$ is conjugate to $(\varphi^r)^u=\varphi$.

If $u$ is odd, $f^u$ is an orientation reversing periodic map whose pre-data is the data of $\varphi$. From $ur-vm=1$ and $u$ is odd, we know that $\gcd\{u,n\}=1$. Thus $f$ is a power of $f^u$.

If $u$ is even, from $ur-vm=1$ we know that $m$ is odd. Then $u+m$ is odd and $f^{u+m}$ is an orientation reversing periodic map whose pre-data is the data of $\varphi$. From $(u+m)r-(v+r)m=1$ and $u+m$ is odd, we know that $\gcd\{u+m,n\}=1$. Thus $f$ is a power of $f^{u+m}$.
\end{proof}

\begin{lemma}
Given data $[h,m;(\theta_1,\cdots,\theta_b);(\eta_1,\cdots,\eta_c)]$ of an orientation preserving periodic map on compact oriented surface. It is the pre-data of some orientation reversing periodic map on a closed surface if and only if:
\begin{enumerate}
 \item $c>0$
 \item $t_j=\gcd\{\eta_j,m\}$ is odd, $j=1,2,\cdots,c$
\end{enumerate}
\end{lemma}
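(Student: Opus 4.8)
The plan is to establish the equivalence by proving both directions separately, with the necessity of the two conditions being straightforward and the sufficiency being the substantive part that relies on the reconstruction theorem \ref{reconstruct}. Throughout, I would work with the correspondence between the pre-data of an orientation reversing periodic map $f$ and the data of the orientation preserving map $f^2_+$ on $\Sigma_{\tau,e}^+$, as set up in section \ref{new data}.

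\textbf{Necessity.} Suppose the given data $[h,m;(\theta_1,\cdots,\theta_b);(\eta_1,\cdots,\eta_c)]$ is the pre-data of some orientation reversing periodic map $f$ on a closed surface. First I would show $c > 0$. By lemma \ref{class2'} and the construction preceding lemma \ref{t_i}, the boundary components $\sigma_1,\cdots,\sigma_{k+s}$ of $\Sigma_{h,k+s}^+$ arise as the lifts of $\Fix(\bar f)$ together with the one-sided loops $\iota_l$, so $c = k+s$; since theorem \ref{reconstruct} requires $k+s > 0$, we get $c > 0$. For the second condition, each $\eta_j = \omega_{f^2}(\sigma_j)$ corresponds to a boundary circle $\sigma_j$ whose orbit length $t_j = \gcd\{\eta_j, m\}$ (by lemma \ref{type}) must be odd — this is exactly the content of lemma \ref{t_i}, which shows every $t_j$ is odd.

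\textbf{Sufficiency.} This is where the real work lies. Given data satisfying conditions (1) and (2), I want to exhibit an orientation reversing periodic map whose pre-data it is. The natural strategy is to reverse-engineer the data of theorem \ref{reconstruct}: I would set $n = 2m$, split the boundary indices $\{1,\cdots,c\}$ into those with even multiplicity $q_j = m/t_j$ (to become the $\delta_j$ contributions $\eta_j$) and the rest, and produce doubled/odd quantities $2\theta_i$, $2\eta_j$, $\zeta_l$ meeting the five conditions of theorem \ref{reconstruct}. The key subtlety is matching parities: condition (2) here that each $t_j$ is odd must be reconciled with condition (3) of theorem \ref{reconstruct} (the $\zeta_l$ are odd) and condition (2) there (when $m$ is even, $k=0$). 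Using lemma \ref{data}, which records $\omega_f(S_{\delta_j}) = 2\eta_j$ and $\omega_f(\iota_l) = \eta_{k+l} \bmod m$, I would assign the $\eta_j$ whose associated $q_j$ forces even-order behavior to the $\delta$-family and the remainder, suitably lifted to odd representatives mod $n$, to the $\zeta$-family.

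\textbf{The main obstacle} I anticipate is verifying that one can always choose odd representatives $\zeta_l \equiv \eta_{k+l} \pmod m$ with $\gcd\{\zeta_l, n\}$ odd, and simultaneously arrange the case split dictated by the parity of $m$. When $m$ is even, lemma \ref{class1} forces $k=0$, so every boundary index must go into the $\zeta$-family; I would need to check that condition (2) ($t_j$ odd) guarantees each such $\eta_j$ admits an odd lift mod $n = 2m$ with odd gcd, which should follow since $t_j = \gcd\{\eta_j, m\}$ odd means $\eta_j/t_j$ is a unit mod $m/t_j$ and the lift $\eta_j$ or $\eta_j + m$ (whichever is odd) has gcd with $2m$ equal to $t_j$. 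Once the five conditions of theorem \ref{reconstruct} are confirmed, that theorem directly produces the desired orientation reversing periodic map, and by the definition of pre-data together with lemma \ref{data}, its pre-data is the given data. I would close by noting the reconstruction's output genus matches via condition (5) of theorem \ref{reconstruct} combined with the Riemann--Hurwitz relation in theorem \ref{Nielsen12}.
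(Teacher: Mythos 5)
Your overall strategy coincides with the paper's: necessity of (1) and (2) via condition 1 of theorem \ref{reconstruct} and lemma \ref{t_i}, and sufficiency by packaging the given data into a collection satisfying the five conditions of theorem \ref{reconstruct}. That plan is sound, and your necessity argument is fine. However, the parity bookkeeping in your sufficiency direction --- which is the entire content of this lemma --- goes wrong in two places.

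First, your splitting rule is backwards: you propose to send the indices with \emph{even} multiplicity $q_j=m/t_j$ to the $\delta$-family (the entries $2\eta_j$). But if some $q_j$ is even then $m$ is even (since $t_j$ is odd and $m=q_jt_j$), and condition 2 of theorem \ref{reconstruct} (equivalently lemma \ref{class1}) then forces $k=0$; so indices of even multiplicity are precisely the ones that \emph{cannot} be $\delta$-entries. You notice this constraint yourself two sentences later, contradicting your own rule. In fact no mixed splitting is ever needed: since all $t_j$ are odd, either $m$ is even, in which case every $q_j$ is even and every index must become a $\zeta$-entry, or $m$ is odd, in which case one may simply take $k=c$, $s=0$ as the paper does (putting everything into the $\zeta$-family with odd lifts also works in this case). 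Second, your argument that odd lifts exist fails exactly in the case where it is needed: when $m$ is even, $\eta_j$ and $\eta_j+m$ have the \emph{same} parity, so the instruction ``take whichever of $\eta_j$, $\eta_j+m$ is odd'' selects nothing. The correct and needed observation is the paper's one-liner: when $m$ is even, the hypothesis that $\gcd\{\eta_j,m\}$ is odd forces $\eta_j$ itself to be odd, so one may take $\zeta_l=\eta_l$ directly. (Your additional worry about arranging $\gcd\{\zeta_l,n\}$ to be odd is vacuous: theorem \ref{reconstruct} only asks that $\zeta_l$ be odd, and an odd $\zeta_l$ automatically has odd gcd with $n=2m$.) With these two repairs your proof reduces to the paper's.
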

\begin{proof}
Suppose that the above conditions are true.

If $m$ is even, we set $k=0$ and $s=c$. Since $t_j=\gcd\{\eta_j,m\}$ is odd, we know that $\eta_j$ is odd. Now we set $\zeta_l=\eta_l$, $l=1,\cdots,s$. By theorem \ref{reconstruct}, $[h,2m;(2\theta_1,\cdots,2\theta_b);(--);(\zeta_1,\cdots,\zeta_s)]$ is the data for some orientation reversing periodic maps.

If $m$ is odd, we set $k=c$ and $s=0$. Then by theorem \ref{reconstruct}, $[h,2m;(2\theta_1,\cdots,2\theta_b);(2\eta_1,\cdots,2\eta_k);(--)]$ is the data for some orientation reversing periodic maps.

The necessity follows from lemma \ref{t_i}.
\end{proof}

Given a collection of pre-data, it's easy to list all possible associated data of some orientation reversing maps. For simplicity, in the following discussion we only use pre-data.

Let $x_i=\frac{m}{\gcd\{\theta_i,m\}}$, $y_j=\frac{m}{\gcd\{\eta_j,m\}}$. By theorem \ref{Nielsen12} and theorem \ref{reconstruct}, we have the Riemann-Hurwitz formula

\begin{equation}\label{RH}
2\tau-2=m\left(2h-2+\Sigma_{i=1}^b(1-\frac{1}{x_i})+\Sigma_{j=1}^c(1-\frac{1}{y_j})\right)
\end{equation}
and
\begin{equation}\label{genus2}
g-1=m\left(2h-2+c+\Sigma_{i=1}^{b}(1-\frac{1}{x_i})\right).
\end{equation}

Then we have
\begin{equation}\label{genus}
g-1=2\tau-2+\Sigma_{j=1}^c\gcd\{\eta_j,m\}.
\end{equation}

\begin{lemma}\label{t=0}
If $n>2(g-1)$ and $y_j=1$, for some $j\in\{1,2,\cdots,c\}$, then $f$ must be conjugate to the power of a map of the following types:
\begin{tabbing}
1111\=$g=m-1$, $m$ is odd,11\= pre-data $[0,m;m-1,1;m]$\kill
1.  \>$g=m-1$, $m$ is odd,  \> pre-data $[0,m;(m-1,1);(m)]$\\
2.  \>$g=m-2$, $m$ is odd,  \> pre-data $[0,m;(m-1);(1,m)]$
\end{tabbing}
\end{lemma}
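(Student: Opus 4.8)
The plan is to treat the two Riemann--Hurwitz identities \eqref{RH} and \eqref{genus} as rigid numerical constraints and play them against the two hypotheses. First I would convert everything into arithmetic. Since $n=2m$, the hypothesis $n>2(g-1)$ is exactly $g-1<m$, and $y_j=1$ means $\gcd\{\eta_j,m\}=m$, i.e.\ $\eta_j\equiv 0\bmod m$. The corresponding loop $\sigma_j$ then has orbit length $t_j=\gcd\{\eta_j,m\}=m$, which is odd by lemma \ref{t_i}; hence $m$ is odd. Consequently every $x_i=m/\gcd\{\theta_i,m\}$ is an odd divisor of $m$ with $x_i\ge 2$ (as $\theta_i\not\equiv 0$), so in fact $x_i\ge 3$ and $1-1/x_i\ge 2/3$ at each branch point.

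Next I would substitute $y_j=1$ into \eqref{genus}. The boundary indexed by $j$ alone contributes $\gcd\{\eta_j,m\}=m$, while every other boundary contributes at least $1$, so $g-1\ge 2\tau-2+m+(c-1)$. Combined with $g-1\le m-1$ this forces $2\tau+c\le 2$; as $c\ge 1$ we obtain $\tau=0$ and $c\in\{1,2\}$. Feeding $\tau=0$ back into \eqref{RH} gives $2h-2+(\text{nonnegative})=-2/m<0$, so $h=0$ as well. Thus the quotient data live on a sphere base, \eqref{genus2} collapses to a single equation for $\sum_{i=1}^b(1-1/x_i)$, equal to $2-2/m$ when $c=1$ and to $1-1/m$ when $c=2$, and the genus is pinned to $g=m-1$ for $c=1$ and $g=m$ for $c=2$ (in the latter case the second boundary must satisfy $\gcd\{\eta_1,m\}=1$).

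The combinatorial heart of the argument is solving these equations subject to $x_i\mid m$, $x_i\ge 3$, and the conditions of theorem \ref{Nielsen12} (here $\theta_i\not\equiv 0$, $\sum_i\theta_i+\sum_{j'}\eta_{j'}\equiv 0$, and $\gcd$ of all values $=1$ since $h=0$). The bound $1-1/x_i\ge 2/3$ immediately caps the number of branch points: for $c=1$ it excludes $b\le 1$ and $b\ge 3$, forcing $b=2$, and the only solution of $1/x_1+1/x_2=2/m$ with $x_i\mid m$ is $x_1=x_2=m$; then $\sum_i\theta_i\equiv 0$ together with $\gcd\{\theta_1,m\}=1$ pins the pre-data to $[0,m;(\theta_1,m-\theta_1);(m)]$ with $\gcd\{\theta_1,m\}=1$. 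Likewise for $c=2$ one gets $b=1$, $x_1=m$, and pre-data $[0,m;(\theta_1);(m-\theta_1,m)]$ with $\gcd\{\theta_1,m\}=1$. I expect this enumeration to be the main obstacle, because it is the one place where the divisibility $x_i\mid m$, the oddness of $m$, and the $\gcd=1$ normalization must all be invoked simultaneously to exclude every stray branch configuration.

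Finally I would recognize each surviving pre-datum as the data of a power $\varphi^r$ with $\gcd\{r,m\}=1$ of the relevant base map: for $c=1$ the base $\varphi$ has data $[0,m;(m-1,1);(m)]$ and for $c=2$ it has data $[0,m;(m-1);(1,m)]$, the required exponent $r$ being determined by $\theta_1$ and invertible modulo $m$ precisely because $\gcd\{\theta_1,m\}=1$. Lemma \ref{power2} then promotes this identity of pre-data to the orientation reversing level and yields that $f$ is conjugate to a power of the orientation reversing map whose pre-data is the data of $\varphi$, namely one of the two listed types.
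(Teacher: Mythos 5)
Your proposal is correct and follows essentially the same path as the paper's proof: first oddness of $m$, then formula \eqref{genus} played against $m>g-1$ to force $\tau=0$ and $c\in\{1,2\}$, then identification of the pre-data as that of a power of a standard map, then lemma \ref{power2}. The only real divergence is the middle step: where the paper disposes of the branch data geometrically, by observing that $f^2_+$ must be a rotation of a punctured $2$-sphere (with two fixed points when $c=1$), you instead extract $h=0$ from \eqref{RH} and enumerate the solutions of \eqref{genus2} using the bound $x_i\geq 3$ (valid since each $x_i$ is a divisor of the odd integer $m$ with $x_i>1$); this arithmetic enumeration is sound and somewhat more self-contained than the paper's appeal to the geometry of sphere rotations, and it is the step where your write-up is actually more complete than the paper's ("it's easy to see that...''). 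One further point deserves emphasis: in the case $c=2$ you obtain $g=m$ rather than the $g=m-2$ printed in the statement, and your value is the correct one. Indeed the paper's own proof derives $\gcd\{\eta_2,m\}=1$, whence $g-1=-2+m+1=m-1$ by \eqref{genus}, and the listed pre-data $[0,m;(m-1);(1,m)]$ gives $g-1=m\left(0-2+2+1-\frac{1}{m}\right)=m-1$ by \eqref{genus2}. So the discrepancy is a typo in the lemma as stated, not an error in your argument.
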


\begin{proof}
Since $y_j=1$ for some $j$, we know that $\gcd\{\eta_j,m\}=m$ for some $j$. Then by theorem \ref{reconstruct}, $m$ should be an odd integer.

From $n>2(g-1)$ and formula \eqref{genus} we have $$m>g-1=2\tau-2+\Sigma_{j=1}^c \gcd\{\eta_j,m\}\geq 2\tau-2+m+(c-1)$$ Hence $2>2\tau+(c-1)$, which implies that $\tau=0$, $c=1,2$.

If $\tau=0$, $c=1$, we have $g-1=-2+m$. It's easy to see that $f^2_+\colon\thinspace\Sigma_{0,m}\rightarrow\Sigma_{0,m}$ is a rotation on the punctured 2-sphere with two fixed point. Then $f^2_+$ is the is the power of a map with data $[0,m;(m-1,1);(m)]$. By lemma \ref{power2}, $f$ is the power of a map with pre-data $[0,m;(m-1,1);(m)]$.

If $\tau=0$, $c=2$, we have $m>g-1=-2+m+\gcd\{\eta_2,m\}$. Thus $\gcd\{\eta_2,m\}=1$. Similarly, $f$ is the power of a map with pre-data $[0,m;(m-1);(1,m)]$.
\end{proof}

\begin{lemma}\label{b+c}
If $n>2(g-1)$, we must have $h=0$ and the following possibilities:
\begin{tabbing}
0.11\=$c=2$,1\=$b=0,1$\kill
1.  \>$c=2$, \>$b=0,1$\\
2.  \>$c=1$, \>$b=1,2,3$
\end{tabbing}
\end{lemma}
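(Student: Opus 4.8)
The plan is to read the statement off the Riemann--Hurwitz identity \eqref{genus2} together with the hypothesis $n>2(g-1)$. Writing $n=2m$, the hypothesis is exactly $m>g-1$, and since $m>0$ we may divide \eqref{genus2} by $m$ to obtain the single key inequality
\[
2h-2+c+\sum_{i=1}^{b}\Bigl(1-\tfrac{1}{x_i}\Bigr)<1 .
\]
Here I will lean on two facts already in hand: first, $c=k+s\geq 1$ by condition (1) of theorem \ref{reconstruct}; second, each branch point satisfies $\theta_i\neq 0\bmod m$ (condition (1) of theorem \ref{Nielsen12} applied to the pre-data), so $\gcd\{\theta_i,m\}<m$ and hence $x_i=\tfrac{m}{\gcd\{\theta_i,m\}}\geq 2$, which gives $1-\tfrac{1}{x_i}\geq\tfrac12$.

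First I would pin down $h$. Because the sum is nonnegative, the key inequality already forces $2h-2+c<1$, i.e. $2h+c<3$; combined with $c\geq 1$ this gives $h<1$, so $h=0$. Substituting $h=0$ and bounding the sum below by $b/2$ turns the inequality into $c+\tfrac{b}{2}<3$, that is $2c+b\leq 5$. With $c\geq 1$ this leaves only the finitely many pairs $(c,b)\in\{(1,0),(1,1),(1,2),(1,3),(2,0),(2,1)\}$, since any $c\geq 3$ already violates $2c+b\leq 5$.

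It then remains only to discard the single spurious pair $(c,b)=(1,0)$. For this pair the pre-data is $[0,m;(\,);(\eta_1)]$, and being the data of an orientation preserving periodic map of period $m$ it must satisfy theorem \ref{Nielsen12}. But there condition (2) forces $\eta_1\equiv 0\bmod m$, while condition (3) (available since $h=0$) forces $\gcd\{\eta_1,m\}=1$; these are compatible only when $m=1$. Hence for $m\geq 2$ the pair $(1,0)$ cannot occur, and the surviving list is precisely $c=2,\ b\in\{0,1\}$ together with $c=1,\ b\in\{1,2,3\}$, which is the assertion.

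The one delicate point — the only place any genuine checking enters — is this last exclusion: I must confirm that the obstruction isolates exactly $(1,0)$ and leaves the retained pairs untouched (for instance $(1,1)$ survives because there one may take $\eta_1=-\theta_1$ with $\gcd\{\theta_1,m\}=1$, which is consistent with conditions (2) and (3)). The degenerate value $m=1$, i.e. period $n=2$, lies outside the large-period regime under study and is treated separately, so assuming $m\geq 2$ throughout is harmless.
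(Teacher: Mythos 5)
Your proof is correct, and its backbone --- dividing the Riemann--Hurwitz identity \eqref{genus2} by $m$, then using $c\geq 1$ and $x_i\geq 2$ to force $h=0$ and $2c+b\leq 5$ --- is exactly the paper's argument. Where you genuinely diverge is in killing the leftover pair $(c,b)=(1,0)$. The paper disposes of it first, before the inequality: if $b=0$, $c=1$, then theorem \ref{reconstruct} forces $m\mid\eta_1$, hence $y_1=1$, and lemma \ref{t=0} then says $f$ is a power (up to conjugacy) of a map whose pre-data has $b+c=3$; since a map and a coprime power of it share the same $b$ and $c$, this contradicts $b+c=1$. You instead rule out $(1,0)$ by pure arithmetic inside theorem \ref{Nielsen12}: for the pre-data $[0,m;(\,);(\eta_1)]$, condition (2) gives $\eta_1\equiv 0\bmod m$ while condition (3) (applicable since $h=0$) gives $\gcd\{\eta_1,m\}=1$, which is impossible for $m\geq 2$. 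Your route is more elementary and self-contained --- it does not need the classification in lemma \ref{t=0}, nor the implicit fact about coprime powers preserving $(b,c)$ --- while the paper's route costs nothing extra in context, since the $y_j=1$ case must be handled by lemma \ref{t=0} anyway before lemma \ref{b+c} gets used in lemmas \ref{b+c=2} and \ref{b+c=4}. One further point in your favor: both arguments silently degenerate at $m=1$ (the antipodal map on $S^2$ has $g=0$, $n=2>2(g-1)$, $h=0$ and $(c,b)=(1,0)$, so that value must genuinely be excluded), and you flag this explicitly, whereas the paper's proof --- whose appeal to lemma \ref{t=0} also breaks down there, since that lemma's listed pre-data become meaningless at $m=1$ --- does not.
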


\begin{proof}
First of all we would show that $b+c\geq 2$: Assume that $b+c<2$, by $c>0$ we have $b=0$, $c=1$. From theorem \ref{reconstruct} we have $m|\eta_1$ and $y_1=1$. By lemma \ref{t=0}, we know that $b+c=3$, which contradicts with the assumption.

Then by $n>2(g-1)$ and formula \eqref{genus2}, we have
$1>\frac{g-1}{m}=2h-2+c+\Sigma_{i=1}^{b}(1-\frac{1}{x_i})\geq 2h-2+c+\frac{b}{2}$.
Hence $h=0$ and (i)$c=2$, $b=0,1$ or (ii) $c=1$, $b=1,2,3$.
\end{proof}

Since $k+s=c$, we easily have:
\begin{theorem}
If $f$ is an orientation reversing periodic map of period $n$ on closed surface $\Sigma_g$ and $n>2(g-1)$, then $\Sigma_g/f$ is homeomorphic to $\Sigma_{0,2}$, $N_{1,1}$, $N_{2,0}$, $\Sigma_{0,1}$ or $N_{1,0}$.
\end{theorem}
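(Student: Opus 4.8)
The plan is to read off the homeomorphism type of $\Sigma_g/f$ directly from the constraints already extracted in lemma \ref{b+c}, so that essentially no new computation is needed. First I would invoke lemma \ref{b+c}: under the hypothesis $n>2(g-1)$ it forces $h=0$ together with the two alternatives $c=2$, $b\in\{0,1\}$ and $c=1$, $b\in\{1,2,3\}$. In particular the genus of the orientable surface obtained after cutting is $h=0$, and the total $c$ of boundary-and-one-sided data satisfies $c\in\{1,2\}$.

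Next I would recall how $\Sigma_g/f$ is assembled from the invariants $h$, $k$, $s$ introduced in section \ref{new data}. The surface $\Sigma_g/f$ has exactly $k$ boundary components, namely the $\delta_j=\pi_{\bar f}(\sigma_j)$; cutting it along the $s$ co-oriented one-sided loops $\iota_l$ yields the orientable surface $\Sigma_{h,k+s}$. Reversing this procedure, $\Sigma_g/f$ is recovered from $\Sigma_{h,k+s}$ by capping $s$ of its boundary circles with M\"{o}bius bands. Hence $\Sigma_g/f$ is the orientable surface $\Sigma_{h,k}$ when $s=0$, and the non-orientable surface $N_{2h+s,k}$ when $s>0$. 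The isolated branch points $q_1,\dots,q_b$ are interior points, so the integer $b$ plays no role in the homeomorphism type; this is precisely why lemma \ref{b+c} only needs to pin down $c$ and not $b$.

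Now I would combine these two facts through the identity $c=k+s$ together with $h=0$, enumerating the finitely many ways to split $c$ as $k+s$. For $c=1$ the pairs $(k,s)$ are $(1,0)$ and $(0,1)$, giving $\Sigma_{0,1}$ and $N_{1,0}$ respectively. For $c=2$ the pairs $(k,s)$ are $(2,0)$, $(1,1)$ and $(0,2)$, giving $\Sigma_{0,2}$, $N_{1,1}$ and $N_{2,0}$ respectively. Taking the union over both cases yields exactly the five surfaces in the statement.

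The argument is entirely bookkeeping; the only step that demands care is the translation of the triple $(h,k,s)$ into a homeomorphism type, where one must correctly account for the fact that capping a boundary circle with a M\"{o}bius band raises the non-orientable genus by one, so that $s$ cross-caps on an orientable piece of genus $h$ produce non-orientable genus $2h+s$, and one must confirm that the branch-point count $b$ is irrelevant to the underlying surface. Once this dictionary between $(h,k,s)$ and the surface type is fixed, no genuine obstacle remains.
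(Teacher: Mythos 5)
Your proposal is correct and follows exactly the paper's route: the paper derives this theorem as an immediate consequence of lemma \ref{b+c} via the identity $k+s=c$, which is precisely your enumeration of the pairs $(k,s)$ with $h=0$. Your explicit dictionary translating $(h,k,s)$ into $\Sigma_{h,k}$ (when $s=0$) or $N_{2h+s,k}$ (when $s>0$) just spells out the bookkeeping the paper leaves implicit.
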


\begin{lemma}\label{b+c=2}
If $h=0$, $b+c=2$ and $y_j>1$ for each $j=1,2,\cdots,c$, then $f$ is conjugate to the power of some map with the following pre-data:
\begin{tabbing}
0.11\= $g=0$,11\= pre-data $[0,m;(m-1);(1)]$\kill
1.  \> $g=0$,  \> pre-data $[0,m;(m-1);(1)]$\\
2.  \> $g=1$,  \> pre-data $[0,m;(--);(m-1,1)]$
\end{tabbing}
\end{lemma}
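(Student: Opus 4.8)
The plan is to follow exactly the template of Lemma~\ref{t=0}: first use the hypotheses to pin down the pre-data completely through the Riemann--Hurwitz relations \eqref{RH} and \eqref{genus2}, then recognize $f^2_+$ as a power of an explicit rotation and finish by invoking Lemma~\ref{power2}. The first step is to split into cases. Since the reconstruction requires $c>0$ (condition (1) of Theorem~\ref{reconstruct}) and we are given $b+c=2$, the only possibilities are $c=1,\ b=1$ and $c=2,\ b=0$.

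Next I would substitute $h=0$ and the relevant values of $b,c$ into \eqref{genus2} and \eqref{RH}. When $c=2,\ b=0$, formula \eqref{genus2} gives $g-1=m(-2+2)=0$, so $g=1$, while \eqref{RH} becomes $2\tau-2=-\gcd\{\eta_1,m\}-\gcd\{\eta_2,m\}$; since $\tau\geq 0$ and each gcd is at least $1$, both gcds equal $1$ and $\tau=0$. When $c=1,\ b=1$, formula \eqref{genus2} gives $g-1=m(-1+(1-1/x_1))=-\gcd\{\theta_1,m\}$, and $g\geq 0$ forces $\gcd\{\theta_1,m\}=1$ and $g=0$; then \eqref{RH} reads $2\tau-2=-\gcd\{\theta_1,m\}-\gcd\{\eta_1,m\}=-1-\gcd\{\eta_1,m\}$, so again $\gcd\{\eta_1,m\}=1$ and $\tau=0$. (The hypothesis $y_j>1$, i.e.\ $\gcd\{\eta_j,m\}<m$, only serves to separate this lemma from Lemma~\ref{t=0}; together with $\gcd\{\eta_j,m\}=1$ it forces $m>1$. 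Note also that the derived values $g\in\{0,1\}$ make the standing inequality $n>2(g-1)$ automatic.)

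Having forced $\tau=0$ and every relevant gcd to equal $1$, I would identify $f^2_+\colon\thinspace\Sigma_{0,e}^+\to\Sigma_{0,e}^+$ concretely. In the case $c=2,\ b=0$ it is a fixed-point-free orientation preserving $Z_m$-action on a planar surface whose quotient is an annulus, i.e.\ a free rotation of the annulus (connectivity of the cover follows from condition (3) of Theorem~\ref{Nielsen12}, since $\gcd\{\eta_1,m\}=1$ makes the core loop generate $Z_m$); hence it is conjugate to $\varphi^r$, where $\varphi$ is the rotation by $1/m$ with data $[0,m;(--);(m-1,1)]$ and $\gcd\{r,m\}=1$. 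In the case $c=1,\ b=1$ it is a rotation of a disk fixing one center, conjugate to $\psi^r$ where $\psi$ has data $[0,m;(m-1);(1)]$. The powers of these standard rotations realize exactly the data permitted by conditions (2)--(3) of Theorem~\ref{Nielsen12} under the constraints just derived, so such an $r$ with $\gcd\{r,m\}=1$ exists, and by the classification of section~\ref{orientable} the pre-data of $f$ is genuinely the data of $\varphi^r$ (resp.\ $\psi^r$). Writing $ur-vm=1$ and applying Lemma~\ref{power2} then yields that $f$ is a power of a map whose pre-data is the data of $\varphi$ (resp.\ $\psi$), namely $[0,m;(--);(m-1,1)]$ with $g=1$, or $[0,m;(m-1);(1)]$ with $g=0$, which is precisely the claimed list.

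The main obstacle is the identification step of the third paragraph: one must verify that the heavily constrained pre-data is actually realized as a power of the explicit rotation $\varphi$ (resp.\ $\psi$), not merely that it shares the Riemann--Hurwitz invariants. This rests on two facts established earlier, that over a sphere ($h=0$) the data satisfying the coprimality and sum conditions of Theorem~\ref{Nielsen12} determines the conjugacy class, and that every orientation preserving rotation of the annulus (resp.\ of the disk fixing its center) of period dividing $m$ is a power of the basic $1/m$-rotation. Everything else is the routine arithmetic of \eqref{RH}, \eqref{genus2}, and a single call to Lemma~\ref{power2}.
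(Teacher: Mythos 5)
Your proposal is correct and follows essentially the same route as the paper: the Riemann--Hurwitz relations \eqref{RH} and \eqref{genus2} force $\tau=0$ and all the relevant gcds to equal $1$, whence $f^2_+$ is a rotation of a disc (case $b=1,c=1$, giving $g=0$) or of an annulus (case $b=0,c=2$, giving $g=1$), and Lemma~\ref{power2} finishes the argument. The paper compresses the identification of the rotation and the appeal to Lemma~\ref{power2} into ``the rest of the proof is trivial,'' which is exactly the content you spell out.
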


\begin{proof}
By the formula \eqref{RH} we have $2\tau-2=-\Sigma_{i=1}^{b}\gcd\{m,\theta_i\}-\Sigma_{j=1}^{c}\gcd\{m,\eta_j\}$.

Then we obtain $\tau=0$ and each term of the right side of this equation has value $1$, which implies that
\begin{tabbing}
0.11\= $g=0$,11111111\= pre-data $[0,m;m-1;1]$\kill
1.  \> $b=1$, $c=1$  \> $f^2_+\colon\thinspace\Sigma_{0,1}\rightarrow\Sigma_{0,1}$ is conjugate to a rotation on a disc\\
2.  \> $b=0$, $c=2$  \> $f^2_+\colon\thinspace\Sigma_{0,2}\rightarrow\Sigma_{0,2}$ is conjugate to a rotation on a annulus.
\end{tabbing}
The rest of the proof is trivial.
\end{proof}

\begin{lemma}\label{b+c=4}
If $n>2(g-1)$, $b+c=4$ and $y_j>1$ for each $j=1,2,\cdots,c$, then $f$ is conjugate to the power of some map with the following pre-data:
\begin{tabbing}
0.11\=$n=2g$,11\= pre-data $[0,2k;(k,k,2k-1);(1)]$\kill
1.  \>$n=2g$,  \> pre-data $[0,2k;(k,k,2k-1);(1)]$\\
2.  \>$n=2g$,  \> pre-data $[0,6;(3,4,4);(1)]$\\
3.  \>$n=2g$,  \> pre-data $[0,6;(3,2,4);(3)]$\\
4.  \>$n=2g$,  \> pre-data $[0,12;(6,8,9);(1)]$\\
5.  \>$n=2g$,  \> pre-data $[0,30;(15,20,24);(1)]$
\end{tabbing}
\end{lemma}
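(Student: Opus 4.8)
The plan is to first fix the combinatorial type of the quotient, then convert the hypothesis $n>2(g-1)$ into a spherical inequality, and finally match each surviving configuration with a power of one of the listed maps by means of lemma~\ref{power2}. First I would apply lemma~\ref{b+c}: since $n>2(g-1)$ we must have $h=0$ together with either $c=2,\,b\in\{0,1\}$ or $c=1,\,b\in\{1,2,3\}$, and intersecting these with the hypothesis $b+c=4$ leaves only $h=0$, $c=1$, $b=3$. Substituting into formula~\eqref{genus2} gives $g-1=m\left(2-\tfrac1{x_1}-\tfrac1{x_2}-\tfrac1{x_3}\right)$, and combining with $n=2m>2(g-1)$ yields $\tfrac1{x_1}+\tfrac1{x_2}+\tfrac1{x_3}>1$. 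Because the pre-data is the data of an orientation preserving map, condition (1) of theorem~\ref{Nielsen12} gives $\theta_i\not\equiv0\bmod m$, so every $x_i\ge2$, and the unordered triple $(x_1,x_2,x_3)$ must be one of the classical spherical triples $(2,2,\ell)$ with $\ell\ge2$, $(2,3,3)$, $(2,3,4)$, or $(2,3,5)$.

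Next I would show that $m$ is forced to equal $\lcm\{x_1,x_2,x_3\}$, which simultaneously pins down $n=2g$. Write $\theta_i=(m/x_i)u_i$ with $\gcd\{u_i,x_i\}=1$ and $\eta_1=(m/y_1)v_1$, and let $v_p$ denote the $p$-adic valuation. A prime-by-prime reading of the connectedness condition $\gcd\{\theta_1,\theta_2,\theta_3,\eta_1\}=1\bmod m$ (condition (3) of theorem~\ref{Nielsen12}, available since $h=0$) shows that $v_p(m)=\max\{v_p(x_1),v_p(x_2),v_p(x_3),v_p(y_1)\}$ for every prime $p$, i.e. $m=\lcm\{x_1,x_2,x_3,y_1\}$. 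The sum relation $\eta_1\equiv-(\theta_1+\theta_2+\theta_3)\bmod m$ then bounds $v_p(y_1)\le\max_i v_p(x_i)$, so $y_1\mid\lcm\{x_1,x_2,x_3\}$ and hence $m=\lcm\{x_1,x_2,x_3\}$. A direct check of the four triples gives $\tfrac1{x_1}+\tfrac1{x_2}+\tfrac1{x_3}=1+\tfrac1m$, so \eqref{genus2} forces $g=m$ and therefore $n=2g$ in every case.

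I would then enumerate the admissible $y_1$. Since $f$ is orientation reversing, lemma~\ref{t_i} (through the realizability criterion for pre-data) requires $t_1=\gcd\{\eta_1,m\}=m/y_1$ to be odd. For $(2,2,\ell)$ one has $y_1=\ell$, so $t_1$ odd discards $\ell$ odd (which forces $t_1=2$) and leaves $\ell$ even with $m=\ell=2k$, recovering pre-data~$1$; for $(2,3,4)$ and $(2,3,5)$ the sum relation makes $\eta_1$ a unit, giving $y_1=m$ and pre-data~$4$ and~$5$; for $(2,3,3)$ the two odd divisors $t_1\in\{1,3\}$ give $y_1\in\{6,2\}$, recovering pre-data~$2$ and~$3$. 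Within each triple the residual freedom in the units $u_i$ is precisely a Galois twist, so every admissible datum is obtained from the distinguished representative $\varphi$ listed above by raising to a power coprime to $m$; thus the pre-data of $f$ is the data of $\varphi^r$, and lemma~\ref{power2} concludes that $f$ is conjugate to a power of the corresponding orientation reversing map.

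The main obstacle I anticipate is this last step: verifying that the remaining choice of rotation numbers $u_i$ yields not genuinely new actions but only powers of the distinguished $\varphi$. This is the rigidity of cyclic actions on the spherical triangle orbifolds $S^2(x_1,x_2,x_3)$, which I would establish by matching the units of $Z_m$ against the list of admissible $(\theta_1,\theta_2,\theta_3)$ of the prescribed orders and checking that $w\mapsto\varphi^{w}$ is a bijection onto them, with the few coincidences (for instance the two order-$3$ entries in the $(2,3,3)$ case) absorbed by relabeling branch points of equal order.
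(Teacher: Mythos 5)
Your proposal is correct and takes essentially the same route as the paper: lemma \ref{b+c} forces $h=0$, $b=3$, $c=1$; formula \eqref{genus2} gives the spherical inequality $\sum_i 1/x_i>1$; the arithmetic conditions of theorem \ref{Nielsen12} pin down $m=\lcm\{x_1,x_2,x_3\}$ (the paper quotes corollary \ref{m} where you rerun its prime-by-prime proof) while the oddness of $t_1$ eliminates $(2,2,\ell)$ with $\ell$ odd; and the final identification up to powers rests on lemma \ref{power2} together with unit normalization (the paper normalizes a power so that $\omega_{(f^2)^r}(\sigma_1)=1$ and deduces the remaining entries, you instead check that the unit orbit of the distinguished data exhausts all admissible data --- the same computation read in the two directions, with yours handling the non-unit case $\eta_1=3$ of pre-data 3 more uniformly). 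The only flaw is one of ordering: your claim that $\sum_i 1/x_i=1+1/m$ holds for all four triples is false for $(2,2,\ell)$ with $\ell$ odd (there $m=2\ell$ and the sum is $1+2/m$, giving $n=2g+2$), so $g=m$ and $n=2g$ may be asserted only after that case has been discarded by the parity argument, which your following paragraph does supply.
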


\begin{proof}
By lemma \ref{b+c}, we have $h=0$, $c=1$, $b=3$. By the formula \eqref{genus2} we have $1>\frac{g-1}{m}=2-\Sigma_{i=1}^3\frac{1}{x_i}$. Then we have $\Sigma_{i=1}^3\frac{1}{x_i}>1$. The solution of this inequality corresponding exactly to the triangle groups on 2-sphere. Suppose $x_1 \leq x_2 \leq x_3$, we have only the following possibilities:

(1)$x_1=2$, $x_2=2$, $x_3\geq 2$, $\frac{g-1}{m}=\frac{x_3-1}{x_3}$. By $g-1=2\tau-2+\gcd\{m,\eta_1\}$, we know that $g-1$ is odd. From $x_3-1|g-1$, we know that $x_3=2k$ for some integer $k\geq 1$. By corollary \ref{m}, $m=2k$. Since $\frac{m}{\eta_1}$ is odd and $\lcm\{x_1,x_2,y_1\}=m$, we have $x_1=2k$. Then $g=2k$, $\tau=k$. Notice that $\pi_{f^2}^{-1}(\sigma_1)$ is invariant under the action of $f$, we can find a proper power of $f^2_+$, suppose it to be $(f^2_+)^r$, such that $\omega_{(f^2)^r}(\sigma_1)=1$. From $\gcd\{\omega_{(f^2)^r}(S_{q_i}),m\}=\frac{m}{x_i}$ and $m|\Sigma_{i=1}^3\omega_{(f^2)^r}(S_{q_i})+\omega_{(f^2)^r}(\sigma_1)$ we can easily deduce that $(f^2)^r$ has pre-data $[0,2k;(k,k,2k-1);(1)]$.

(2)$x_1=2$, $x_2=3$, $x_2=3$, $\frac{g-1}{m}=\frac{5}{6}$. By corollary \ref{m}, we have $m=6$, $y_1=2,6$, $g=6$. By $g-1=2\tau-2+\frac{m}{y_1}$ we have $\tau=2$ when $y_1=2$; $\tau=3$ when $y_1=6$. The rest of the calculation is omitted.

(3)$x_1=2$, $x_2=3$, $x_3=4$, $\frac{g-1}{m}=\frac{11}{12}$. By corollary \ref{m}, we have $m=12$, $y_1=12$. Then $g=12$, $\tau=6$. The rest of the calculation is omitted.

(4)$x_1=2$, $x_2=3$, $x_3=5$, $\frac{g-1}{m}=\frac{29}{30}$. By corollary \ref{m}, we have $m=30$, $y_1=30$. Then $g=30$, $\tau=15$. The rest of the calculation is omitted.
\end{proof}
Now we introduce a result of Hirose~\cite{Hirose}. (In his article, the condition for the following lemma is $\tau\geq 3$ and $m\geq 3\tau$, but it's well-know that results for the situation $\tau<3$ are also contained in his conclusion as long as $h=0$ and the branch points number is $3$. In fact the proof in his article also applied for this general case.)
\begin{lemma}\label{Hirose}
If $m\geq 3\tau$, $h=0$ and the branch points number is $3$, then the genus $\tau$, the period $m$ and the multiplicity $Q=(q_1,q_2,q_3)$ of the branch points of periodic map $\varphi\colon\thinspace\Sigma_{\tau}\rightarrow\Sigma_{h}$ with $q_1\leq q_2\leq q_3$ should be one of the following. And for each case, $\varphi$ must be conjugate to the power of periodic map with total valency $V=(m,u_1/q_1+u_2/q_2+u_3/q_3)$, here $U=(u_1,u_2,u_3)$ are also listed:
 \begin{tabbing}
0.11\=$\tau$ is arbitrary,1111\= $m=4\tau+2$,11\= $Q=(2,2\tau+1,4\tau+2)$,11\= $U=(1,\tau,1)$\kill
1.  \>$\tau$ is arbitrary,    \> $m=4\tau+2$,  \> $Q=(2,2\tau+1,4\tau+2)$,  \> $U=(1,\tau,1)$\\
2.  \>$\tau$ is arbitrary,    \> $m=4\tau$,    \> $Q=(2,4\tau,4\tau)$,      \> $U=(1,2\tau-1,1)$\\
3.  \>$\tau=3k$,              \> $m=3\tau+3$,  \> $Q=(3,\tau+1,3\tau+3)$,   \> $U=(2,k,1)$\\
4.  \>$\tau=3k+1$,            \> $m=3\tau+3$,  \> $Q=(3,\tau+1,3\tau+3)$,   \> $U=(1,2k+1,1)$\\
5.  \>$\tau=3k\,or\,3k+1$,    \> $m=3\tau$,    \> $Q=(3,3\tau,3\tau)$,      \> $U=(1,2\tau-1,1)$\\
6.  \>$\tau=3k+2$,            \> $m=3\tau$,    \> $Q=(3,3\tau,3\tau)$,      \> $U=(2,\tau-1,1)$\\
7.  \>$\tau=4$,               \> $m=12$,     \> $Q=(4,6,12)$,         \> $U=(3,1,1)$\\
8.  \>$\tau=6$,               \> $m=20$,     \> $Q=(4,5,20)$,         \> $U=(3,1,1)$\\
9.  \>$\tau=9$,               \> $m=28$,     \> $Q=(4,7,28)$,         \> $U=(1,5,1)$\\
10. \>$\tau=12$,              \> $m=36$,     \> $Q=(4,9,36)$,         \> $U=(3,2,1)$\\
11. \>$\tau=10$,              \> $m=30$,     \> $Q=(5,6,30)$,         \> $U=(4,1,1)$
 \end{tabbing}
\end{lemma}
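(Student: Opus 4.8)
The plan is to turn the statement into a finite Diophantine enumeration driven by the Riemann--Hurwitz relation and the arithmetic of the sphere quotient, and then to read off the normalizing total valency from Nielsen's classification. Throughout, $\varphi$ denotes an orientation preserving periodic map of period $m$ on $\Sigma_\tau$ whose quotient is a sphere ($h=0$) with exactly three branch points of multiplicities $q_1\leq q_2\leq q_3$.

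First I would record the two structural constraints available to us. Putting $c=0$, $b=3$ and $\gcd\{\theta_i,m\}=m/q_i$ (lemma \ref{index}) into condition (4) of theorem \ref{Nielsen12} gives
\[
2\tau-2=m\left(1-\frac{1}{q_1}-\frac{1}{q_2}-\frac{1}{q_3}\right),
\]
so that $\frac{1}{q_1}+\frac{1}{q_2}+\frac{1}{q_3}<1$ whenever $\tau\geq 2$, and the left side is always a positive even integer. Meanwhile corollary \ref{m} forces the pairwise relations $\lcm\{q_1,q_2\}=\lcm\{q_1,q_3\}=\lcm\{q_2,q_3\}=m$.

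Next I would inject the large-period hypothesis. Eliminating $\tau$ between $m\geq 3\tau$ and the displayed relation yields $\frac{1}{q_1}+\frac{1}{q_2}+\frac{1}{q_3}\geq \frac{1}{3}+\frac{2}{m}$; in particular $\sum 1/q_i>\frac{1}{3}$, so $3/q_1>\frac{1}{3}$ and hence $q_1\leq 8$. Thus the problem is finite, and I would run a case analysis on $q_1$. For $q_1=2$ the lcm relations force $q_3=m$ together with either $q_2=m$ (even) or $2q_2=m$ ($q_2$ odd); feeding this back into Riemann--Hurwitz produces exactly the families $(2,2\tau+1,4\tau+2)$ and $(2,4\tau,4\tau)$. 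For $q_1=3$ the relation $\lcm\{3,q_2\}=m$ together with the parity of $2\tau-2$ and the realizability condition (3) of theorem \ref{Nielsen12} selects the residue of $\tau$ modulo $3$ and yields the $m=3\tau+3$ and $m=3\tau$ families (rows 3--6). For $q_1\in\{4,5\}$ the inequality leaves only finitely many $m$, giving the sporadic rows with $\tau\in\{4,6,9,10,12\}$; the values $q_1\in\{6,7,8\}$ are excluded because no triple then satisfies the lcm relations while keeping $m\geq 3\tau$ and $2\tau-2$ a positive even integer.

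Finally, for the total valency assertion I would fix $(m,Q)$ and appeal to Nielsen's theorem \ref{Nielsen11}: the conjugacy class of $\varphi$ is determined by its valency data $(\theta_1,\theta_2,\theta_3)$, equivalently by $(u_1,u_2,u_3)$ with $u_i=\theta_i q_i/m$ and $\gcd\{u_i,q_i\}=1$. Replacing $\varphi$ by $\varphi^j$ with $\gcd\{j,m\}=1$ multiplies every $\theta_i$ by $j$, hence acts on $(u_1,u_2,u_3)$ by simultaneous reduction $u_i\mapsto ju_i$. The integrality $\sum u_i/q_i\in\mathbb{Z}$ (condition (2)) and the primitivity $\gcd\{\theta_1,\theta_2,\theta_3\}\equiv 1\bmod m$ (condition (3)), read against the pairwise coprimality forced by the lcm relations, pin the triple down to a single $(\mathbb{Z}/m)^\times$-orbit; a CRT normalization on the essentially coprime factors of $m$ then produces the listed representative $U$. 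I expect the main obstacle to be the exhaustiveness of the enumeration---confirming that the simultaneous constraints (Riemann--Hurwitz, the three lcm relations, parity, and $m\geq 3\tau$) admit no solutions beyond the eleven rows, in particular ruling out $q_1\geq 6$ and any further sporadic triples---together with the rigidity argument showing that all admissible $U$ form a single power-orbit.
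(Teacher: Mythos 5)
The paper does not actually prove this lemma: it is imported wholesale from Hirose~\cite{Hirose}, with only a parenthetical remark that Hirose's hypothesis $\tau\geq 3$ may be dropped. Your proposal is therefore a genuinely different route almost by definition---a self-contained derivation, from the paper's own toolkit (theorem \ref{Nielsen12}, corollary \ref{m}, lemma \ref{index}, theorem \ref{Nielsen11}), of a statement the paper merely cites. The skeleton is correct, and I checked that it reproduces exactly the eleven rows: the inequality $\sum 1/q_i\geq\frac{1}{3}+\frac{2}{m}$ is right; for $q_1=2$ and $q_1=3$ the $\lcm$ relations yield precisely the four infinite families (one detail: the mod-$3$ splitting in rows 3--6 comes from the $\lcm$ constraint $9\nmid m$ and from which $\theta$'s can satisfy condition (2) together with the required gcd's, not from parity or from condition (3), which is automatic here because some $\theta_i$ is a unit); for $q_1=4$ the surviving subcases are $q_2\in\{m/2,\,m/4\}$, $q_3=m$, giving $\tau\in\{4\}$ and $\tau\in\{3,6,9,12\}$ respectively (the $\tau=3$ triple $(3,4,12)$ is row 3 again); for $q_1=5$ only $(5,6,30)$ survives; and $q_1\in\{6,7,8\}$ dies by parity, the $\lcm$ relations, or $m\geq 3\tau$, exactly as you assert. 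A side benefit of your route: it nowhere uses $\tau\geq 3$, so it would substantiate the paper's unproved claim that the restriction to $\tau\geq3$ is unnecessary.

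Two cautions about the final rigidity step, which you rightly single out as the main obstacle. Cosmetically, passing from $\varphi$ to $\varphi^j$ multiplies the $\theta_i$ by $j^{-1}$, not $j$ (harmless, since inversion permutes the units). Substantively, the single-orbit property is \emph{not} a formal consequence of conditions (2)--(3) plus coprimality and a CRT normalization; it must be checked row by row after normalizing the unit entry $\theta_3=1$ (possible because $q_3=m$ in every surviving row). In most rows exactly one normalized datum passes the gcd constraints and you are done; but for $Q=(3,3\tau,3\tau)$ with $\tau\equiv 0\pmod 3$ there are two admissible normalized data, $(\tau,2\tau-1,1)$ and $(2\tau,\tau-1,1)$, and no unit multiplication alone identifies them: one must multiply by the unit $2\tau-1$ \emph{and} transpose the two branch points of equal multiplicity $3\tau$. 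That transposition is legitimate---theorem \ref{Nielsen11} only sees the multiset of valencies---but it is invisible to the purely arithmetic normalization you describe, and without it your argument would wrongly predict two power-classes in that case. Once the permutation symmetry on branch points of equal multiplicity is built into the rigidity check, every row reduces to a finite verification and your plan closes.
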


\begin{theorem}\label{b+c=3}
If $n\geq 3g$, $b+c=3$ and $y_j>1$, for each $j=1,2,\cdots,c$, then $f$ is conjugate to the power of some map with the following pre-data:
\begin{tabbing}
0.11\=$g=2k$,111111\= $n=4g+4$,11\= pre-data $[0,4k+2;(2k+1,2k);(1)]$\kill
1.  \>$g=2k$,      \> $n=4g+4$,  \> pre-data $[0,4k+2;(2k+1,2k);(1)]$\\
2.  \>$g=2k$,      \> $n=4g$,    \> pre-data $[0,4k;(2k,2k-1);(1)]$\\
3.  \>$g=2k+1$,    \> $n=4g-4$,  \> pre-data $[0,4k;(2k);(2k-1,1)]$, ($k>1$)\\
4.  \>$g=6k$,      \> $n=3g+6$,  \> pre-data $[0,9k+3;(6k+2,3k);(1)]$\\
5.  \>$g=6k$,      \> $n=3g$,    \> pre-data $[0,9k;(3k,6k-1);(1)]$\\
6.  \>$g=6k+2$,    \> $n=3g$,    \> pre-data $[0,9k+3;(6k+2,1);(3k)]$\\
7.  \>$g=6k+2$,    \> $n=3g+6$,  \> pre-data $[0,9k+6;(3k+2,6k+3);(1)]$\\
8.  \>$g=6k+2$,    \> $n=3g$,    \> pre-data $[0,9k+3;(3k+1,6k+1);(1)]$\\
9.  \>$g=6k+4$,    \> $n=3g$,    \> pre-data $[0,9k+6;(3k+2,1);(6k+3)]$\\
10. \>$g=6k+4$,    \> $n=3g$,    \> pre-data $[0,9k+6;(6k+4,3k+1);(1)]$\\
11. \>$g=1$,       \> $n=3g+1$,  \> pre-data $[0,4;(2);(1,1)]$\\
12  \>$g=4$,       \> $n=3g$,    \> pre-data $[0,6;(1,2);(3)]$\\
13. \>$g=8$,       \> $n=3g$,    \> pre-data $[0,12;(9,2);(1)]$\\
14. \>$g=12$,      \> $n=3g+4$,  \> pre-data $[0,20;(15,4);(1)]$\\
15. \>$g=18$,      \> $n=3g+2$,  \> pre-data $[0,28;(7,20);(1)]$\\
16. \>$g=24$,      \> $n=3g$,    \> pre-data $[0,36;(27,8);(1)]$\\
17. \>$g=20$,      \> $n=3g$,    \> pre-data $[0,30;(24,5);(1)]$\\
 \end{tabbing}
\end{theorem}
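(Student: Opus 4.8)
The plan is to reduce the bordered orientation preserving map recorded by the pre-data to a \emph{closed} one, invoke Hirose's classification (Lemma \ref{Hirose}), and then read off which closed maps can carry the odd orbit-length boundary circles demanded by Lemma \ref{t_i}. By Lemma \ref{b+c} the hypothesis already forces $h=0$ together with $(b,c)=(2,1)$ or $(b,c)=(1,2)$, so in either case $b+c=3$. The pre-data is by definition the data of the orientation preserving periodic map $\varphi=f^2_+\colon\thinspace\Sigma_{\tau,e}^+\rightarrow\Sigma_{0,c}$ of period $m$, whose base $\Sigma_{0,c}$ is a sphere with $c$ holes and which has $b$ interior branch points. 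First I would cap off each of the $c$ boundary circles of the base by a disc, extending $\varphi$ over each cap by the rotation model of Lemma \ref{type}; this produces a closed orientation preserving periodic map $\hat\varphi\colon\thinspace\Sigma_{\tau}\rightarrow\Sigma_0$ of the same period $m$ with exactly $b+c=3$ branch points, the genus being unchanged since capping the $e$ boundary circles of the cover raises $\chi$ by $e$ while $\chi(\Sigma_{\tau,e}^+)=2-2\tau-e$.

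Next I would check that $m\geq 3\tau$, so that Lemma \ref{Hirose} applies to $\hat\varphi$. From formula \eqref{genus} and $\gcd\{\eta_j,m\}\geq 1$ one gets
\begin{equation*}
g-1=2\tau-2+\sum_{j=1}^{c}\gcd\{\eta_j,m\}\geq 2\tau-2+c,
\end{equation*}
so $g\geq 2\tau$ using $c\geq 1$; combined with $n\geq 3g$, i.e. $m\geq 3g/2$, this yields $m\geq 3\tau$. Lemma \ref{Hirose} then says that $\hat\varphi$ is conjugate to a power of one of the eleven standard closed maps, each specified by its genus $\tau$, period $m$, branch multiplicities $Q=(q_1,q_2,q_3)$ and valencies $U=(u_1,u_2,u_3)$.

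The heart of the proof is a case analysis over these eleven possibilities. For each, the $c$ branch points that arise from capping are exactly the $\sigma_j$, and by Lemma \ref{t_i} their orbit lengths $t_j=m/q_j$ must be odd (while the genuine branch points are unconstrained); moreover $y_j=q_j>1$ for every $j$ by the standing hypothesis. For each admissible way of designating $c$ of the three multiplicities as boundary circles with $m/q_j$ odd, I would recover the entries $\theta_i$ and $\eta_j$ from $Q$ and $U$ via Lemma \ref{index}, compute the genus through $g-1=2\tau-2+\sum_{j}m/q_j$, and discard any choice violating $n=2m\geq 3g$. The surviving assignments are to be matched against the seventeen listed pre-data: some Hirose families contribute more than one type according to which branch point is capped (e.g. case~1 yields type~1 when the fully ramified point is capped), while others contribute none because every admissible orbit length is even or because the period inequality fails.

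Finally, since $\varphi$ is the restriction of $\hat\varphi$ to the complement of the capping discs, it is a power of the corresponding standard bordered map, and the pre-data of $f$ is exactly the data of this $\varphi$. Lemma \ref{power2} then converts this into the assertion that $f$ itself is a power of the orientation reversing map carrying the listed pre-data, completing the argument. I expect the main obstacle to be the bookkeeping of the third paragraph: enumerating, for each of Hirose's eleven families, the admissible odd-orbit-length designations, carrying out the attendant $\gcd$ and genus computations, and verifying in each branch that the resulting period satisfies $n\geq 3g$, so that no spurious type is admitted and none of the seventeen is missed.
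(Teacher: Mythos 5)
Your proposal is correct and follows essentially the same route as the paper's proof: derive $m\geq 3\tau$ from formula \eqref{genus} together with $n\geq 3g$, invoke Lemma \ref{Hirose}, enumerate which of the three branch points may be designated as boundary circles subject to the odd-orbit-length constraint of Lemma \ref{t_i} (with $0<\gcd\{m,\theta_i\}<m$ at interior branch points), compute the genus and discard cases violating $n\geq 3g$, and convert back to $f$ via Lemma \ref{power2}. The only difference is presentational: you make the capping construction and the appeal to Lemma \ref{b+c} explicit where the paper leaves them implicit, and the bookkeeping you defer is exactly the eleven-case enumeration the paper carries out.
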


\begin{proof}
By formula \eqref{genus}, we have $g=2\tau-1+\Sigma_{i=1}^{c}\gcd\{m,\eta_i\}$.
From $\frac{2m}{3}\geq g$ we have
\begin{equation}\label{e}
\Sigma_{i=1}^{c}\gcd\{m,\eta_i\}\leq 2\left(\frac{m}{3}-\tau\right)+1
\end{equation}

Then we know that $m\geq 3\tau$. Following the lemma \ref{Hirose}, for each case we can compute $\gcd\{m,-\}=(m/q_1,m/q_2,m/q_3)$ and $\omega_{f^2}(-)=(u_1m/q_1,u_2m/q_2,u_3m/q_3)$.

Remembering that (i)$\gcd\{m,\eta_j\}$ is odd; (ii)$0<\gcd\{m,\theta_i\}<m$, then all the possibilities satisfying formula \eqref{e} are:
\begin{enumerate}
 \item $\tau$ is arbitrary, $m=4\tau+2$, $\gcd\{m,-\}=(2\tau+1,2,1)$, $\omega_{f^2}(-)=(2\tau+1,2\tau,1)$
  \begin{enumerate}
    \item $b=2$, $\gcd\{m,\theta_1\}=2\tau+1$, $\gcd\{m,\theta_2\}=2$, $\gcd\{m,\eta_1\}=1$; $g=2\tau$, $n=4g+4$, pre-data $[0,4\tau+2;(2\tau+1,2\tau);(1)]$
    \item $b=2$, $\gcd\{m,\theta_1\}=1$, $\gcd\{m,\theta_2\}=2$, $\gcd\{m,\eta_1\}=2\tau+1$; $g=4$, $n=3g$, pre-data $[0,6;(1,2);(3)]$
    \item $b=1$, $\gcd\{m,\theta_1\}=2$, $\gcd\{m,\eta_1\}=1$, $\gcd\{m,\eta_2\}=2\tau+1$; $g=1$, $n=3g+1$, pre-data $[0,4;(2);(1,1)]$
  \end{enumerate}
 \item $\tau$ is arbitrary, $m=4\tau$; $\gcd\{m,-\}=(2\tau,1,1)$, $\omega_{f^2}(-)=(2\tau,2\tau-1,1)$
  \begin{enumerate}
    \item $b=2$, $\gcd\{m,\theta_1\}=2\tau$, $\gcd\{m,\theta_2\}=1$, $\gcd\{m,\eta_1\}=1$; $g=2\tau$, $n=4g$, pre-data $[0,4\tau;(2\tau,2\tau-1);(1)]$ (or $[0,4\tau;(2\tau,1);(2\tau-1)]$ representing a orientation preserving periodic map $\varphi^{2\tau-1}$, where $\varphi$ has data $[0,4\tau;(2\tau,2\tau-1);(1)]$.)
    \item $b=1$, $\gcd\{m,\theta_1\}=2\tau$, $\gcd\{m,\eta_1\}=1$, $\gcd\{m,\eta_2\}=1$; $\tau> 1$, $g=2\tau+1$, $n=4g-4$, pre-data $[0,4\tau;(2\tau);(2\tau-1,1)]$
  \end{enumerate}
 \item $\tau=3k$, $m=3\tau+3$; $\gcd\{m,-\}=(\tau+1,3,1)$, $\omega_{f^2}(-)=(2\tau+2,\tau,1)$
  \begin{enumerate}
    \item $b=2$, $\gcd\{m,\theta_1\}=\tau+1$, $\gcd\{m,\theta_2\}=3$, $\gcd\{m,\eta_1\}=1$; $g=2\tau$, $n=3g+6$, pre-data $[0,9k+3;(6k+2,3k);(1)]$
    \item $b=2$, $\gcd\{m,\theta_1\}=\tau+1$, $\gcd\{m,\theta_2\}=1$, $\gcd\{m,\eta_1\}=3$; $g=2\tau+2$, $n=3g$, pre-data $[0,9k+3;(6k+2,1);(3k)]$
  \end{enumerate}
 \item $\tau=3k+1$, $m=3\tau+3$; $\gcd\{m,-\}=(\tau+1,3,1)$, $\omega_{f^2}(-)=(\tau+1,2\tau+1,1)$
  \begin{enumerate}
    \item $b=2$, $\gcd\{m,\theta_1\}=\tau+1$, $\gcd\{m,\theta_2\}=3$, $\gcd\{m,\eta_1\}=1$; $g=2\tau$, $n=3g+6$, pre-data $[0,9k+6;(3k+2,6k+3);(1)]$
    \item $b=2$, $\gcd\{m,\theta_1\}=\tau+1$, $\gcd\{m,\theta_2\}=1$, $\gcd\{m,\eta_1\}=3$; $g=2\tau+2$, $n=3g$, pre-data $[0,9k+6;(3k+2,1);(6k+3)]$
  \end{enumerate}
 \item $\tau=3k\,or\,3k+1$, $m=3\tau$; $\gcd\{m,-\}=(\tau,1,1)$, $\omega_{f^2}(-)=(\tau,2\tau-1,1)$

  $b=2$, $\gcd\{m,\theta_1\}=\tau$, $\gcd\{m,\theta_2\}=1$, $\gcd\{m,\eta_1\}=1$; $g=2\tau$, $n=3g$, pre-data $[0,3\tau;(\tau,2\tau-1);(1)]$ (or $[0,3\tau;(\tau,1);(2\tau-1)]$ when $u(2\tau-1)+v(3\tau)=1$ for some integers $u,v$. In this case it's representing an orientation preserving periodic map $\varphi^u$, where $\varphi$ has data  $[0,3\tau;(\tau,2\tau-1);(1)]$)
  
 \item $\tau=3k+2$, $m=3\tau$; $\gcd\{m,-\}=(\tau,1,1)$, $\omega_{f^2}(-)=(2\tau,\tau-1,1)$,

  $b=2$, $\gcd\{m,\theta_1\}=\tau$, $\gcd\{m,\theta_2\}=1$, $\gcd\{m,\eta_1\}=1$; $g=2\tau$, $n=3g$, pre-data $[0,9k+6;(6k+4,3k+1);(1)]$ (or $[0,9k+6;(6k+4,1);(3k+1)]$ when $k$ is even. In this case it's representing an orientation preserving periodic map $\varphi^{3k+1}$, where $\varphi$ has data $[0,9k+6;(6k+4,3k+1);(1)]$)
 \item $\tau=4$, $m=12$; $\gcd\{m,-\}=(3,2,1)$, $\omega_{f^2}(-)=(9,2,1)$

  $b=2$, $\gcd\{m,\theta_1\}=3$, $\gcd\{m,\theta_2\}=2$, $\gcd\{m,\eta_1\}=1$; $g=8$, $n=3g$, pre-data $[0,12;(9,2);(1)]$
 \item $\tau=6$, $m=20$; $\gcd\{m,-\}=(5,4,1)$, $\omega_{f^2}(-)=(15,4,1)$

  $b=2$, $\gcd\{m,\theta_1\}=5$, $\gcd\{m,\theta_2\}=4$, $\gcd\{m,\eta_1\}=1$; $g=12$, $n=3g+4$, pre-data $[0,20;(15,4);(1)]$
 \item $\tau=9$, $m=28$; $\gcd\{m,-\}=(7,4,1)$, $\omega_{f^2}(-)=(7,20,1)$

  $b=2$, $\gcd\{m,\theta_1\}=7$, $\gcd\{m,\theta_2\}=4$, $\gcd\{m,\eta_1\}=1$; $g=18$, $n=3g+2$, pre-data $[0,28;(7,20);(1)]$
 \item $\tau=12$, $m=36$; $\gcd\{m,-\}=(9,4,1)$, $\omega_{f^2}(-)=(27,8,1)$

  $b=2$, $\gcd\{m,\theta_1\}=9$, $\gcd\{m,\theta_2\}=4$, $\gcd\{m,\eta_1\}=1$; $g=24$, $n=3g$, pre-data $[0,36;(27,8);(1)]$
 \item $\tau=10$, $m=30$; $\gcd\{m,-\}=(6,5,1)$, $\omega_{f^2}(-)=(24,5,1)$

  $b=2$, $\gcd\{m,\theta_1\}=6$, $\gcd\{m,\theta_2\}=5$, $\gcd\{m,\eta_1\}=1$; $g=20$, $n=3g$, pre-data $[0,30;(24,5);(1)]$

\end{enumerate}
\end{proof}

Now under the condition $n>2(g-1)$, the only unclear cases are:
\begin{enumerate}
  \item $h=0$, $b=1$, $c=2$, $x_i>1$ for $i=1,\cdots,b$, $y_j>1$ for $j=1,\cdots,c$ and $n<3g$
  \item $h=0$, $b=2$, $c=1$, $x_i>1$ for $i=1,\cdots,b$, $y_j>1$ for $j=1,\cdots,c$ and $n<3g$.
\end{enumerate}

\begin{corollary}\label{4g-4}
If $g>3$ is odd, all the possible orientation reversing periodic maps with period larger than or equal to $3g$ must have period $4g-4$ and be conjugate to the power of some orientation reversing periodic map with data $[0,8k;(4k);(--);(2k-1,1)]$.
\end{corollary}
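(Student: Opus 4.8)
The plan is to treat this statement as a direct distillation of the complete classification of large-period orientation reversing maps already assembled in Lemmas \ref{t=0}, \ref{b+c=2}, \ref{b+c=4} and Theorem \ref{b+c=3}. Since $n\geq 3g$ and $g\geq 5$ we certainly have $n>2(g-1)$, so Lemma \ref{b+c} applies and yields $h=0$ together with the short list $(b,c)\in\{(0,2),(1,2),(1,1),(2,1),(3,1)\}$, i.e. $b+c\in\{2,3,4\}$. I would then run through these possibilities and show that the single inequality $n\geq 3g$, together with $g$ odd and $g>3$ (so $g\geq 5$), eliminates everything except one configuration.

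First I would dispose of the degenerate ramification and the small values of $b+c$. If $y_j=1$ for some $j$, Lemma \ref{t=0} forces $g=m-1$ or $g=m-2$; since $n=2m$ these read $n=2g+2$ or $n=2g+4$, and both are strictly less than $3g$ once $g\geq 5$, so this case cannot occur. Hence $y_j>1$ for all $j$, and I split on $b+c$. For $b+c=2$, Lemma \ref{b+c=2} gives $g\in\{0,1\}$, contradicting $g\geq 5$. For $b+c=4$ (which by Lemma \ref{b+c} means $b=3,c=1$), Lemma \ref{b+c=4} forces $n=2g<3g$, again impossible. This leaves only $b+c=3$.

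For $b+c=3$ with $y_j>1$, Theorem \ref{b+c=3} supplies the explicit list of seventeen types, so here the work is merely to scan that list for odd genus. In every entry except the third the genus is visibly even (of the form $2k$, $6k$, $6k\pm 2$, $6k+4$, or one of the sporadic even values $4,8,12,18,20,24$), while entry $11$ has $g=1\not>3$. Only the third entry survives: $g=2k+1$ with $k>1$, that is $g\geq 5$ odd, with period $n=4g-4$ and pre-data $[0,4k;(2k);(2k-1,1)]$. Consequently $f$ is conjugate to a power of the map carrying this pre-data; and because the ``power'' relation supplied by Lemma \ref{power2} uses an exponent coprime to $n$, it preserves the period, so the period of $f$ is exactly $n=8k=4g-4$, as asserted.

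It then remains only to translate the pre-data into the data displayed in the statement. Since $m=4k$ is even, condition (2) of Theorem \ref{reconstruct} makes the $\delta$-part empty (there are no fixed circles), so $s=2$ and both $\eta$-entries of the pre-data become one-sided-loop entries; the middle slot is therefore $(--)$. By Lemma \ref{data} the resulting data entries satisfy $\omega_f(S_{q_1})=2\cdot 2k=4k$ and $\zeta_l\equiv\eta_l\pmod{m}$ with each $\zeta_l$ odd, and choosing the representatives $2k-1$ and $1$ (both odd) reproduces exactly $[0,8k;(4k);(--);(2k-1,1)]$ with $n=8k=4g-4$. I expect the only genuinely delicate point to be the bookkeeping in this last pre-data-to-data conversion — keeping the two competing meanings of ``$k$'' apart and checking that the chosen odd lifts modulo $2m$ of $\eta_1,\eta_2$ match the stated $\zeta$'s — since the substantive classification, in particular through Hirose's Lemma \ref{Hirose} feeding Theorem \ref{b+c=3}, has already been carried out.
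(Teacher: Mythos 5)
Your first two paragraphs are sound, and they are exactly the paper's (unwritten) ``easy to see'' step: the cases $y_j=1$, $b+c=2$ and $b+c=4$ are killed by Lemmas \ref{t=0}, \ref{b+c=2}, \ref{b+c=4} because they force $n\leq 2g+4<3g$ (or $g\leq 1$, or $n=2g$) once $g\geq 5$, and in the list of Theorem \ref{b+c=3} only entry 3 has odd genus greater than $3$; that entry also pins the period at $n=4g-4=8k$. The genuine gap is in your final paragraph, which is precisely where the paper's entire displayed proof lives. A map $f_0$ with pre-data $[0,4k;(2k);(2k-1,1)]$ does not come with a data that you are free to normalize: by Lemma \ref{data} the pre-data determines each $\zeta_l$ only modulo $m=4k$, whereas the data entries live in $Z_n=Z_{8k}$, so each of $\zeta_1,\zeta_2$ has two admissible lifts, $\zeta_1\in\{2k-1,6k-1\}$ and $\zeta_2\in\{1,4k+1\}$. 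All four lifts are odd (adding $m=4k$ preserves parity), so your parity check selects nothing, and ``choosing the representatives $2k-1$ and $1$'' silently assumes that the four candidate data
$D_1=[0,8k;(4k);(--);(2k-1,1)]$, $D_2=[0,8k;(4k);(--);(2k-1,4k+1)]$, $D_3=[0,8k;(4k);(--);(6k-1,1)]$, $D_4=[0,8k;(4k);(--);(6k-1,4k+1)]$
all describe conjugate maps. That is exactly what cannot be assumed: by Theorem \ref{only} and Theorem \ref{s=2} it is the data, not the pre-data, that determines the conjugacy class, and in the paper's account $D_1$ and $D_2$ are genuinely different conjugacy classes sharing this pre-data.

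The paper's proof consists of resolving this ambiguity: it lists $D_1,\dots,D_4$; shows $D_1\sim D_3$ and $D_2\sim D_4$ using (R0), (R1), (R2); and then verifies that if $f$ is the map with data $D_1$, then $f^{4k+1}$ has data $D_4$. This last computation is what justifies the ``power of'' phrasing in the statement: a map with data $D_2$ or $D_4$ is not claimed to be conjugate to the $D_1$-map, but since $\gcd(4k+1,8k)=1$ it is a (coprime) power of it, and conversely. Your argument, as written, proves the corollary only for those maps whose data happens to be $D_1$ or $D_3$, and says nothing about the maps carrying $D_2$ or $D_4$; equivalently, it never explains why the conclusion must be stated as ``conjugate to a power of'' rather than ``conjugate to'' the map with data $[0,8k;(4k);(--);(2k-1,1)]$. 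To close the gap you would need to reproduce the paper's case analysis of the four lifts, or some equivalent argument.
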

\begin{proof}
It's easy to see that the only possible case is in theorem \ref{b+c=3}: $g=2k+1$, $n=4g-4$, $b=1$, $c=2$, pre-data $[0,4k;(2k);(2k-1,1)]$.

The possible data corresponding to this pre-data are:\\
$D_1=[0,8k;(4k);(--);(2k-1,1)]$, $D_2=[0,8k;(4k);(--);(2k-1,4k+1)]$,\\
$D_3=[0,8k;(4k);(--);(6k-1,1)]$, $D_4=[0,8k;(4k);(--);(6k-1,4k+1)]$.

By the equivalent relations listed in section \ref{equivalent relation}, we have:\\
$D_1=[0,8k;(4k);(--);(2k-1,1)]\sim^{R(1)}[0,8k;(-4k);(--);(-2k+1,-1)]$\\
$\sim^{R(2)}[0,8k;(-4k);(--);(-2k-1,1)]=[0,8k;(4k);(--);(6k-1,1)]=D_3$ and\\
$D_2=[0,8k;(4k);(--);(2k-1,4k+1)]\sim^{R(1)}[0,8k;(-4k);(--);(-2k+1,-4k-1)]$\\
$\sim^{R(2)}[0,8k;(-4k);(--);(-10k-1,4k+1)]=[0,8k;(4k);(--);(6k-1,4k+1)]=D_4$.

It's easy to verify that if the map corresponding to $D_1$ is $f$, then $f^{4k+1}$ is exactly corresponding to $D_4$.
\end{proof}

\end{document}